\numberwithin{equation}{section}
\theoremstyle{plain}
\newtheorem{theorem}{Theorem}[section]
\newtheorem{proposition}{Proposition}[section]
\newtheorem{corollary}{Corollary}[section]
\newtheorem{lemma}{Lemma}[section]
\theoremstyle{remark}
\newtheorem*{theorem*}{Theorem}
\newtheorem*{question*}{Question}
\newtheorem{remark}{Remark}[section]
\newtheorem{definition}{Definition}[section]
\newenvironment{psmallmatrix}
 {\left(\begin{smallmatrix}}
 {\end{smallmatrix}\right)}
\newcommand{\overbar}[1]{\mkern 1.5mu\overline{\mkern-1.5mu#1\mkern-1.5mu}\mkern 1.5mu}
\title{O\lowercase{perators} \lowercase{in the} C\lowercase{owen}-D\lowercase{ouglas} \lowercase{class and related topics}}
\author{G\lowercase{adadhar} M\lowercase{isra}}
\email{gm@iisc.ac.in}
\address{Department of Mathematics, Indian Institute of Science, Bangalore 560 012}
\thanks{The author gratefully acknowledges the financial support from the Department 
of Science and Technology in the form of the~J~C~Bose National Fellowship and  from the University Grants Commission, Centre for Advanced Study.}
\keywords{Cowen-Douglas class, Curvature inequalities, Homogeneous operators and vector bundles, Hilbert Module, Quotient and Sub modules, Flag structure}
\begin{document}
\maketitle

\baselineskip 14pt


Linear spaces with an Euclidean metric are ubiquitous in mathematics, 
arising both from quadratic forms and inner products.  Operators on 
such spaces also occur naturally. 
In recent years, the study of multivariate operator theory has made substantial progress.  Although, the study of self adjoint operators  goes back a few decades, the non-self adjoint theory has developed at a slower pace. While several approaches to this topic has been developed, the one that has been most fruitful is clearly the study of Hilbert spaces that are modules over natural function algebras like $\mathcal A({\Omega})$, where $\Omega \subseteq \mathbb C^m$ is a bounded domain,  consisting of complex valued functions which are holomorphic on some open set $U$ containing $\overbar{\Omega}$, the closure of $\Omega$. \index{Hilbert module}
The book \cite{misra-DP} showed how to recast many of the familiar theorems of operator theory  in the language of Hilbert modules. The books \cite{misra-EP}  and  \cite{misra-CG} provide an account   of the achievements from the recent past.
The impetus for  much of what is described below comes from the interplay
of operator theory with other areas of mathematics like 
complex geometry and representation theory of locally compact 
groups.  \index{locally compact groups}

\section{Introduction} 

The first half of this expository article describe several  elementary properties of the operators in the Cowen-Douglas class. This is divided into five separate themes. In the second half of the article, we elaborate a little more on each of these themes. 
\index{Cowen-Douglas class}

\subsection{Operators in the Cowen-Douglas class} 
In the paper \cite{misra-CD}, Cowen and Douglas initiated a systematic study of a class of  bounded linear operators  on a complex separable Hilbert space possessing an open set  of eigenvalues of constant (and finite) multiplicity. Let $\Omega$ be the set of eigenvalues of the operator $T:\mathcal H \to \mathcal H$ in this class. Assuming that  $\mbox{\rm ran}(T-wI) =\mathcal H$,  using elementary Fredholm theory, they prove:
For a fixed but arbitrary $w_0\in \Omega$, there is an open neighbourhood $U$ of $w_0$ and  holomorphic functions 
$$\gamma_i:U \to \mathcal H,\,\,  (T-w)\gamma_i(w)=0,\,\, 1\leq i \leq n,$$ such that the  vectors $\{\gamma_1(w), \ldots , \gamma_n(w)\}$ are linearly independent, $w\in U$. They also show that such an operator $T$ defines a holomorphic Hermitian vector bundle $E_T$:\index{holomophic Hermitian vector bundle}
$$t: \Omega \to \mbox{\rm Gr}(n, \mathcal H),\,\, t(w) = \ker (T-w) \subseteq \mathcal H.$$  \index{holomorphic frame}
\!\! This means, for any fixed but arbitrary point $w_0\in \Omega$,  there exists a holomorphic map $\boldsymbol \gamma_T$ of the form $\boldsymbol \gamma_T(w)  = (\gamma_1(w), \ldots , \gamma_n(w))$, $(T-w)\gamma_i(w)=0$ in some open neighbourhood  $U$ of $w_0$. It is called a holomorphic frame for the operator $T$.
Finally, Cowen and Douglas also assume that the linear span of $\{\gamma_1(w), \ldots, \gamma_n(w):w\in \Omega\}$ is dense in $\mathcal H$. 
Let $B_n(\Omega)$ denote this class of operators.

\index{curvature}\index{second fundamental form}\index{line bundle}
One of the striking results of Cowen and Douglas says that 
there is a one to one correspondence between the unitary equivalence class of the operators $T$ and the (local) equivalence classes of the holomorphic Hermitian vector bundles $E_T$ determined by them. 
As a result of this correspondence set up by the Cowen-Douglas theorem, the invariants the vector bundle $E_T$  like the curvature, the second fundamental form, etc. now serve as unitary invariants for the operator $T$. Although finding a complete set of tractable invariants, not surprisingly, is much more challenging. Examples were given in \cite[Example 2.1]{misra-MSR} to show that the class of the curvature alone does not determine the class of the vector bundle except in the case of a line bundle. Before we consider this case in some detail, let us recall the interesting notion of a spanning section. A holomorphic
function $s: \Omega \to \mathcal H$  is called a spanning section for an operator $T$ in the Cowen-Douglas class if  \index{spanning holomorphic section}
$\ker (T-w) s(w) = 0$ and the closed linear span of $\{s(w):w\in \Omega\}$ is $\mathcal H$. Kehe Zhu  in \cite{misra-KZ} proved the existence of a spanning section for an operator $T$ in $B_n(\Omega)$ and showed that it can be used to characterize Cowen-Douglas operators of rank $n$ up to unitary equivalence and similarity. Unfortunately, the existential nature of the spanning section makes it difficult to apply this result in concrete examples. 

First note that the holomorphic frame ${\gamma}_T$ is \emph {not} uniquely determined even if the rank $n=1$. If $\gamma_T$ is any given holomorphic frame for the operator $T$ defined on an open set $\Omega \subseteq \mathbb C$ and $\varphi:\Omega \to \mathbb C$ is a non-vanishing holomorphic function, then $\varphi  {\gamma}_{T}$ is also a holomorphic frame for the line bundle $E_T$. Therefore, a holomorphic frame can't possibly determine the unitary equivalence class of the operator $T$. How does one get rid of this ambiguity in the holomorphic frame to obtain an invariant?  It is evident that 
\begin{equation}\label{curvature}
\mathscr K_T(w) = - \frac{\partial^2}{\partial w \partial\overbar{w}}\log {\|\gamma_T(w)\|^2},\,\, w\in \Omega_0,
\end{equation}
is the same for all holomorphic frames of the form $\varphi \,\gamma_T$, where  $\varphi$ is any non-vanishing holomorphic function on some open set $\Omega_0 \subseteq \Omega$. Since any two holomorphic frames of the operator $T$  must differ by such a holomorphic change of frame, we conclude that $\mathscr K_T$ is a unitary invariant for the operator $T$. The converse is also valid and is well-known: The curvature $\mathscr K_T$ of the line bundle $E_T$ is defined by the formula  \eqref{curvature} and is a complete invariant for the line bundle $E_T$. 

To see the usefulness of this invariant, consider the weighted unilateral shift $W_\lambda$  determined by the weight sequence $\{\sqrt{\tfrac{n+1}{n+\lambda}}\},$ $\lambda> 0$, acting on the Hilbert space $\ell^2$ of square summable sequences. Clearly, the adjoint $W_\lambda^*$ admits a holomorphic frame. For instance, one may choose $\gamma_{W_\lambda^*}(w)=(1, c_1w, c_2w^2, \ldots )$, $w\in \mathbb D$, where $c^2_k$ is the co-efficient of $x^k$ in the binomial expansion of the function $(1-x)^{-\lambda}$. 
It then follows that $\|\gamma_{W_\lambda^*}(w)\|^2 = (1-|w|^2)^{-\lambda}$ and that $\mathscr K_{W_\lambda^*}(w) =-\lambda (1-|w|^2)^{-2}$, $w\in \mathbb D$.  Consequently, using the Cowen-Douglas theorem, we conclude that none of the operators $W_\lambda$ are unitarily equivalent among themselves.

Finding similarity invariants for operators in the class $B_n(\Omega)$ has been somewhat difficult from the beginning. The conjecture made by Cowen and Douglas in \cite{misra-CD} was shown to be false \cite{misra-CM1, misra-CM2}. However, significant progress on the question of similarity has been made recently (cf. \cite{misra-CJ, misra-JJ, misra-DKS, 
misra-jw}).

After a model and a complete set of unitary invariants were provided for the operators in the class $B_n(\Omega)$ in \cite{misra-CD}, it was only natural to follow it up with the  study of a commuting tuple of operators with similar properties. This was started in \cite{misra-CD1} and followed up in the papers \cite{misra-CD2} and \cite{misra-CS}. The approach in the papers \cite{misra-CD2} and \cite{misra-CS} are quite different. We recall below the definition of the class $B_n(\Omega),$ $\Omega \subset \mathbb C^m$, from the paper \cite{misra-CS}. This definition also appears in \cite{misra-CD1} and is implicit in \cite{misra-CD2}.  

Let $\boldsymbol{T}= (T_1, \ldots , T_m)$ be an $m$-tuple of commuting bounded linear operators on a separable complex Hilbert space $\mathcal H$. For $w = (w_1, \ldots, w_m)$ in $\Omega$, let $T-w$ denote the $m$-tuple
$(T_1-w_1, \ldots ,T_m - w_m)$.  Define the operator $D_{\boldsymbol{T}}:\mathcal H\to \mathcal H\oplus  \cdots \oplus \mathcal H$ by 
$$D_{\boldsymbol T}(x) = (T_1 x, \ldots, T_m x),\,\, x\in {\mathcal H}.$$   

\begin{definition}
For a connected, bounded and open subset $\Omega$ of $\mathbb C^m$, a $m$-tuple $\boldsymbol{T}$ is said to be in the Cowen-Douglas class $B_n(\Omega)$ of rank $n$, $n\in \mathbb N$, if 
\begin{enumerate}
\item $\mbox{\rm ran}~D_{\boldsymbol{T - w}}$ is closed for all $w\in \Omega$ 
\item $\mbox{\rm span}~\{ \ker D_{\boldsymbol{T - w}}: w \in \Omega\}$ is dense in
${\mathcal H}$
\item $\dim\, \ker\, D_{\boldsymbol{T - w}}= n$  for all $w\in \Omega$.
\end{enumerate}
\end{definition}
For $m=1$, it is shown in \cite[Proposition 1.12]{misra-CD} that if  $T$ is in $B_n(\Omega),$ then there exists a choice of $n$ eigenvectors in $\ker(T-w),$ which are holomorphic as  functions of $w\in\Omega$ making 
$$E_T:=\{(w,x): w\in\Omega,\, x \in \ker (T-w)\}\subseteq \Omega\times\mathcal H,$$ 
$\pi:E_T\to \Omega$,  $\pi(w, x)= w$, is a rank $n$  holomorphic Hermitian
vector bundle over $\Omega.$ Here is one of the main results from \cite{misra-CD}.

\bigskip
{\bf Theorem} (Cowen and Douglas). \emph{The operators $T$ and $\hat{T}$ in $B_n(\Omega)$ are unitarily
equivalent if and only if the corresponding holomorphic Hermitian
vector bundles $E_T$ and $E_{\hat{T}}$ are equivalent on some open subset $\Omega_0$ of~$\Omega$.}

The existence of the vector bundle $E_{\boldsymbol T}$ follows from 
\cite[Theorem 2]{misra-CS}, while \cite[Theorem 3.7]{misra-CS} provides the analogue of the Cowen-Douglas Theorem  for an arbitrary $m$. Finally, a complete set of invariants in this case are given in \cite{misra-CD2}. 

Crucial in any study of such a class is the problem of finding a canonical model and a set of invariants. For normal operators, the spectral theorem provides a model in the form of a multiplication operator and a complete set of invariants is given by the spectrum, the spectral measure and the multiplicity function. 
Similarly, the Sz.-Nagy -- Foias theory provides a model for a pure completely nonunitary contraction and the characteristic function serves as a complete invariant. Now, we describe a model for the operators, resp. commuting tuples, 
in the Cowen-Douglas class. \index{spectral measure}\index{multiplicity function}
\index{Sz.-Nagy-Foias theory}\index{commuting tuples}

Let $V$ be a $n$-dimensional Hilbert space and ${\mathcal L}(V)$ denote
the vector space of all linear transformations on $V$.  A function
$K:\Omega\times\Omega \to {\mathcal L}(V)$, satisfying
\begin{equation} \label{existence reprod}
\sum_{i,j=1}^N \langle {K(w_i,w_j)\zeta_j}, {\zeta_i}\rangle_V ~\geq~0 ,~~w_1,
\ldots,w_N\in \Omega, ~~\zeta_1,\ldots,\zeta_N \in V, N\geq 1
\end{equation}\index{non negative definite kernel}\index{nnd kernel}
is said to be a {\em non negative definite (nnd) kernel} on $\Omega$.
Given such an nnd kernel $K$ on $\Omega$, it is easy to construct a Hilbert space
${\mathcal H}$ of functions on $\Omega$ taking values in $V$ with the
property
\begin{equation} \label{reproducing property} 
\langle{f(w)},{\zeta}\rangle_V = \langle{f},{K(\cdot,w)\zeta}\rangle_{\mathcal H},~ w\in
\Omega,~\zeta\in V,~f\in {\mathcal H}.
\end{equation}
The Hilbert space ${\mathcal H}$ is simply the completion of the linear span of all
vectors of the form $K(\cdot,w)\zeta$, $w\in \Omega$, $\zeta\in
V$, with inner product defined by (\ref{reproducing property}).

Conversely, let ${\mathcal H}$ be any Hilbert space of functions on
$\Omega$ taking values in $V$.  Let $e_w : {\mathcal H} \to V$ be
the evaluation functional defined by $e_w(f) = f(w)$, $w\in
\Omega$, $f\in {\mathcal H}$.  If $e_w$ is bounded for each $w
\in \Omega$ then it is easy to verify that the Hilbert space ${\mathcal H}$
possesses a reproducing kernel $K(z,w)= e_z e_w^*$, that is,
$K(\cdot ,w)\zeta \in {\mathcal H}$ for each $w\in \Omega$ and $K$
has the reproducing property (\ref{reproducing property}). Finally, 
the reproducing property (\ref{reproducing property}) determines the kernel $K$ uniquely.  We let $(\mathcal H,K)$ be the Hilbert space $\mathcal H$ equipped with the reproducing kernel $K$. \index{reproducing kernel}
\begin{remark} \label{bdder}  Let $ K :\Omega\times \Omega\to \mathcal M_k(\mathbb C)$ be a non-negative definite kernel. For every $\boldsymbol{i}\in \mathbb Z_{+}^m$, $\eta \in \mathbb{C}^k $ and $ w\in\Omega,$ we have  
\begin{itemize} 
\item[\rm (i)] ${\bar{\partial}}^{\boldsymbol{i}}K(\cdot,w)\eta$ is in $(\mathcal H, K),$ 
\item[\rm (ii)]$
\left \langle f,\bar{\partial}^{\boldsymbol{i}} K(\cdot,w)\eta \right \rangle =\left \langle (\partial^{\boldsymbol i}f)(w),\eta\right \rangle_{\mathbb C^k}, 
f\in (\mathcal H, K).$
\end{itemize}
The proof follows from the uniform boundedness principle 
\cite[Proposition 2.1.3]{misra-SG}.
\end{remark}\index{uniform boundedness principle}

Given any $m$-tuple of operators $\boldsymbol{T}$ in $B_n(\Omega)$,  there exists an open subset $U$ of $\Omega$ and  $n$ linearly
independent vectors $\gamma_1(w), \ldots, \gamma_n(w)$ 
in $\ker D_{\boldsymbol{T-w}},$ $w \in U$, such that each of the maps $w \mapsto \gamma_i(w)$ is holomorphic on $U$, see 
\cite[Proposition 1.11]{misra-CD} and \cite[Theorem 2.2]{misra-CS}. Define 
$\hat{\Gamma}: U \to {\mathcal L}(\mathbb{C}^n,{\mathcal H})$ by setting 
$$\hat{\Gamma}(w)\zeta = \sum_{i=1}^n \zeta_i\gamma_i(w),\,\, \zeta=
(\zeta_1, \ldots, \zeta_n)\in \mathbb{C}^n.$$  

Let ${\mathcal O}(U, \mathbb{C}^n)$ denote the
linear space of holomorphic functions on $U$ taking values in $\mathbb C^n$.
Set $U^* : = \{w: \overbar{w} \in U\}$. Define $\Gamma: {\mathcal H}\to {\mathcal O}(U^*, \mathbb{C}^n)$ by
\begin{equation} \label{general construction}
(\Gamma x)({w}) = \hat{\Gamma}(\overbar{w})^* x, ~~x\in {\mathcal H},~w\in U^*.
\end{equation}
Define a sesqui-linear form on ${\mathcal H}_\Gamma = \mbox{ran}~\Gamma$ by 
$\langle {\Gamma} f, \Gamma g\rangle_{\Gamma} = \langle{f},{g}\rangle$, $f,g\in{\mathcal H}$.
The map $\Gamma$ is linear and injective.  Hence ${\mathcal
H}_\Gamma$ is a Hilbert space of $\mathbb{C}^n$-valued holomorphic functions on $U^*$ 
with inner product $\langle {\cdot},{\cdot}\rangle_{{\Gamma}}$ and $\Gamma$ is unitary. 
Then it is easy to verify the following (cf. \cite[pp. 194 ]{misra-CD} and 
\cite[Remarks 2.6]{misra-CS}).
\begin{description}
\item[a)] $K(z,w) = \hat{\Gamma}(\overbar{z})^* \hat{\Gamma} (\overbar{w})$,
$z,w \in U^*$ is the reproducing kernel  for the Hilbert space ${\mathcal H}_{\Gamma}$.
\item[b)] $M_i^* \Gamma = \Gamma T_i$, where $(M_i f)(z) = z_i f(z)$,
$z\in U^*$.
\end{description}
Thus any  commuting $m$-tuple $\boldsymbol T$ of operators in the class $B_n(\Omega)$ may be realized as the adjoint of the $m$-tuple $\boldsymbol M:=(M_1, \ldots ,M_m)$ of multiplication by the coordinate functions on some Hilbert space $\mathcal H$ of holomorphic functions defined on $U^*$ possessing a reproducing kernel $K$. In this representation, clearly, $\Gamma( \gamma_i(w) ) = K(\cdot, \overbar{w}) \varepsilon_i$, $1 \leq i \leq n$ is a holomorphic frame.

We give this correspondence for commuting tuple of operators in $B_1(\Omega)$ adding that except for slight increase in the notational complexity, the same proof works in general. 


Let $\gamma$ be a non-zero
holomorphic section defined on some open subset $U$ of $\Omega$ for the operator $T$ acting on the Hilbert
space $\mathcal H$. Consider the map $\Gamma:\mathcal H \rightarrow
\mathcal O(U^*)$
defined by $\Gamma(x)(z) =
\langle x,\gamma(\bar z) \rangle,\, z\in U^*$. Transplant the
inner product from $\mathcal H$ on the range of $\Gamma$.  The map
$\Gamma$  is now  unitary  from $\mathcal H$ onto ${\rm ran}\,\Gamma$. Define $K$ to be the function $K(z,w) =
\Gamma\big (\gamma(\bar{w})\big )(z) = \langle \gamma(\bar{w}) ,
\gamma(\bar{z}) \rangle$, $z,w\in U^*$. Set $K_w(\cdot):=
K(\cdot,w)$. Thus $K_w$ is the function $\Gamma \big
(\gamma(\bar{w})\big )$. It is then easily verified that $K$ has the
reproducing property, that is,
\begin{eqnarray*}
\langle \Gamma(x)(z), K(z,w) \rangle_{{\rm ran}\,\Gamma} &=&
\langle \big (\langle x, \gamma(\bar{z}) \rangle \big ) ,\big (\langle \gamma(\bar{w}) ,
\gamma(\bar{z}) \big ) \rangle_{{\rm ran}\, \Gamma}\\
&=& \langle \Gamma x, \Gamma(\gamma(\bar{w})) \rangle_{{\rm ran}\,
\Gamma} = \langle x , \gamma(\bar{w}) \rangle_{\mathcal H} \\&=&
\Gamma(x)(w),\,\, x\in \mathcal H,\,\, w\in U^*.
\end{eqnarray*}
It follows that $\|K_w(\cdot)\|^2 = K(w,w)$, $w \in U^*$. Also,
$K_w(\cdot)$ is an eigenvector for the operator $\Gamma \,T_i\,\Gamma^*$, $1\leq i \leq m$, 
with eigenvalue $\overbar{w}_i$:
\begin{eqnarray*}
\Gamma \,T_i\, \Gamma^*( K_w(\cdot)) &=& \Gamma\, T_i\, \Gamma^* \big (
\Gamma(\gamma(\bar{w}))\big )\\
&=&\Gamma\, T_i \,\gamma(\bar{w})\\ &=& \Gamma\, \overbar{w}_i\, \gamma(\overbar{w})\\
&=& \overbar{w}_i\, K_w(\cdot), \,\, w\in U^*.
\end{eqnarray*}
Since the  linear span of the vectors $\{K_w :
w\in U^*\}$ is dense in $(\mathcal H, K)$  (see \cite[Corollary 1.13]{misra-CD}), it follows that $\Gamma \,T_i\,
\Gamma^*$ is the adjoint $M_i^*$ of the multiplication operator $M_i$ acting on $(\mathcal H, K)$. We therefore assume, without loss
of generality, that an operator $\boldsymbol T$ in $B_1(\Omega)$ has been realized as the
adjoint $\boldsymbol M^*$ of the multiplication operator $\boldsymbol M$ on some Hilbert space $(\mathcal H, K)$ of holomorphic functions on $U^*$ possessing a reproducing kernel $K$.

Moreover, starting from any nnd kernel $K$ defined on $\Omega$ taking values in $\mathcal M_n(\mathbb C)$ and fixing a $w_0$ in $\Omega$, we note that the function
$$ K_0(z,w) = K(w_0,w_0)^{\frac{1}{2}}\varphi(z)^{-1}K(z,w)\overbar{\varphi(w)^{-1}}
K(w_0,w_0)^{\frac{1}{2}}
$$
is defined on some open neighbourhood $U$ of $w_0$ on which $\varphi(z) = K(z,w_0)$ is holomorphic and non-zero. Thus, the $m$ - tuple $\boldsymbol M$ defined on $(\mathcal H, K)$ is unitarily equivalent to the the $m$ - tuple $\boldsymbol M$ on $(\mathcal H_0,K_0)$, see \cite{misra-CD, misra-CS}. 

The kernel $K_0$ is said to be normalized at $w_0$ in the sense that $K_0(z,w_0)=I_n$ for each $z\in U$.


The commuting $m$-tuple of multiplication operators acting on the Hilbert space $\mathcal H_\Gamma$ is called the canonical model. This terminology is justified by \cite[Theorem 4.12(a)]{misra-CS}, it says, ``the canonical models associated with two generalized Bergman kernels are unitarily equivalent if and only if the normalized forms of the kernels are unitarily equivalent via a unitary that does not depend on points of $\Omega$.'' 
\index{canonical model}
 
It is possible to impose conditions on a kernel function
$K:\Omega\times \Omega \to \mathbb{C}$ so that each of the multiplication operators $M_1, \ldots, M_m$ are bounded on the Hilbert space $(\mathcal H, K)$. Additional conditions, explicitly given in \cite{misra-CS}, on $K$ ensure that $\boldsymbol{M}^*:= (M_1^*, \ldots ,M_m^*)$ is in
$B_1(\Omega^*)$. If we set the curvature $\mathscr K$ of the $m$-tuple $\boldsymbol M^*$ to be the $(1,1)$ - form $$\mathscr K(w):= - \sum_{i,j=1}^m \mathscr K_{i,j}(w)dw_i \wedge d\overbar{w}_j,$$
where $\mathscr K_{i,j}(\bar{w}) =  \big ( \frac{\partial^2}{\partial w_i \partial \overbar{w}_j} \log K \big ) (w,w),$
then the unitary equivalence class of operators $\boldsymbol T$ in $B_1(\Omega^*)$, which we assume is of the form $\boldsymbol M^*$ on some reproducing kernel Hilbert space $(\mathcal H, K)$, is determined by the curvature $(1,1)$ form. 

In the case of a commuting $m$-tuple of operators $\boldsymbol T$ in the Cowen-Douglas class $B_n(\Omega)$, the existence of a spanning section was proved in \cite{misra-ES}. Some examples of spanning sections are given in \cite{misra-BGMR}.

\subsection{Curvature inequalities} 
We may assume, without loss
of generality, that an operator $T$ in $B_1(\Omega)$ has been realized as the
adjoint $M^*$ of the multiplication operator $M$ on some Hilbert space $(\mathcal
H, K)$ of holomorphic functions on $\Omega^*$ possessing a reproducing kernel
$K:\Omega^* \times \Omega^* \to \mathbb C$. For the unit disc $\mathbb D$, the distinction between $\mathbb D$ and $\mathbb D^*$ disappears and we write $K(z,w)$, when strictly speaking, we should be writing $K(\bar{z}, \bar{w})$, $z,w \in \mathbb D$. 
The curvature of the operator $M^*$ may be also written in the form
\begin{eqnarray*}\label{pos1} 
\mathscr K(w) &=& -\frac{\|\gamma(w)\|^2\|\gamma^{\,\prime}(w)\|^2 - |\langle
\gamma^{\,\prime}(w), \gamma(w)\rangle|^2}{\|\gamma(w)\|^4}\end{eqnarray*}
for some holomorphic frame $\gamma$. In particular, choosing $\gamma(\overbar{w}) = K(\cdot,{w}),\, w\in \mathbb D,$ we also have 
\begin{eqnarray*}
\mathscr K(\bar{w}) &=& - \frac{\partial^2}{\partial w \partial\overbar{w}}\log {K(w,w)} = - \frac{K(w,w) (\partial \overbar{\partial} K)(w,w) - |(\partial K)(w,w)|^2 }{K(w,w)^2}.
 \end{eqnarray*}
In either case, since $K$ is nnd, the Cauchy - Schwarz inequality applies, and we see that the numerator is non-negative.
Therefore, $\frac{~\partial^2}{\partial{w}\partial{\bar{w}}}\log
K(w,w)$ must be a non-negative function. 

The contractivity of the adjoint $M^*$ of the multiplication
operator $M$ on some reproducing kernel Hilbert space $(\mathcal
H, K)$ is equivalent to the requirement that ${K}^\ddag(z,w):=(1 -
z\bar{w})K(z,w)$ is nnd on $\mathbb D$. This is easy to prove as long as $K$ is positive definite. However, with a little more care, one can show this assuming only  that $K$ is nnd, see \cite[Lemma 2.1.10]{misra-SG}.  

Now, let $T$ be any contraction in $B_1(\mathbb D)$ realized in the form of 
the adjoint $M^*$ of the multiplication operator $M$ on some reproducing kernel Hilbert space $(\mathcal H, K)$.   Then 
we have \begin{eqnarray*} \frac{~\partial^2}{\partial{w}\partial{\bar{w}}}
\log K(w,w) = \frac{~\partial^2}{\partial{w}\partial{\bar{w}}}
\log {\frac{1}{(1 - |w|^2)}} +
\frac{~\partial^2}{\partial{w}\partial{\bar{w}}}\log K^\ddag(w,w),
\, w\in \mathbb D.\end{eqnarray*} 
Let $S$ be the unilateral shift acting on $\ell^2$. Choosing a holomorphic frame $\gamma_{S^*},$ say $\gamma_{S^*}(w)=(1,w,w^2, \ldots )$, it follows that $\|\gamma_{S^*}(w)\|^2 = (1-|w|^2)^{-1}$ and that $\mathcal K_{S^*}(w) =-(1-|w|^2)^{-2}$, $w\in \mathbb D$. 
We can therefore, rewrite the previous equality in the form 
\begin{eqnarray*} \mathscr
K_{M^*}(w) - \mathcal K_{S^*}(w) = -
\frac{~\partial^2}{\partial{w}\partial{\bar{w}}}\log K^\ddag(w,w) \leq 0,\,\,
w\in\mathbb D. \end{eqnarray*} 
In consequence, we have
$$
\mathscr K_{M^*}(w) \leq \mathcal K_{S^*}(w), \,\,w\in \mathbb D.
$$
Thus the the operator $S^*$ is an extremal operator in the class of all contractive Cowen-Douglas operator in $B_1(\mathbb D)$. The extremal property of the operator $S^*$ prompts the following question due to R. G. Douglas.  
\index{contractive Cowen-Douglas operator}\index{contraction}

\emph{A question of R. G. Douglas.} For a contraction $T$ in $B_1(\mathbb D),$ if $\mathscr K_T(w_0) =- (1-|w_0|^2)^{-2}$ for some fixed $w_0$ in $\mathbb D,$ then does it follow that $T$ must be unitarily equivalent to the operator $S^*$?

It is known that the answer is negative, in general,  however it has an affirmative answer if, for instance, $T$ is a homogeneous contraction in $B_1(\mathbb D),$ see 
\cite{misra-GMshift}. From the simple observation that $\mathcal{K}_T(\bar{\zeta})  = - (1-|\zeta|^2)^{-2}$ for some $\zeta\in \mathbb D$ if and only if the two vectors ${K}^\ddag_{\zeta}$ and $\bar{\partial}{K}^\ddag_{\zeta}$ are linearly dependent,  it follows that the question of Douglas has an affirmative answer in the class of contractive, co-hyponormal backward weighted shifts.  The  Question of Douglas for all those operators $T$ in $B_1(\mathbb D)$   possessing two additional properties, namely, $T^*$ is $2$ hyper-contractive and $(\phi(T))^*$ has the wandering subspace property for any  bi-holomorphic automorphism $\phi$ of $\mathbb D$ mapping $\zeta$ to $0.$ This is Theorem 3.6 of the  of the paper  \cite{misra-MR}.\index{co-hyponormal backward shift}\index{hypercontractive operator}
\index{wandering subspace}

Now suppose that the domain $\Omega$ is not simply connected. In this case, replacing the contractivity of the operator $T$  by the contractivity of the homomorphism $\varrho_T$ induced by an operator $T$, namely, $\varrho_T(r) = r(T)$,  $r\in {\rm Rat}(\Omega^*)$, the algebra of rational functions with poles off $\overbar{\Omega}^*$, we assume that  
$\|r(T)\| \leq \|r\|_{\Omega^*, \infty}.$ For such operators $T,$ the curvature inequality 
\begin{align*}
\mathscr{K}_{T} (\bar{w}) &\leq -4\pi ^2(S_{\Omega^*}(\bar{w},\bar{w}))^2,\;\;\;\bar{w}\in\Omega^*,
\end{align*}
where $S_{\Omega^*}$ is the S\"{z}ego kernel of the domain $\Omega^*,$ was established in \cite{misra-GM}.
Equivalently, since $S_{\Omega}(z,w) = S_{\Omega^*}(\bar{w},\bar{z}),\,\;z,w\in \Omega,$ the curvature inequality takes the form 
\begin{align}\label{eq: alternative CI}
\frac{\partial ^2}{\partial w \partial \bar{w}}\mbox{log} K(w,w)\geq 4\pi ^2 (S_{\Omega}(w,w))^2, \;\;\;w \in \Omega.
\end{align}\index{curvature inequality}
The curvature inequality in \eqref{eq: alternative CI} is for operators $T$ in $B_1(\Omega ^*)$ for which $\overbar{\Omega} ^*$ is a spectral set. 
It is not known if there exists an extremal operator $T$ in $B_1(\Omega^*)$, that is, if  
$\mathscr K_T(w)= - 4\pi ^2 (S_{\Omega^*}(w,w))^2,\,\,w\in\Omega^*,$ for some  operator $T$ in  $B_1(\Omega^*)$.  Indeed, from a result of Suita (cf. \cite{misra-SUITA}), it follows that the adjoint of the multiplication operator on the Hardy space $(H^2(\Omega),ds)$ is not extremal. It was shown in \cite{misra-GM} that for any fixed but arbitrary $w_0\in \Omega,$ there exists an operator $T$ in $B_1(\Omega^*)$  for which equality is achieved, at  $w=w_0,$ in the inequality \eqref{eq: alternative CI}. The question of Douglas is the question of  uniqueness of such an operator. It was partially answered recently in \cite{misra-Ramiz}. The precise result is that these ``point-wise'' extremal operators are determined uniquely within the class of the adjoint of the bundle shifts introduced in \cite{misra-AD}. It was also shown in the same paper that each of these bundle shifts can be realized as  a multiplication operator on a Hilbert space of weighted Hardy space and conversely.  Some very interesting inequalities involving, what the authors call ``higher order curvature'', are given in \cite{misra-WZ}. \index{bundle shift}\index{higher order curvature}

\subsection{Homogeneous operators} \index{homogeneous operator}
The question of Douglas discussed before has an affirmative answer in the class of homogeneous operators in the Cowen-Douglas class. An operator $T$ with its spectrum $\sigma(T)$ contained in the closed unit disc  $\overbar{\mathbb D}$ is said to be homogeneous if $U_\varphi^* T U_\varphi = \varphi(T)$ for each bi-holomorphic automorphism $\varphi$ of the unit disc and some unitary $U_\varphi$. It is then natural to ask what are all the homogeneous operators. Let us  describe (see 
\cite{misra-GMshift, misra-DW, misra-BisMis}) the homogeneous operators in $B_1(\mathbb D)$. 
We first show that the equivalence class of a holomorphic Hermitian line bundle $L$ defined on a bounded planar domain $\Omega$ is determined by its curvature $\mathscr K_L.$ 

\begin{proposition}\label{curvinv}
Suppose that  $E$ and $F$ are two holomorphic Hermitian line bundles defined on some  bounded domain $\Omega\subseteq \mathbb C^m$. Then they are locally equivalent as holomorphic Hermitian bundles if and only if $\mathscr K_E = \mathscr K_F$. 
\end{proposition}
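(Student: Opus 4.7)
The plan is to reduce everything to local frames and the well-known fact that a pluriharmonic function on a simply connected open set is the real part of a holomorphic function.

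For the forward direction, suppose $\Phi: E|_U \to F|_U$ is a holomorphic Hermitian bundle isomorphism on some open $U\subseteq \Omega$. If $\gamma_E$ is a local holomorphic frame for $E$ on $U$, then $\gamma_F := \Phi \circ \gamma_E$ is a local holomorphic frame for $F$ with $\|\gamma_F(w)\|^2 = \|\gamma_E(w)\|^2$ for all $w\in U$. Since the curvature of a Hermitian holomorphic line bundle is given by $-\partial\bar\partial \log\|\gamma\|^2$ for any local holomorphic frame $\gamma$, this gives $\mathscr K_E = \mathscr K_F$ on $U$, and hence on $\Omega$ by uniqueness of holomorphic continuation of real-analytic expressions.

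For the converse, fix $w_0\in\Omega$, choose a simply connected open neighbourhood $U$ of $w_0$ on which both $E$ and $F$ admit holomorphic frames $\gamma_E$ and $\gamma_F$, and set $h_E(w) = \|\gamma_E(w)\|^2$, $h_F(w) = \|\gamma_F(w)\|^2$. Both are positive and real-analytic on $U$. The assumption $\mathscr K_E = \mathscr K_F$ says that
\[
\frac{\partial^2}{\partial w_i \partial \overbar{w}_j}\log h_E(w) = \frac{\partial^2}{\partial w_i \partial \overbar{w}_j}\log h_F(w), \quad 1\le i,j\le m,
\]
on $U$, so the real-valued function $u := \log h_E - \log h_F$ is pluriharmonic on $U$. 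Since $U$ is simply connected, there is a holomorphic function $f$ on $U$ with $\mathrm{Re}\, f = u$, and then $\varphi := e^{f/2}$ is a non-vanishing holomorphic function on $U$ satisfying $|\varphi(w)|^2 = h_E(w)/h_F(w)$.

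Now define a bundle map $\Phi: E|_U \to F|_U$ by declaring $\Phi(\gamma_E(w)) = \varphi(w)\gamma_F(w)$ and extending fibrewise by linearity. Since $\varphi$ is holomorphic and non-vanishing, $\Phi$ is a holomorphic isomorphism of line bundles, and by construction it is a fibrewise isometry: $\|\Phi(\gamma_E(w))\|^2 = |\varphi(w)|^2 h_F(w) = h_E(w) = \|\gamma_E(w)\|^2$. Thus $E$ and $F$ are locally equivalent as holomorphic Hermitian bundles. The only real content lies in the pluriharmonic-to-holomorphic step, which requires the mild cosmetic adjustment of shrinking to a simply connected neighbourhood; no global obstruction appears because the conclusion is itself only local.
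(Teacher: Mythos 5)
Your proof is correct and follows essentially the same route as the paper: the difference $\log h_E - \log h_F$ of the log-metrics is (pluri)harmonic, one completes it to a holomorphic function $f$ on a simply connected neighbourhood, and the non-vanishing holomorphic factor $e^{f/2}$ rescales one frame into an isometric holomorphic correspondence with the other. The paper phrases this as replacing the frame $\eta_w$ by $\tilde\eta_w = e^{(u+iv)/2}\eta_w$ rather than writing down the bundle map $\Phi$ explicitly, but that is only a cosmetic difference.
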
\index{Hermitian metric}

\begin{proof} For simplicity, first consider the case of $m=1$. 
Suppose that $E$ is a holomorphic line bundle over the domain
$\Omega\subseteq \mathbb C$ with a hermitian metric $G(w) =
\langle {\gamma_w}, {\gamma_w}\rangle$, where $\gamma$ is a holomorphic
frame. The curvature $\mathscr K_E$ is given by the formula $\mathscr K_E(w) = - 
(\frac{\partial^2}{\partial w \partial\bar{w}}\log {G})(w)$, for $w\in \Omega$. 
Clearly, in this case, $\mathscr K(w)\equiv 0$ on $\Omega$ is the same as saying that
$\log G$ is harmonic on $\Omega$.  Let $F$ be a second line bundle
over the same domain $\Omega$ with the metric $H$ with respect to
a holomorphic frame $\eta$.  Suppose that the two curvatures
$\mathscr K_E$ and $\mathscr K_F$ are equal.  It then follows that $u=\log
(G/H)$ is harmonic on $\Omega$ and thus there exists a harmonic
conjugate $v$ of $u$ on any simply connected open subset
$\Omega_0$ of $\Omega$.  For $w\in\Omega_0$, define
$\tilde{\eta}_w = e^{(u(w)+iv(w))/2} \eta_w$. 
Then clearly,
$\tilde{\eta}_w$ is a new holomorphic frame for $F$, which we can use
without loss of generality. Consequently, we have the metric $H(w) =
\langle {\tilde{\eta}_w}, {\tilde{\eta}_w} \rangle$ relative to the frame $\tilde{\eta}$ for the vector bundle $F$. We have that
\begin{eqnarray*}
H(w) &=& \langle{\tilde{\eta}_w},{\tilde{\eta}_w}\rangle\\
&=& \langle{e^{(u(w)+iv(w))/2}{\eta}_w}, {e^{(u(w)+iv(w))/2}{\eta}_w} \rangle\\
&=& e^{u(w)}\langle{{\eta}_w},{{\eta}_w}\rangle\\
&=& G(w).
\end{eqnarray*}
This calculation shows that the map $\tilde{\eta}_w \mapsto \gamma_w$
defines an isometric holomorphic bundle map between the vector bundles $E$ and $F$. 

To complete the proof in the general case, recall that 
$$\sum_{i,j=1}^m \big ( \frac{\partial^2}{\partial w_i \partial \bar{w}_j} 
\log G/H \big ) (w) dw_i \wedge d\bar{w}_j=0$$ 
means that the function is $u:=H/G$ is pluriharmonic. The proof then follows exactly the same way as in the case of $m=1$. Indeed, as in  \cite[Theorem 1]{misra-CD1}, the map 
\begin{equation}\label{CD unitary}
U\Big ( \sum_{|I| \leq n} \alpha_I (\bar{\partial}^I
\tilde{\eta})({w_0})\Big ) = \sum_{|I| \leq n} \alpha_I (\bar{\partial}^I
\gamma)(w_0), \,\, \alpha_I \in \mathbb C,
\end{equation}
where $w_0$ is a fixed point in $\Omega$ and $I$ is a multi-index
of length $n$, is well-defined, extends to a unitary operator on
the Hilbert space spanned by the vectors $(\bar{\partial}^I
\tilde{\eta})({w_0})$ and intertwines  the two $m$-tuples of operators in
$\mathrm B_1(\Omega)$ corresponding to the vector bundles $E$ and
$F$.
\end{proof}

As shown in \cite{misra-CD}, it now follows that the curvature $\mathcal K_T$ of an operator $T$ in $B_1(\Omega)$ determines the unitary equivalence class of $T$ and conversely.

\bigskip
\noindent{\bf Theorem (Cowen-Douglas)} \emph{Two operators $T$ and $\tilde{T}$ belonging to the class $B_1(\Omega)$ are unitarily equivalent if and only if $\mathscr K_T = \mathscr K_{\tilde{T}}$. }

\begin{proof}
Let $\gamma_T(w)$ be a holomorphic frame for the line bundle $E_T$ over $\Omega$ corresponding to an operator $T$ in $\mathrm B_1(\Omega)$.
Thus the real analytic function $G_T(w):= \langle{\gamma_T( w)},{\gamma_T(w)}\rangle$ is the Hermitian metric for the bundle $E_T$. Similarly, let $\gamma_{\tilde{T}}$ and $G_{\tilde{T}}$ be the holomorphic frame and the Hermitian metric corresponding to the operator $\tilde{T}$. If $T$ and $\tilde{T}$ are unitarily equivalent, then the eigenvector $\gamma_{\tilde{T}}(w)$ must be a multiple, say $c$ depending on $w$, of the eigenvector $\gamma_T(w)$. However, since both $\gamma_{\tilde{T}}$ and $\gamma_T$ are holomorphic, it follows that $c$ must be holomorphic. Hence $G_{\tilde{T}} (w) = |c(w)|^2  G_T(w)$ and we see that $\mathscr K_T = \mathscr K_{\tilde{T}}$. 

Conversely, if the two curvatures are equal, from Proposition \ref{curvinv}, we find that we may choose, without loss of generality, a holomorphic frame $\gamma_{\tilde{T}}$ for the operator $\tilde{T}$ such that $G_{\tilde{T}}= G_T$.  Since the linear span of the vectors $\gamma_T(w)$ and $\gamma_{\tilde{T}}(w)$ are dense, it follows that the map $U$ taking  $\gamma_T(w)$ to $\gamma_{\tilde{T}}(w)$ is isometric. Extending it linearly, we obtain a unitary operator that intertwines $T$ and $\tilde{T}$ 
\end{proof}

We now explain how the curvature can be extracted directly from an operator $T:\mathcal H \to \mathcal H$ which is in  the class $B_1(\Omega)$. Let  
$\gamma$ be a holomorphic frame for the operator $T$.  Recall that $\gamma^\prime(w),$ $w\in \Omega$, is also in the Hilbert space $\mathcal H$. The  restriction $N(w)$ of the operator $T-w I$ to the two dimensional subspaces $\{\gamma(w), \gamma^\prime(w)\},\,\, w\in \Omega$ is nilpotent and encodes important information about the operator $T$. 

With a little more effort, one may work with commuting tuples of bounded operators on a Hilbert space possessing an open set $\Omega \subseteq \mathbb C^m$ of joint eigenvalues. We postpone the details to Section \ref{locop}.

Let $\mathcal N(w) \subseteq \mathcal H$, $w\in \Omega$, be the subspace consisting of the two linearly independent vectors $\gamma(w)$ and $\gamma^\prime(w)$.
There is a natural nilpotent action $N(w):=(T - wI)_{| \mathcal N(w)}$ on the space $\mathcal N(w)$ determined by the rule 
$$
\gamma^\prime(w) \stackrel{N(\!w\!)}{\longrightarrow} \gamma(w) \stackrel{N(\!w\!)}{\longrightarrow} 0.
$$

Let $e_0(w),\, e_1(w)$ be the orthonormal basis for $\mathcal N(w)$ obtained from $\gamma(w),\,\gamma^\prime(w)$\index{Gram-Schmidt orthonormalization}
 by the Gram-Schmidt orthonormalization. The matrix representation of $N(w)$ with respect to this orthonormal 
basis is of the form $\Big (\begin{smallmatrix} 0& h(w) \\ 0&0\end{smallmatrix}\Big )$. It is easy to compute $h(w)$.  Indeed, we have 
$$h(w) = \frac{\|\gamma(w)\|^2}{(\|\gamma^\prime(w)\|^2 \|\gamma(w)\|^2 - |\langle \gamma^\prime(w), 
\gamma(w)\rangle|^2)^{\frac{1}{2}}}.$$
We observe that $\mathscr K_T(w) = - h(w)^{-2}.$

\index{automorphism}\index{functional calculus}
Let $\varphi$ be a bi-holomorphic automorphism of the unit disc $\mathbb D$. Thus $\varphi(z)$ is of the form $e^{i\theta} \frac{z-\alpha}{1-\overbar{\alpha}z}$ for some $\theta, \alpha$,   $0\leq \theta < 2\pi,\, \alpha \in \mathbb D$. 
Since the spectrum of  $T$ is contained in $\overbar{\mathbb D}$ and $q(z) = 1-\overbar{\alpha}z$ does not vanish on it, we can define $\varphi(T)$ to be the operator $p(T) q(T)^{-1}$, where $p(z) = z-\alpha$. This definition coincides with the usual holomorphic functional calculus. It is not hard to prove that $\varphi(T)$ is in $B_1(\mathbb D)$, whenever $T$ is in $B_1(\mathbb D)$, see \cite{misra-MR}. 

\begin{theorem}
Let $T$ be an operator in $\mathrm B_1(\mathbb D)$. Suppose that $\varphi(T)$ unitarily equivalent to $T$ for each bi-holomorphic automorphism of $\varphi$ of $\mathbb D$.  Then 
$$
\mathscr K_T(\alpha) = (1 - |\alpha|^2)^{-2} \mathscr K_T(0),
$$
where $-\lambda = \mathcal K_T(0) < 0$ is arbitrary.
\end{theorem}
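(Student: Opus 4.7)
The plan is to combine the Cowen--Douglas theorem proved just above with the transformation law for the curvature under a holomorphic change of the eigenvalue parameter. Since $\varphi(T)$ is assumed unitarily equivalent to $T$, and both lie in $\mathrm B_1(\mathbb D)$, the Cowen--Douglas theorem gives $\mathscr K_{\varphi(T)} = \mathscr K_T$ as real analytic functions on $\mathbb D$. So the whole task reduces to computing $\mathscr K_{\varphi(T)}$ in terms of $\mathscr K_T$ and evaluating at a cleverly chosen point.

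First I would produce an explicit holomorphic frame for $\varphi(T)$. Let $\gamma$ be a holomorphic frame for $T$, so that $T\gamma(w) = w\gamma(w)$ for $w \in \mathbb D$. Writing $\varphi = p/q$ as in the paragraph preceding the theorem, the holomorphic functional calculus gives $\varphi(T)\gamma(w) = \varphi(w)\gamma(w)$ for every $w \in \mathbb D$ (note $q(w) \neq 0$ on $\overbar{\mathbb D}$). Setting $\psi := \varphi^{-1}$, which is again a bi-holomorphic automorphism of $\mathbb D$, the map $z \mapsto \gamma(\psi(z))$ is a nonvanishing holomorphic eigensection for $\varphi(T)$ with eigenvalue $z$, hence a holomorphic frame for $\varphi(T)$.

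Now I would apply the chain rule. Since $\psi$ is holomorphic,
\begin{equation*}
\mathscr K_{\varphi(T)}(z) = -\frac{\partial^2}{\partial z \partial \bar z}\log\|\gamma(\psi(z))\|^2 = |\psi'(z)|^2\, \mathscr K_T(\psi(z)).
\end{equation*}
Combining this with $\mathscr K_{\varphi(T)} = \mathscr K_T$ from Cowen--Douglas yields the functional equation
\begin{equation*}
\mathscr K_T(z) = |\psi'(z)|^2 \,\mathscr K_T(\psi(z)), \qquad z \in \mathbb D,
\end{equation*}
valid for every bi-holomorphic automorphism $\varphi$ of $\mathbb D$ (with $\psi = \varphi^{-1}$).

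Finally, given $\alpha \in \mathbb D$, I would choose $\varphi(z) = (z+\alpha)/(1+\overbar{\alpha}z)$, so $\varphi(0) = \alpha$, $\psi(\alpha) = 0$, and $\psi'(\alpha) = 1/\varphi'(0) = 1/(1-|\alpha|^2)$. Substituting $z = \alpha$ in the functional equation gives
\begin{equation*}
\mathscr K_T(\alpha) = (1-|\alpha|^2)^{-2}\,\mathscr K_T(0),
\end{equation*}
which is the desired identity, with $-\lambda := \mathscr K_T(0)$ arbitrary (negative by the computation in the curvature-inequality subsection). The only subtle point in the argument is verifying that $\gamma \circ \psi$ is indeed a holomorphic frame for $\varphi(T)$ in $\mathrm B_1(\mathbb D)$; this is essentially the observation that the rational functional calculus carries $T$-eigenvectors to $\varphi(T)$-eigenvectors, and is the main (modest) technical step.
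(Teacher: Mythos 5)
Your argument is correct, and it reaches the same key identity as the paper --- the change-of-variable formula $\mathscr K_{\varphi(T)}(\varphi(w)) = |\varphi'(w)|^{-2}\,\mathscr K_T(w)$ --- but by a different derivation. The paper obtains this formula by localizing: it restricts $T$ to $\ker(T-w)^2$, where it acts as the $2\times 2$ matrix $\left(\begin{smallmatrix} w & h_T(w)\\ 0 & w\end{smallmatrix}\right)$ with $h_T(w)=\big(-\mathscr K_T(w)\big)^{-1/2}$, applies the functional calculus to get $\left(\begin{smallmatrix} \varphi(w) & \varphi'(w)h_T(w)\\ 0 & \varphi(w)\end{smallmatrix}\right)$, and reads off $h_{\varphi(T)}(\varphi(w)) = |\varphi'(w)|\,h_T(w)$. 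You instead exhibit the explicit frame $z\mapsto \gamma(\psi(z))$, $\psi=\varphi^{-1}$, for $\varphi(T)$ and apply the chain rule to $\log\|\gamma(\psi(z))\|^2$; the paper explicitly acknowledges this alternative (``this is really a change of variable formula for the curvature, which can be obtained directly using the chain rule''), so you have simply carried out the route it mentions but does not take. The remaining steps --- invoking unitary invariance of the curvature and evaluating at a well-chosen automorphism sending $0$ to $\alpha$ --- coincide with the paper's. What the paper's localization approach buys is that it isolates the operator-theoretic content (how the nilpotent local operator $N(w)$ transforms under $\varphi$), which is the viewpoint it develops further in Section 2.1.1 and which generalizes to $B_n(\Omega)$ and to tuples; your approach is shorter and more elementary, resting only on the frame identity $\varphi(T)\gamma(w)=\varphi(w)\gamma(w)$ and the fact (cited in the paper to \cite{misra-MR}) that $\varphi(T)\in B_1(\mathbb D)$, which you correctly flag as the one technical point needing justification.
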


\begin{proof}
For each fixed but arbitrary $w\in \mathbb D$, we have  
$$\varphi(T)_{|\ker (\varphi(T) - \varphi(w))^2} = \varphi\big (T_{|\ker(T-w)^2}\big ).$$ 
Since $T_{|\ker(T-w)^2}$ is of the form  $\left (\begin{smallmatrix}
             w & h_T(w) \\
		0 & w
            \end{smallmatrix} \right )$ and 
$$\varphi \left (\begin{smallmatrix}
             w & h_T(w) \\
		0 & w
            \end{smallmatrix} \right ) = \left (\begin{smallmatrix} 
     \varphi(w) & \varphi^\prime(w) h_T(w)\\
		0 & \varphi(w)
   \end{smallmatrix} \right ),$$ 
it follows that
\begin{eqnarray*}
\varphi(T)_{|\ker (\varphi(T) - \varphi(w))^2} 
= \begin{pmatrix}
     \varphi(w) & \varphi^\prime(w) h_T(w)\\
		0 & \varphi(w)
    \end{pmatrix}
&\cong& \begin{pmatrix}
     \varphi(w) & |\varphi^\prime(w)| h_T(w)\\
		0 & \varphi(w)
    \end{pmatrix},\\
\end{eqnarray*}
where we have used the symbol $\cong$ for unitary equivalence. Finally, we have 
\begin{eqnarray*}
\big ( - \mathscr K_{\varphi(T)}(\varphi(w)) \big )^{-1/2}&=&h_{\varphi(T)}(\varphi(w))\\
&=& |\varphi^\prime(w)| h_T(w) \\
&=&  |\varphi^\prime(w)|\big (- \mathscr K_T(w)\big )^{-1/2}.
\end{eqnarray*}
This is really a ``change of variable formula for the curvature'', which can be obtained directly using the chain rule.

Put $w=0$, choose $\varphi = \varphi_\alpha$ such that $\varphi(0) = \alpha$.
In particualr, take $\varphi_\alpha(z) = \frac{\alpha - z}{1 - \bar{\alpha} z}$. Then
\begin{eqnarray} \label{transform}
\mathscr K_{{\varphi_\alpha}(T)}(\alpha) &=& \mathscr K_{{\varphi_\alpha}(T)}(\varphi_\alpha(0))= |\varphi^\prime(0)|^{-2} \mathscr K_T(0) \nonumber\\
&=& (1 - |\alpha|^2)^{-2} \mathscr K_T(0),\quad \alpha \in \mathbb D.
\end{eqnarray}

Now, suppose that $\varphi_\alpha(T)$ is unitarily equivalent to  $T$ for all $\varphi_\alpha$, $\alpha \in \mathbb D$.  Then $K_{{\varphi_\alpha}(T)}(w) = \mathcal K_T(w)$ for all $w\in \mathbb D$. Hence
\begin{equation} \label{homog}
 (1 - |\alpha|^2)^{-2} \mathscr K_T(0) = K_{{\varphi_\alpha}(T)}(\varphi_\alpha(0))
=K_T(\alpha),\quad \alpha \in \mathbb D.
\end{equation}
(Here the first equality is the change of variable formula given in \eqref{transform} and the second equality follows from equality of the curvature of two unitarily equivalent operators.)
\end{proof}

\begin{corollary}
If $T$ is a homogeneous operator in $B_1(\mathbb D)$, then it must be the adjoint of the multiplication operator on the reproducing kernel Hilbert space $\mathcal H^{(\lambda)}$ determined by the reproducing kernel $K^{(\lambda)}(z,w):= (1-z\bar{w})^{-2\lambda}$.
\end{corollary}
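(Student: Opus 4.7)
The plan is to extract the curvature function of $T$ explicitly from the preceding theorem, compare it with the curvature of the candidate model $M^*$ on $\mathcal H^{(\lambda)}$, and invoke the $B_1(\mathbb D)$ Cowen--Douglas theorem recorded just above.

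First I would apply the theorem to the homogeneous operator $T$ to obtain
\[
\mathscr K_T(\alpha) = (1-|\alpha|^2)^{-2}\,\mathscr K_T(0),\quad\alpha\in\mathbb D.
\]
Before renaming the scalar $\mathscr K_T(0)$, I would check that it is strictly negative. The Cauchy--Schwarz identity for the numerator of $\mathscr K_T$ displayed earlier already yields $\mathscr K_T \le 0$ on $\mathbb D$; and if $\mathscr K_T(0)$ were zero, the identity above would force $\mathscr K_T \equiv 0$, hence by the formula $\mathscr K_T = -h^{-2}$ the quantity $h$ blows up, which amounts to $\gamma(w)$ and $\gamma^\prime(w)$ being linearly dependent at every point of $\mathbb D$. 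Integrating this dependence confines every $\gamma(w)$ to a single one-dimensional subspace of $\mathcal H$, contradicting the density condition built into the definition of $B_1(\mathbb D)$. I may therefore set $\mathscr K_T(0) = -2\lambda$ with $\lambda > 0$ and rewrite
\[
\mathscr K_T(\alpha) = -\frac{2\lambda}{(1-|\alpha|^2)^2}.
\]

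Next I would identify this as the curvature of $M^*$ on $\mathcal H^{(\lambda)}$. For $\lambda > 0$ the binomial expansion of $(1-z\overbar w)^{-2\lambda}$ has non-negative coefficients, so $K^{(\lambda)}$ is non-negative definite, and multiplication by the coordinate function acts as a weighted shift with bounded weights on the orthonormal basis built from monomials, so $M$ is bounded and $M^* \in B_1(\mathbb D)$. A direct differentiation of $\log K^{(\lambda)}(w,w) = -2\lambda \log(1-|w|^2)$ then yields $\mathscr K_{M^*}(w) = -2\lambda(1-|w|^2)^{-2}$, matching $\mathscr K_T$ on all of $\mathbb D$. The Cowen--Douglas theorem for $B_1(\mathbb D)$ above immediately delivers a unitary equivalence between $T$ and $M^*$ acting on $\mathcal H^{(\lambda)}$.

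The step I expect to require the most care is ruling out the degenerate case $\mathscr K_T \equiv 0$: the sketch above passes from pointwise linear dependence of $\gamma$ and $\gamma^\prime$ to global containment of the family $\{\gamma(w):w\in \mathbb D\}$ in a one-dimensional subspace, and then contradicts density; this transition deserves a careful line-by-line justification. Everything else reduces either to the theorem just proved or to a routine kernel computation.
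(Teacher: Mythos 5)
Your proposal is correct and follows essentially the same route as the paper: both extract the curvature $\mathscr K_T(\alpha)=(1-|\alpha|^2)^{-2}\mathscr K_T(0)$ from the preceding theorem and then identify the corresponding metric/kernel as $(1-z\bar w)^{-2\lambda}$, you by matching curvatures and invoking the completeness of the curvature invariant for $B_1(\mathbb D)$, the paper by polarizing the metric $G(w)=(1-|w|^2)^{-2\lambda}$. Your added justifications --- the strict negativity of $\mathscr K_T(0)$ via the linear independence of $\gamma(w)$ and $\gamma'(w)$ (which also follows directly from $\dim\ker(T-w)^2=2$ for $T\in B_1(\mathbb D)$) and the boundedness of $M$ on $\mathcal H^{(\lambda)}$ --- are sound and fill in details the paper leaves implicit, and your normalization $\mathscr K_T(0)=-2\lambda$ is the one consistent with the kernel $(1-z\bar w)^{-2\lambda}$.
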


\begin{proof}
It follows from the Theorem that if the operator $T$ is homogeneous, then  the corresponding metric $G$ for the bundle $E_T$, which is determined up to the
square of the absolute value of a holomorphic function, is of the form:
$$
G(w) = (1 - |w|^2)^{-2\lambda},\quad w\in \mathbb D,\,\lambda >0.
$$
This corresponds to the reproducing kernel $K$ (obtained via polarization of the real analytic function $G$) of the form:\index{polarization}
$$
K^{(\lambda)}(z, w) =  (1-z \bar{w})^{-2\lambda}, \quad\lambda >0;\,\, z, w \in \mathbb D.
$$
\end{proof}\index{Bergman kernel}
The kernel $B(z, w) = (1-z \bar{w})^{-2}$ is the reproducing kernel of the Hilbert space of square integrable (with respect to area measure) holomorphic functions defined on the unit disc $\mathbb D$ and is known as the Bergman kernel. The kernel $K^{(\lambda)}$ is therefore a power of the Bergman kernel and the Hilbert space $\mathcal H^{(\lambda)}$ determined by $K^{(\lambda)}$ is known as the weighted Bergman space.  The adjoint of the multiplication operator $M$ on the Hilbert space $\mathcal H^{(\lambda)}$ corresponding to the reproducing kernel $K^{(\lambda)}$ is in $B_1(\mathbb D)$. 

If $\lambda > \tfrac{1}{2}$, then the multiplication operator $M^{(\lambda)}$ is subnormal and the inner product in the Hilbert space $\mathcal H^{(\lambda)}$ is induced by  the measure 
$$
d\mu(z) = \tfrac{2\lambda - 1}{\pi}  (1-|z|^2)^{2\lambda - 2} dz d \bar{z}.
$$  
For $\lambda =\tfrac{1}{2}$, this operator is the usual shift on the Hardy space. However, for $\lambda < \tfrac{1}{2}$, there is no such measure and the corresponding operator $M^{(\lambda)}$ is not a contraction, not even power bounded and therefore not subnormal.  

\subsection{Quotient and submodules}\index{quotien submodules}
The interaction of one-variable function theory and functional analysis with operator theory over the past half century has been extremely fruitful. Much of the progress in multivariable
spectral theory during the last two decades was made possible by the use of methods from several complex variables, complex analytic and algebraic geometry.   A unifying approach to many of  these problems is possible in the language of  Hilbert modules.
\index{Hilbert modules}\index{polynomial ring}

For any ring $R$ and an ideal  $I \subseteq R$, the study of the pair $I$ and  $R/I$ as modules over the ring $R$ is natural in algebra.
However, if one were to assume that the ring $R$ has more
structure, for instance, if $R$ is taken to be the polynomial ring $\mathbb C[\boldsymbol z]$, in $m$ - variables, equipped with the supremum norm over some bounded domain $\Omega$ in $\mathbb C^m$, then the study of a pair analogous to $I$ and  $R/I$, as above, still makes sense and is important. Fix an inner product on the algebra $\mathbb C[\boldsymbol z]$. The completion of $\mathbb C[\boldsymbol z]$ with respect to this inner product is a Hilbert space, say  $\mathcal {M}$. It is natural to assume that the natural action of point-wise multiplication module action $\mathbb C[\boldsymbol z] \times \mathbb C[\boldsymbol z] \to \mathbb C[\boldsymbol z]$ extends continuously to $\mathbb C[\boldsymbol z] \times \mathcal{M} \to \mathcal{M}$ making $\mathcal M$ a module over $\mathbb C[\boldsymbol z]$.  Natural examples are the Hardy and Bergman spaces on some bounded domain $\Omega\subseteq \mathbb C^m$. Here the module map is induced by point-wise multiplication, namely,  ${\mathbf m}_p(f) = p f,\, p \in \mathbb C[\boldsymbol z],\, f\in \mathcal M$. A Hilbert module need not be obtained as the completion of the polynomial ring, more generally, a Hilbert module is simply a Hilbert space equipped with an action of a ring $R$.
When this action is continuous in both  the variables, the Hilbert space $\mathcal M$ is said to be a Hilbert module over the polynomial ring $\mathbb C[\boldsymbol z]$.  

A closed subspace $\mathcal S$ of $\mathcal M$ is said to be a submodule of $\mathcal M$ if $\mathbf m_p h\in \mathcal S$ for all $ h\in \mathcal S$ and $p\in \mathbb C[\boldsymbol z]$.  \index{submodule}
The quotient module $\mathcal Q :={\mathcal M}/{\mathcal S}$ is  the Hilbert space $\mathcal S^\perp$, where the module multiplication is defined to be the compression of the module multiplication on $\mathcal M$ to the subspace $\mathcal S^\perp$, that is, the module action on $\mathcal Q$ is given by  $\mathbf m_p (h) = P_{\mathcal S^\perp} (\mathbf m_p h)$, $h\in \mathcal S^\perp$. Two Hilbert modules  $\mathcal M_1$ and $\mathcal M_2$ over $\mathbb C[\boldsymbol z]$ are said to be isomorphic if there exists a unitary operator $U:\mathcal M_1\to \mathcal M_2$ such that $U(p \cdot h)= p \cdot Uh$, $p\in \mathbb C[\boldsymbol z]$, $h\in \mathcal M_1$.  A Hilbert module {$\mathcal M$} over the polynomial ring
{$\mathbb C[\mathbf z]$} is said to be in $B_n(\Omega)$ if
{$\dim\mathcal H/\mathfrak m_w\mathcal M =
n <\infty$} for all $w\in\Omega,$ and $\cap_{w\in \Omega} \mathfrak m_w\mathcal M = \{0\}$, where $\mathfrak m_w$ is the maximal ideal\index{maximal ideal} 
in $\mathbb C[\mathbf z]$ at $w$. \index{analytic Hilbert module}
In practice, it is enough to work with analytic Hilbert modules defined below.

\begin{definition}
A Hilbert module $\mathcal M$ over $\mathbb{C}[\boldsymbol z]$ is called an analytic Hilbert module if the Hilbert space $\mathcal M$ consists of holomorphic functions on some bounded domain $\Omega\subseteq \mathbb{C}^m$,
$\mathbb{C}[\boldsymbol z]\subseteq\mathcal M$ is dense in $\mathcal M$, and
$\mathcal H$ possesses a reproducing kernel  $K:\Omega\times\Omega\to\mathcal L(V)$, where $V$ is some finite dimensional linear space. 

The module action in an analytic Hilbert module $\mathcal M$  is given by point-wise multiplication, that is, ${\mathfrak m}_p(h)(\boldsymbol z) = p(\boldsymbol z) h(\boldsymbol z),\, \boldsymbol z\in \Omega$.
\end{definition}

\index{locally free module}\index{quasi-free module}
There are many closely related notions like the locally free module and the quasi-free module (cf. \cite{misra-ChenRGD, misra-quasi}). No matter which notion one adopts, the goal is to ensure the existence of a holomorphic Hermitian vector bundle such that the equivalence class of the module and that of the vector bundle are in one to one correspondence.  The generalized Bergman kernel and the sharp kernel appearing in \cite{misra-CS} and \cite{misra-AS} achieve a similar objective. The polynomial density in an analytic Hilbert module ensures that the joint kernel 
$$\cap_{i=1}^m \ker(M_i - w_i)^*, \, w\in \Omega^*,$$ 
is of constant dimension, see \cite[Remark, pp. 5]{misra-PIASeqHM}. Let $K$ be the reproducing kernel of the analytic Hilbert module $\mathcal M$ and  $\{\varepsilon_i\}_{i=1}^n,$ be a basis of the linear space $V$. Evidently,  the map 
$$\gamma_i: w\to K(\cdot, \bar{w}) \varepsilon_i,\,\, w\in \Omega^*,\,i=1,\ldots, n,$$ 
serves as a holomorphic frame for the Hilbert module $\mathcal M$. This way, we obtain a holomorphic Hermitian vector bundle $E_{\mathcal M}$.

Let $\mathcal{Z} \subseteq \Omega$ be an analytic submanifold, $T\Omega = T\mathcal{Z}\stackrel{\cdot}{+}N\mathcal{Z}$ be the decomposition of the tangent bundle of $\Omega$.\index{tangent bundle}\index{normal bundle}
Pick a basis for the normal bundle $N\mathcal{Z}$, say   $\partial_i,\,1\leq i \leq p$. 
Fix the submodule $\mathcal M_0 \subseteq \mathcal{M}$ of all
functions $f$ in $\mathcal{M}$ vanishing on  $\mathcal{Z}$ to total order $k$, that is, $\partial^{\boldsymbol{\alpha}}f_{|{\rm res}~\mathcal Z} = 0$ for\index{total order}
all multi index $\boldsymbol{\alpha}$ of length less or equal to $k$.
We now have a a short exact sequence\index{short exact sequence}
$$0 \longleftarrow  {\mathcal{Q}} \longleftarrow
{\mathcal{M}} \stackrel{X}{\longleftarrow} {\mathcal M_0} \longleftarrow 0,$$
where $\mathcal{Q} = \mathcal{M} \ominus \mathcal M_0$ is the quotient module and $X$ is the inclusion map.

One of the fundamental problems is to find a canonical model and 
obtain a (complete) set of unitary invariants for the quotient module $\mathcal{Q}$.

If the submodule is taken to be the maximal set of functions vanishing on an analytic hypersurface  ${\mathcal Z}$ in $\Omega$, then appealing to an earlier result of Aronszajn \cite{misra-A} the following theorem was proved in \cite{misra-rgdgm} to
analyze the quotient module $\mathcal Q$.  Set 
$$\mathcal M_{\text{res}} = \{f_{| \mathcal Z}\mid f\in \mathcal M\}$$ 
and 
$$\|f_{|\mathcal Z}\|_{\text{res}} = \inf\{\|g\|_{\mathcal M}\mid
g\in \mathcal M, g_{|\mathcal Z} = f_{|\mathcal Z}\}.$$

\bigskip
\noindent{\bf Theorem} (Aronszajn).
\emph{The restriction $\mathcal M_{\text{res}}$ is a  Hilbert module over $\mathbb C[\boldsymbol z]$ possessing a reproducing kernel $K_{\text{res}}$, which is the restriction of $K$ to $\mathcal Z$, that is,  $
K_{\text{res}}(\cdot,\boldsymbol{w}) = K(\cdot,\boldsymbol{w})|_{\mathcal Z}$ for $\boldsymbol{w}$ in $\mathcal Z$. Furthermore, 
$\mathcal Q$ is isometrically isomorphic to $\mathcal M_{\text{res}}$.}

\bigskip
As an example, consider the Hardy module $H^2(\mathbb D^2)$.   Since the S\"{z}ego kernel
$$\mathbb S_{\mathbb D^2}(\mathbf z, \mathbf w) = \tfrac{1}{(1-z_1\bar{w}_1)}\tfrac{1}{(1-z_2\bar{w}_2)}$$
is the reproducing kernel of  $H^2(\mathbb D^2)$, restricting it to the hyper-surface  $z_1-z_2=0$ and using new coordinates $u_1=\tfrac{z_1-z_2}{2}$, $u_2=\tfrac{z_1+z_2}{2}$, we see that $
K_{\mathcal Q}(u,v) = \frac1{(1-u\overbar{v})^2}$ for $u,v$ in $\{z_1-z_2=0\}$. This is a
multiple of the kernel function for the Bergman space $L_{\rm hol}^2(\mathbb D)$. Hence the quotient module is isometrically isomorphism to the Bergman module since
multiplication by  a constant doesn't change the isomorphism class of a 
Hilbert module.\index{Bergman module}\index{Hardy module}

Thus the extension of Aronszajn's result provides a model for the quotient module. However, Hilbert modules determined by different kernel functions may be equivalent. To obtain invariants one approach
is to appeal to the inherent complex geometry.  Assume that the 
$m$ - tuple of multiplication operators by the coordinate functions on the  Hilbert module $\mathcal M$ belongs to $B_1(\Omega)$. Then the results from \cite{misra-CD} apply
and show that the curvature is a complete unitary invariant. Therefore, if  the quotient module belongs to $B_1(\mathcal Z)$, we need to compute its curvature. It is shown in \cite{misra-rgdgm} that the curvature of $\mathcal Q$ is the restriction of the curvature $(1,1)$ form of $\mathcal M$ to the hyper-surface $\mathcal Z$ followed by a projection to the $(1,1)$ forms on $\mathcal Z$. 

The submodule in \cite{misra-DMV}  is taken to be the (maximal) set of functions which vanish to some given order { $k$} on the hypersurface { ${\mathcal Z}$}. As in the previous case, two descriptions are provided for the quotient module. The first one, produces a Hilbert space of holomorphic functions taking values in $\mathbb C^{k}$ via what is known as the jet construction. The kernel function now takes values in $k\times k$ matrices.  The second one provides a rank { $k$}  holomorphic Hermitian vector bundle. Although, describing a complete set of invaraints is much more complicated in this case { (cf. \cite{misra-qmtrans})}.

In the paper \cite{misra-CD3}, it was  observed that all the submodules of the Hardy module are  isomorphic to the Hardy module, that is, there exists an intertwining unitary module map between them.  Applying the von Neumann-Wold decomposition, it is not difficult to obtain description of all the submodules of the Hardy module. Following this new proof of the Beurling's theorem, it was natural determine isomorphism classes of submodules of other Hilbert modules. For instance, the situation is much more complicated for the Hardy module in $m$ - variables. Indeed, the submodule \index{Wold decomposition}
$$
H^2_0(\mathbb D^2) :=\{f \in H^2(\mathbb D^2)\mid f(0,0)=0\}
$$
is not equivalent to the Hardy module. This is easy to see: dimension of the joint kernel $\cap_{i=1,2} \ker M_i^*$ of the two multiplication operators on $H^2(\mathbb D^2)$ is $1$, while it is $2$ on $H^2_0(\mathbb D^2)$.

The fundamental question of which submodules of a Hilbert module are equivalent was answered in \cite{misra-D-P-S-Y} after making reasonable hypothesis on the nature of the submodule. A different approach to the same problem is outlined in \cite{misra-SBisM}. A sheaf model to study submodules like the Hardy module $H^2_0(\mathbb D^2)$ of functions vanishing at $(0,0)$ is given in \cite{misra-BMP}.

\subsection{Flag structure} \index{flat structure}
Fix a bounded planar domain $\Omega$. Let $E$ be a holomorphic Hermitian vector bundle of rank $n$ in $\Omega \times \mathcal H$. By a well-known theorem of Grauert, every holomorphic vector bundle over a plane domain is trivial. So, there exists a holomorphic frame  $\gamma_1, \ldots , \gamma_n: \Omega \to \mathcal H$. A Hermitian metric on $E$ relative to this frame is given by the formula 
$G_{\gamma}(w)= \left( \langle \gamma_i(w), \gamma_j(w)\rangle \right)$. The curvature of the vector bundle $E$ is a complex $(1,1)$ form which is given by the formula  \index{second fundamental form}
$$ \mathscr K_E(w)= \sum_{i,j=1}^m \mathscr K_{i,j}(w)\,  dw_i\wedge d\bar{w}_j,$$
where 
$$\mathscr K_{i,j}(w) = -\frac{\partial}{\partial \bar{w}_j}\big (G_{\gamma}^{-1}(w) (\frac{\partial}{\partial w_i} G_{\gamma}(w))\big ).$$  
It clearly depends on the choice of the frame ${\gamma}$ except when $n=1$. A complete set of invariants is given in 
\cite{misra-CD,misra-CD1}. However, these invariants are not easy to compute. So, finding a tractable set of invariants for a smaller class of vector bundles which is complete would be worthwhile. For instance, in the paper \cite{misra-JJKM}, irreducible holomorphic Hermitian vector bundles,  possessing a flag structure have been isolated. For these, the curvature together with the second fundamental form (relative to the flag) is a complete set of invariants. As an application, at least for $n=2$, it is shown that the homogeneous holomorphic Hermitian vector bundles are in this class. A complete description of these is then given. This is very similar to the case of $n=1$ except that now the second fundamental form associated with the flag has to be also considered along with the curvature. 
All the vector bundles in this class and the operators corresponding to them are irreducible. The flag structure they possess by definition  is rigid which  aids in the construction of a canonical model and in finding a complete set of unitary invariants. The study of commuting tuples of operators in the Cowen-Douglas class possessing a flag structure is under way. 

The definition of the smaller  class $\mathcal FB_2(\Omega)$ of operators in $B_2(\Omega)$ below is from \cite{misra-JJKM}. We will discuss the class $\mathcal FB_n(\Omega)$, $n >1$ separately at the end. 

\begin{definition}  We let $\mathcal FB_2(\Omega)$ denote the set of bounded linear operators $T$ for which we can find operators  $T_0,T_1$  in $\mathcal{B}_1(\Omega)$ and  a non-zero intertwiner
$S $ between $T_0$ and $T_1$, that is, $T_0S=S T_1$ so that 
$$T=\begin{pmatrix}
 T_0 & S \\
 0 & T_1 \\
\end{pmatrix}.$$
\end{definition}

An operator $T$ in  $B_2(\Omega)$ admits a decomposition of the form  $\begin{psmallmatrix}
T_0 & S \\
0 & T_1 \\
\end{psmallmatrix}$
for some pair of operators $T_0$ and $T_1$ in ${B}_1(\Omega)$  
(cf.  \cite[Theorem 1.49]{misra-jw}) . 
Conversely, an operator $T,$ which admits a decomposition of this form for some choice of $T_0, T_1$ in $B_1(\Omega)$ can be shown  to be in $B_2(\Omega).$
In defining the new class $\mathcal FB_2(\Omega)$, we are merely imposing one additional condition, namely that 
$T_0S=ST_1$.

An operator $T$ is in the class $\mathcal FB_2(\Omega)$ if and only if there exist a frame 
$\{\gamma_0,\gamma_1\}$ of the vector bundle $E_{T}$ such
that $\gamma_0(w)$ and $t_1(w):=\tfrac{\partial}{\partial w}\gamma_0(w)-\gamma_1(w)$ are orthogonal for all $w$ in $\Omega$.  This is also equivalent to the existence of a  frame 
$\{\gamma_0,\gamma_1\}$ of the vector bundle $E_{T}$ such that 
$$\tfrac{\partial}{\partial w}\|\gamma_0(w)\|^2=\langle\gamma_1(w),\gamma_0(w)\rangle,\quad w\in \Omega.$$ 
Our first main theorem on unitary classification is given below. 
  
\begin{theorem}\label{maint}
Let 
$$T=\begin{pmatrix}T_0 & S \\
0 & T_1 \\
\end{pmatrix},\qquad\tilde{T}=\begin{pmatrix}\tilde{T}_0 & \tilde S \\
0 & \tilde{T}_1 \\
\end{pmatrix}$$ 
be two operators in $\mathcal FB_2(\Omega)$. Also let $t_1$ and
$\tilde{t}_1$ be non-zero sections of the holomorphic Hermitian vector
bundles $E_{T_1}$ and $E_{\tilde{T}_1}$ respectively. The operators $T$ and
$\tilde{T}$ are equivalent if and only if
$\mathcal{K}_{T_0}=\mathcal{K}_{\tilde{T}_0}$ (or
$\mathcal{K}_{T_1}=\mathcal{K}_{\tilde{T}_1}$) and
$$\frac{\|S(t_1)\|^2}{\|t_1\|^2}= \frac{\|\tilde
S(\tilde{t}_1)\|^2}{\|\tilde{t}_1\|^2}.$$
\end{theorem}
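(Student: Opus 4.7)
The plan is to prove both implications, with the substance in sufficiency, where I build a unitary equivalence by matching the metric matrix of the rank-two bundle $E_T$ entry by entry.

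For necessity, suppose $UT=\tilde T U$ with $U$ unitary. The first step is rigidity of the flag: the invariant subspace $\mathcal H_{T_0}\subseteq\mathcal H_T$ is intrinsically determined by $T$ through the $\mathcal FB_2$-orthogonality condition on frames, so $U(\mathcal H_{T_0})=\mathcal H_{\tilde T_0}$. This produces a restriction $U_0$ intertwining $T_0$ with $\tilde T_0$ and an induced unitary $U_1$ on the orthogonal complement intertwining $T_1$ with $\tilde T_1$, with $U_0S=\tilde SU_1$. Proposition~\ref{curvinv} applied to $E_{T_0}$ (or equivalently to $E_{T_1}$) then yields the curvature equality. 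For the ratio, given any non-zero section $t_1$ of $E_{T_1}$, set $\tilde t_1:=U_1t_1$; unitarity gives $\|St_1\|=\|\tilde S\tilde t_1\|$ and $\|t_1\|=\|\tilde t_1\|$, and since the ratio is invariant under holomorphic rescaling of the section, the required identity holds for any choice of $\tilde t_1$ as well.

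For sufficiency, fix holomorphic frames $\eta_0$ of $E_{T_0}$, $t_1$ of $E_{T_1}$, and analogously on the tilded side. Since $S$ intertwines and $T_0\in B_1(\Omega)$, one has $St_1=\phi\,\eta_0$ for a holomorphic scalar $\phi$, and similarly $\tilde S\tilde t_1=\tilde\phi\,\tilde\eta_0$. Proposition~\ref{curvinv} together with $\mathscr K_{T_0}=\mathscr K_{\tilde T_0}$ permits a holomorphic rescaling of $\tilde\eta_0$ with $\|\tilde\eta_0\|^2=\|\eta_0\|^2$ pointwise. The ratio hypothesis then becomes $|\phi|^2/\|t_1\|^2=|\tilde\phi|^2/\|\tilde t_1\|^2$; combining this with the derived identity $\mathscr K_{T_1}=\mathscr K_{T_0}+\partial\bar\partial\log(\|St_1\|^2/\|t_1\|^2)$ (which is what legitimises the ``or'' in the statement) shows that $\log|\phi/\tilde\phi|^2$ is harmonic, so $\phi/\tilde\phi$ is a nowhere-vanishing holomorphic function. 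Rescaling $\tilde t_1$ by this factor arranges $\tilde\phi=\phi$ and $\|\tilde t_1\|^2=\|t_1\|^2$.

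With all inputs matched, assemble an $\mathcal FB_2$-frame $\{\gamma_0,\gamma_1\}$ of $E_T$ from $\eta_0, t_1, \phi$: namely $\gamma_0=\eta_0\oplus 0$ and $\gamma_1=(-\phi\,\partial\eta_0+c\,\eta_0)\oplus t_1$, where the scalar $c(w)$ is forced by the orthogonality $\gamma_0\perp\partial\gamma_0-\gamma_1$ together with the eigenvalue equation $(T_0-w)\partial\eta_0=\eta_0$. A direct computation of the $2\times 2$ metric matrix $G(w)=(\langle\gamma_i(w),\gamma_j(w)\rangle)_{i,j=0,1}$ shows every entry is a universal expression in $\|\eta_0\|^2$, $\|t_1\|^2$, $\phi$ and the derivatives of $\|\eta_0\|^2$; in particular the $\mathcal FB_2$ condition collapses $G_{01}$ to $\partial_{\bar w}\|\eta_0\|^2$ and reduces $G_{11}$ to a universal polynomial in $\phi$, $\|t_1\|^2$, $\|\eta_0\|^2$ and $\partial\bar\partial\|\eta_0\|^2$. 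Since all these data coincide for $T$ and $\tilde T$, one has $G=\tilde G$, so $\gamma_i\mapsto\tilde\gamma_i$ extends to an isometric holomorphic bundle isomorphism $E_T\cong E_{\tilde T}$, and the Cowen--Douglas theorem gives $T\cong\tilde T$. The principal obstacle is the rigidity of the flag in the necessity direction, i.e.\ that any unitary equivalence must respect the canonical sub-bundle $E_{T_0}$; a secondary subtlety in sufficiency is the holomorphicity of $\phi/\tilde\phi$, which hinges on the derived curvature identity for $\mathscr K_{T_1}$.
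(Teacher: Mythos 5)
This survey does not actually reproduce a proof of Theorem \ref{maint}; it is quoted from \cite{misra-JJKM}, so your attempt can only be measured against the argument given there. Your overall architecture --- necessity via block-diagonalising the intertwining unitary, sufficiency via normalising holomorphic frames until the Gram matrices of $E_T$ and $E_{\tilde T}$ coincide and then invoking the Cowen--Douglas theorem --- is the right one and is essentially the route of \cite{misra-JJKM}. The sufficiency half is sound in substance: $St_1=\phi\,\eta_0$ with $\phi$ holomorphic, the identity $\mathscr K_{T_1}-\mathscr K_{T_0}=\partial\bar\partial\log\big(\|St_1\|^2/\|t_1\|^2\big)$ (which does legitimise the ``or''), and the two rescalings forcing $\|\tilde\eta_0\|^2=\|\eta_0\|^2$, $\tilde\phi=\phi$ and $\|\tilde t_1\|^2=\|t_1\|^2$ are all correct. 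One glitch: the scalar $c(w)$ you extract from the orthogonality $\gamma_0\perp\partial\gamma_0-\gamma_1$ comes out to be $(1+\phi)\,\partial_w\log\|\eta_0(w)\|^2$, which is not holomorphic, so your $\gamma_1$ is not a holomorphic section and cannot be fed into the unitary \eqref{CD unitary}. This is easily repaired --- take $c\equiv 0$, or better the canonical frame $\gamma_0=(S\tau_1,0)$, $\gamma_1=(\partial (S\tau_1),-\tau_1)$, whose Gram matrix is the universal expression in $\|S\tau_1\|^2$ and $\|\tau_1\|^2$ recorded in \eqref{main} --- but as written the last step does not go through.

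The genuine gap is in necessity. Everything there rests on the assertion that an arbitrary unitary $U$ with $UT=\tilde TU$ carries $\mathcal H_0$ onto $\tilde{\mathcal H}_0$, and your justification (``the invariant subspace is intrinsically determined by the $\mathcal FB_2$-orthogonality condition'') restates the claim rather than proving it. A priori $U$ has four blocks; the intertwining relations only yield $U_{10}T_0=\tilde T_1U_{10}$ and $U_{10}S+U_{11}T_1=\tilde T_1U_{11}$, and killing $U_{10}$ from these is exactly the content of the rigidity theorem (Theorem \ref{hdu}, i.e.\ \cite[Theorem 3.5]{misra-JJKM}), which is where the real work in the proof of Theorem \ref{maint} lies. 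Without it, $\mathscr K_{T_0}$ and $\|St_1\|^2/\|t_1\|^2$ are only known to be invariants of the triple $(T_0,T_1,S)$ and not of the operator $T$, so the forward implication is unproved. You correctly flag this as ``the principal obstacle,'' but identifying the obstacle is not the same as removing it.
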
  
Cowen and Douglas point out in \cite{misra-CD1} that an operator in $B_1(\Omega)$ must be irreducible.  However, determining which operators in $B_n(\Omega)$ are irreducible is a formidable task.  It turns out that the operators in $\mathcal FB_2(\Omega)$ are always irreducible.  Indeed, if we assume $S$ is invertible, then $T$ is strongly irreducible. 
  
Recall that an operator $T$  in the Cowen-Douglas class $B_n(\Omega)$,  up to unitary equivalence, is the  adjoint of the multiplication operator {$M$} on a Hilbert space {$\mathcal H$} consisting of holomorphic functions on $\Omega^*:=\{\bar{w}: w \in \Omega\}$ possessing a reproducing kernel {$K$.}  What about operators in $\mathcal FB_n(\Omega)$? For $n=2,$ a model for these operators is described below.   

For an operator $T\in \mathcal FB_2(\Omega)$, there exists a  holomorphic frame $\gamma=(\gamma_0,\gamma_1)$ with the property $\gamma_1(w):=\tfrac{\partial}{\partial w}\gamma_0(w)-t_1(w)$ and that $t_1(w)$ is orthogonal to $\gamma_0(w)$, $w\in\Omega$, for some holomorphic map $t_1:\Omega \to \mathcal H$. In what follows, we fix a holomorphic frame with this property. Then the operator $T$ is unitarily equivalent to the adjoint of the multiplication operator $M$ on a Hilbert space
$\mathcal{H}_{\Gamma} \subseteq {\rm Hol}(\Omega^*, \mathbb C^2)$ possessing a reproducing kernel $K_{\Gamma}:\Omega^* \times \Omega^* \to \mathcal M_2(\mathbb  C)$. The details are in \cite{misra-JJKM}. It is easy to write down the kernel $K_\Gamma$ explictly: For $z,w\in\Omega^*$, we have 
\begin{eqnarray*}
K_{\Gamma}(z,w)&=&
\begin{pmatrix}
  \langle \gamma_0(\bar w),\gamma_0(\bar z)\rangle &
  \langle \gamma_1(\bar w),\gamma_0(\bar z)\rangle \\
   \langle \gamma_0(\bar w),\gamma_1(\bar z)\rangle &
   \langle \gamma_1(\bar w),\gamma_1(\bar z)\rangle \\
\end{pmatrix}\\
&=& \begin{pmatrix}
  \langle \gamma_0(\bar w),\gamma_0(\bar z)\rangle &
  \frac{\partial}{\partial \bar w}\langle \gamma_0(\bar w),\gamma_0(\bar z)\rangle \\
  \frac{\partial}{\partial z} \langle \gamma_0(\bar w),\gamma_0(\bar z)\rangle
  &
 \frac{\partial^2}{\partial z\partial \bar w} \langle \gamma_0(\bar w),
 \gamma_0(\bar z)\rangle+\langle t_1(\bar w),t_1(\bar z)\rangle \\
\end{pmatrix}, w\in\Omega.
\end{eqnarray*}
Setting $K_0(z,w)=\langle \gamma_0(\bar w),\gamma_0(\bar z)\rangle$
and $K_1(z,w)= \langle t_1(\bar w),t_1(\bar z)\rangle$, we see that the
reproducing kernel $K_{\Gamma}$ has the form:
\begin{equation} \label{main}
K_{\Gamma}(z,w)= 
\begin{pmatrix}
  {K_0}(z,w) & \frac{\partial}{\partial \bar w}{K_0}(z,w) \\
  \frac{\partial}{\partial z}{K_0}(z,w) & \frac{\partial^2}{\partial z\partial \bar w}{K_0}
  (z,w)+{K_1}(z,w) \\
\end{pmatrix}.
\end{equation}
  
All the irreducible homogeneous operators in $B_2(\mathbb D)$ belong to the class  $\mathcal F B_2(\mathbb D)$. An application of Theorem \ref{maint}  determines the curvature and the second fundamental form of these operators. It is then not hard to  identify these operators (up to unitary equivalence) as shown in \cite{misra-JJKM}.

\section{Some future directions and further thoughts}

In this second half of the paper, we discuss some of the topics mentioned only briefly in the first half. Along the way, we also mention some of the open problems in these topics. 

\subsection{Operators in the Cowen-Douglas class} 
In this section, we give a description of the local operators $\boldsymbol T_{| \mathcal N(w)}$, where 
$$\mathcal N(w) := \cap_{i,j=1}^m \ker(T_i - w_iI )(T_j - w_jI).$$ 
The matrix representation of these local operators contains one of the most important geometric invariant, namely, the curvature. Over the past three decades, this has been used to obtain various curvature inequalities. We also discuss a class of pure subnormal operators $T$ studied in detail by Abrahamse an Douglas, see \cite{misra-AD}. We describe briefly, a new realization of such operators of rank $1$ from the paper \cite{misra-Ramiz}, as multiplication operators on ordinary weighted Hardy spaces. For operators of rank $>1$, such a description perhaps  exists but has not been found yet.  

\subsubsection{Local operators} \label{locop}
Fix a $m$ - tuple of operators $\boldsymbol T$ in $\mathrm B_1(\Omega)$ and
let $N_{\boldsymbol T}(w)$ be the $m$-tuple of operators $(N_1(w),
\ldots, N_m(w)),$ where $N_i(w)= (T_{i}-w_{i}I)|_{\mathcal N(w)},$
$i=1,\ldots, m.$ Clearly, $N_i(w)N_j(w)=0$ for all $1\leq i,j \leq
m.$ 
Since $(T_i-w_iI)\gamma(w) =0$ and
$(T_i-w_iI) (\partial_j\gamma)(w) = \delta_{i,j} \gamma(w)$ for $1
\leq i,j \leq m$, we have  the matrix representation $N_k(w) = \left ( \begin{smallmatrix}
0 & e_k \\
 0 & \boldsymbol 0\\
\end{smallmatrix}\right ),$  where $e_k$ is the vector $(0,\ldots , 1, \ldots ,0)$ with the $1$ in the $k${\tt th} slot, $k=1,\ldots , m.$   
Representing $N_k(w)$ with respect to an orthonormal basis in
$\mathcal N(w)$, it is possible to read off the curvature of
$\boldsymbol T$ at $w$ using the relationship:
\begin{equation}\label{curvform}
\big (-\mathcal K_{\boldsymbol T}(w)^{\rm t}\big )^{-1}=\left( {\rm tr}\big ( N_{k}(w)\overline{N_{j}(w)}^{\rm t}
\big)\, \right)_{k,j=1}^m = A(w)^{\rm
t}\overline{A(w)},
\end{equation} 
where the $k${\tt th}-column of
$A(w)$ is the vector $\boldsymbol \alpha_k$ (depending on $w$)
which appears in the matrix representation of $N_k(w)$ with
respect to an appropriate choice of an orthonormal basis in $\mathcal N(w)$
which we describe below.

This formula is established for a pair of operators in $\mathrm
B_1(\Omega)$ (cf. \cite[Theorem 7]{misra-CD1}).  However, let us 
verify it for an $m$-tuple $\boldsymbol T$ in $\mathrm
B_1(\Omega)$ for any $m \geq1$ following \cite{misra-MisPal}. 

Fix $w_0$ in $\Omega$. We may assume without loss of generality
that $\|\gamma(w_0)\|=1$. The function $\langle \gamma(w),
\gamma(w_0)\rangle$ is invertible in some neighborhood of $w_0$.
Then setting $\hat{\gamma}(w):=  \langle \gamma(w),
\gamma(w_0)\rangle^{-1} \gamma(w)$, we see that
$$\langle \partial_k \hat{\gamma}(w_0), \gamma(w_0) \rangle = 0, \,\, k=1,2,\ldots, m.$$
Thus $\hat{\gamma}$ is another holomorphic section of $E$.  The
norms of the two sections $\gamma$ and $\hat{\gamma}$ differ by
the absolute square of a holomorphic function, that is
$$\tfrac{\|\hat{\gamma}(w)\|}{\|\gamma(w)\|} = |\langle \gamma(w),
\gamma(w_0)\rangle|.$$ 
Hence the curvature is independent of the
choice of the holomorphic section.
 Therefore, without loss of generality, we will prove the claim assuming,
for a fixed but arbitrary $w_0$ in $\Omega$, that
\begin{enumerate}
\item[{(i)}] $\|\gamma(w_0)\|=1$, \item[{(ii)}]
$\gamma(w_0)$ is orthogonal to $(\partial_k\gamma)(w_0)$,
$k=1,2,\ldots , m$.
\end{enumerate}

Let $G$ be the Grammian corresponding to the $m+1$ dimensional
space spanned by the vectors \index{Grammian}
$$\{\gamma(w_0),
(\partial_1\gamma)(w_0),  \ldots , (\partial_m \gamma)(w_0)\}.$$
This is just the space $\mathcal N(w_0)$. Let $v, w$ be any two
vectors in $\mathcal N(w_0)$. Find $\boldsymbol c=(c_0,\ldots,
c_m), \boldsymbol d=(d_0, \ldots, d_m)$ in $\mathbb C^{m+1}$ such
that $v=\sum_{i=0}^{m}c_i {\partial}_i\gamma(w_0)$ and
$w=\sum_{j=0}^{m}d_j{\partial}_j \gamma(w_0).$  Here
$(\partial_0\gamma)(w_0) = \gamma(w_0)$. We have
\begin{align*}\langle v, w\rangle&=\langle\sum_{i=0}^{m}c_i{\partial}_i\gamma(w_0),
\sum_{j=0}^{m}d_j {\partial}_j \gamma(w_0)\rangle\\
&= \langle G^{\rm t}(w_0)\boldsymbol c, \boldsymbol d\rangle_{\mathbb C^{m+1}}\\
&= \langle (G^{\rm t})^{\frac{1}{2}}(w_0)\boldsymbol c, (G^{\rm
t})^{\frac{1}{2}}(w_0)\boldsymbol d\rangle_{\mathbb C^{m+1}}.
\end{align*}
Let $\{\varepsilon_i\}_{i=0}^{m}$ be the standard orthonormal basis for
$\mathbb C^{m+1}$. Also, let $(G^{\rm
t})^{-\frac{1}{2}}(w_0)\varepsilon_i:=\boldsymbol \alpha_i(w_0)$, where
$\boldsymbol \alpha_i(j)(w_0) = \alpha_{j i}(w_0)$, $i=0,1,\ldots,
m$. We see that the vectors  $\varepsilon_i:=\sum_{j=0}^m
\alpha_{ji} (\partial_j \gamma)(w_0)$, $i=0,1, \ldots ,m,$ form an
orthonormal basis in $\mathcal N(w_0)$:  \begin{align*}\langle
\varepsilon_i, \varepsilon_l\rangle &=\big\langle
\sum_{j=0}^{m}\alpha_{j i}{\partial}_j \gamma(w_0),
\sum_{p=0}^{m}\alpha_{p l}{\partial}_p \gamma(w_0)\big\rangle\\&=
\langle(G^{\rm t})^{-\frac{1}{2}}(w_0)\boldsymbol \alpha_i, (G^{\rm
t})^{-\frac{1}{2}}(w_0)\boldsymbol
\alpha_l\rangle\\&=\delta_{il},\end{align*} where
$\delta_{il}$ is the Kronecker delta. Since $N_k\big
(\,(\partial_j\gamma)(w_0) \,\big )= \gamma(w_0)$ for $j=k$ and
$0$ otherwise, we have $N_k(\varepsilon_i) = \Big (
\begin{smallmatrix} 0 & \boldsymbol \alpha_k^{\rm t}\\ 0 & 0
\end{smallmatrix} \Big )$. Hence
\begin{align*}
{\rm tr}\big ( N_i(w_0) N_j^*(w_0) \big ) &=  \boldsymbol{\alpha_i}(w_0)^{\rm t} \overline{\boldsymbol \alpha}_j(w_0)\\
&= \big ( (G^{\rm t})^{-\frac{1}{2}}(w_0)\varepsilon_i\big )^{\rm t}\overline{\big ( (G^{\rm t})^{-\frac{1}{2}}(w_0)\varepsilon_j\big )}\\
&= \langle {G}^{-\frac{1}{2}}(w_0)\varepsilon_i , {G}^{-\frac{1}{2}}(w_0)\varepsilon_j
\rangle = {(G^{\rm t})}^{-1}(w_0)_{i,j}.
\end{align*}
Since the curvature, computed with respect to the holomorphic
section $\gamma$ satisfying the conditions {(i)} and
{(ii)}, is of the form
\begin{align*}
-\mathcal K_{\boldsymbol T}(w_0)_{i,j} &= \frac{\partial^2}{\partial w_i {\partial} \bar{w}_j}\log \|\gamma(w)\|^2_{|w=w_0}\\
&= \Big ( \frac{\|\gamma(w)\|^2 \big ( \frac{\partial^2
\gamma}{\partial w_i \partial\bar{w}_j} \big )(w) - \big
(\frac{\partial \gamma}{\partial w_i}\big )(w)  \big (
\frac{\partial \gamma }{\partial \bar{w}_j}\big ) (w)}
{\|\gamma(w)\|^4}\Big )_{|w=w_0}\\
&= \big ( \frac{\partial^2 \gamma}{\partial w_i \partial
\bar{w}_j} \big )(w_0)=G(w_0)_{i,j}, \end{align*} we have verified
the claim \eqref{curvform}.

The local description of the $m$ - tuple of operators $\boldsymbol T$ shows that the curvature is indeed obtained from the holomorphic frame and the first order derivatives using the Gram-Schmidt orthonormalization. The following theorem was proved for $m=2$ in ({cf. \cite[Theorem 7]{misra-CD1}}). However, for any natural number $m$, the proof is
evident from the preceding discussion. The case of a $m$ -tuple of operators in $B_n(\Omega)$ for an arbitrary $n \in \mathbb N$ is discussed in \cite{misra-MR}.
\index{Gram-Schmidt orthonormalization}
\begin{theorem}
Two $m$-tuples of operators $\boldsymbol T$ and
$\tilde{\boldsymbol T}$ in $\mathrm B_1(\Omega)$ are unitarily
equivalent if and only if $N_k(w)$ and $\tilde{N}_k(w),$ $1\leq k \leq m$, are
simultaneously unitarily equivalent for $w$ in some open subset of
$\Omega$. 
\end{theorem}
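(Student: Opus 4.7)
The plan is to handle the two directions separately, with the reverse direction reducing to the curvature-based rigidity already established in Proposition \ref{curvinv} via the identity \eqref{curvform}.

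For necessity, I would start from a unitary $U:\mathcal H\to\tilde{\mathcal H}$ intertwining $\boldsymbol T$ and $\tilde{\boldsymbol T}$, i.e.\ $UT_i = \tilde T_i U$ for $1\leq i\leq m$. Since the subspace $\mathcal N(w)=\bigcap_{i,j=1}^m\ker(T_i-w_iI)(T_j-w_jI)$ is defined purely in terms of the operator algebra, $U$ maps $\mathcal N(w)$ isometrically onto $\tilde{\mathcal N}(w)$. The intertwining $U(T_k-w_kI)=(\tilde T_k-w_kI)U$ restricts to this pair of finite dimensional subspaces, so $V(w):=U|_{\mathcal N(w)}$ is a unitary satisfying $V(w)N_k(w)V(w)^*=\tilde N_k(w)$ for all $k$ \emph{simultaneously}, and for every $w\in\Omega$.

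For sufficiency, assume there is an open set $\Omega_0\subseteq\Omega$ on which, for each $w\in\Omega_0$, a single unitary $V(w):\mathcal N(w)\to\tilde{\mathcal N}(w)$ conjugates $N_k(w)$ to $\tilde N_k(w)$ for all $k$. Then for every pair $k,j$,
\[
\tilde N_k(w)\tilde N_j(w)^*=V(w)N_k(w)N_j(w)^*V(w)^*,
\]
and taking traces gives ${\rm tr}\bigl(N_k(w)\overline{N_j(w)}^{\rm t}\bigr)={\rm tr}\bigl(\tilde N_k(w)\overline{\tilde N_j(w)}^{\rm t}\bigr)$ for all $w\in\Omega_0$. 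Inserting this into the formula \eqref{curvform} gives
\[
\bigl(-\mathcal K_{\boldsymbol T}(w)^{\rm t}\bigr)^{-1}=\bigl(-\mathcal K_{\tilde{\boldsymbol T}}(w)^{\rm t}\bigr)^{-1},\qquad w\in\Omega_0,
\]
so the curvature $(1,1)$-forms of the line bundles $E_{\boldsymbol T}$ and $E_{\tilde{\boldsymbol T}}$ agree on $\Omega_0$. By Proposition \ref{curvinv}, the bundles are locally holomorphically isometric on some neighbourhood, and the Cowen-Douglas theorem for commuting tuples quoted earlier in the excerpt then upgrades this bundle equivalence to a unitary equivalence of $\boldsymbol T$ and $\tilde{\boldsymbol T}$.

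The step I expect to require the most care is the passage from pointwise simultaneous unitary equivalence of the nilpotents to equality of the full curvature matrix: one must insist that the same $V(w)$ conjugate all $N_k(w)$ at once, since otherwise only the diagonal trace identities ${\rm tr}(N_k N_k^*)={\rm tr}(\tilde N_k\tilde N_k^*)$ would follow and the off-diagonal entries of the curvature matrix could be lost. Once this is recognized, the argument is essentially a book-keeping exercise in the orthonormal basis for $\mathcal N(w)$ constructed in the derivation of \eqref{curvform}, combined with the already-invoked geometric rigidity.
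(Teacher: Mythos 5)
Your proposal is correct and follows essentially the same route as the paper: the forward direction restricts the global intertwining unitary to $\mathcal N(w)$, and the converse uses the unitary invariance of the traces ${\rm tr}\big(N_k(w)N_j(w)^*\big)$ together with the identity \eqref{curvform} to deduce equality of curvatures, then invokes the curvature rigidity of line bundles to recover unitary equivalence of the tuples. Your added caution about needing a single $V(w)$ conjugating all the $N_k(w)$ simultaneously (so that the off-diagonal traces, hence the full curvature matrix, are preserved) is exactly the point the paper's computation relies on.
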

\begin{proof}
Let us fix an arbitrary point $w$ in $\Omega$. In what follows,
the dependence on this $w$ is implicit. Suppose that there exists
a unitary operator $U:\mathcal N\rightarrow \widetilde{\mathcal
N}$ such that $UN_i=\tilde{N_i}U$, $i= 1,\ldots,m.$
For $1 \leq i, j \leq m,$ we have
\begin{align*}{\rm tr}\big ( \tilde{N_i}
\tilde{N_j}^* \big )&={\rm tr}\big (\big(U N_iU^*\big)\big(U
N_jU^*\big)^* \big )\\&={\rm tr}\big ( UN_i N_j^* U^*\big
)\\&={\rm tr}\big ( N_i N_j^* U^*U\big )\\&={\rm tr}\big ( N_i
N_j^* \big ).
\end{align*}
Thus the curvature of the operators $\boldsymbol T$ and $
{\tilde{\boldsymbol T}}$ coincide making  them unitarily
equivalent proving the Theorem in one direction. In the other
direction, observe that if the operators $\boldsymbol T$ and $
{\tilde{\boldsymbol T}}$ are unitarily equivalent then this unitary  
must map $\mathcal N$ to $\tilde{\mathcal N}$. Thus the restriction of $U$ to the subspace
$\mathcal N$ intertwines $N_k$ and $\tilde{N}_k$ simultaneously
for $k=1,\cdots ,m$.
\end{proof}

\subsubsection{Pure subnormal operators} \index{pure subnormal operator}
\index{minimal normal extension}\index{bilateral shift}\index{pure isometry}
The unilateral shift $U_+$ or the multiplication $M$ by the coordinate function on the Hardy space $H^2(\mathbb D)$ is a very special kind of subnormal operator
in that it is a pure isometry. The spectrum of its minimal normal extension, namely, the bi-lateral shift $U$ or the operator of multiplication $M$ by the coordinate function on $L^2(\mathbb T)$  is the unit circle $\mathbb T = \partial \overbar{\mathbb D} = \partial \sigma(U_+)$. These two properties determine such an operator uniquely, that is, if a pair consisting of a  pure isometry $S$ and its minimal normal extension $N$ has the spectral inclusion property: $\sigma(N) \subseteq \partial \sigma(S)$, then $S$ must be unitarily equivalent to the direct sum of a number of copies of the operator $U_+$. The situation for the annulus is more complicated and was investigated in \cite{misra-DSannulus}. Following this, Abrahamse and Douglas initiated the study of a class of pure subnormal operators $S$ which share this property. Thus the spectrum $\sigma(S)$ of the operator  $S$ is assumed to be a subset of the closure $\overbar{\Omega}$ of a bounded domain $\Omega$  and the spectrum 
$\sigma(N)$ of its normal extension $N$ is contained in $\partial \sigma(S)$. 

Let $\Omega$ be a bounded domain in $\mathbb C$ and let $\mathcal O(\Omega)$ 
be the space of holomorphic functions on $\Omega$.  For $f\in \mathcal O(\Omega)$, 
the real analytic function $|f|$ is subharmonic and hence admits a least harmonic majorant, which is the function   \index{least harmonic majorant}
 $$
 u_f(z):=\inf  \{u(z) : |f| \leq u, u \mbox{\rm~ is harmonic on}\, \,\Omega\},  
 $$
that is, $u_f$ is either harmonic or infinity throughout $\Omega$.

Fix a point $w\in \Omega$. The Hardy space on $\Omega$ is  defined to be the Hilbert space 
$$
H^2(\Omega) :=\{ f\in \mathcal O(\Omega): u_f(w) < \infty\}.
$$
It is easily verified that $\|f\|:= u_f(w)$ defines a norm and a different choice of $w\in \Omega$ induces an equivalent topology. 
Let ${\mathcal O}_{\mathfrak h}(\Omega)$ denote the space of holomorphic functions $f$ defined on $\Omega$ taking values in some Hilbert space $\mathfrak h$. Again, $z\mapsto \|f(z)\|_{\mathfrak h}$ is a subharmoinic function and admits a least harmonic majorant 
$u_f$.  We define the Hardy space 
$$H^2_{\mathfrak h}(\Omega):=\{f\in {\mathcal O}_{\mathfrak h}(\Omega): \|f\|_w := u_f(w) < \infty\}$$
as before.  \index{harmonic measure}
Let $m$ be the harmonic measure relative to the point $w\in \Omega$ and let $L^2(\partial \Omega, m)$ denote the space of square integrable functions defined on 
$\partial \Omega$ with respect to $m$.  The closed subspace 
$$H^2(\partial \Omega):= \{ f \in L^2(\partial \Omega, m): \int_{\partial \Omega}  f g\, dm = 0, g\in \mathcal A(\Omega) \},$$
where $\mathcal A(\Omega)$ is the space of all functions which are holomorphic in some open neighbourhood of the closed set $\overline{\Omega}$, is the Hardy space of $\partial \Omega$.   
Let $L^2_{\mathfrak h}(\partial \Omega)$ denote the space of square integrable functions, with respect to the measure $m$, taking values in the Hilbert space $\mathfrak h$.  
The Hardy space $H^2_{\mathfrak h}(\partial \Omega)$ is the closed subspace of  
$L^2_{\mathfrak h}(\partial \Omega)$ consisting of those functions in 
$L^2_{\mathfrak h}(\partial \Omega)$ for which $\int_{\partial \Omega}  f g\, dm = 0$, $g\in \mathcal A(\Omega)$.  

\emph{Boundary values.} 
A function $f$ in the Hardy space admits a boundary value 
$\hat{f}$.  This means that the $\lim_{z\to \lambda} f(z)$ exists (almost everywhere relative to $m$)  as $z$  approaches  $\lambda\in \partial \Omega$ through any non-tangential path in $\Omega$.   
Define the function $\hat{f}:\partial \Omega\to \mathbb C$ by setting $\hat{f}(\lambda) = \lim_{z\to \lambda} f(z)$.  It then follows that the map $f \mapsto \hat{f}$ is an isometric isomorphism between the two Hardy spaces $H^2(\Omega)$ and $H^2(\partial \Omega)$, see \cite{misra-RudinHp}. This correspondence works for the case of  
$H_{\mathfrak h}^2(\Omega)$ and $H_{\mathfrak h}^2(\partial \Omega)$ as well.

A topological space $E$ is said to be a vector bundle of rank $n$ over $\Omega$ if there is a continuous map $p:E \to \Omega$  such that $E_z:=p^{-1}(z)$ is a linear space. A co-ordinate chart relative to  an open cover $\{U_s\}$ of the domain $\Omega$ is a set of homeomorphisms $\varphi_s:U_s\times\mathfrak h \to p^{-1}(U_s)$ such that 
$\varphi_s(z,k)$ is in $E_z$ for any fixed $z\in U_s$, the restriction  $\varphi_s(z):={\varphi_s|}_{\{z\}\times \mathfrak h}$  is a continuous linear isomorphism.  Therefore, $\varphi_{st}(z) : = \varphi^{-1}_s(z) \varphi_t(z)$ is in ${\rm GL}(\mathfrak h)$, $z\in U_s\cap U_t$. If the transition functions $\varphi_{st}$  are holomorphic for some choice of co-ordinate functions $\varphi_s$, then the vector bundle $E$ is said to be a holomorphic vector bundle. A section $f$ of $E$ is said to be holomorphic if $\varphi_s^{-1}f $ is a holomorphic function from $U_s$ to $\mathfrak h$. Finally, if $\varphi_s^{-1}(z) \varphi_t(z)$ are chosen to be unitary, $z\in U_s\cap U_t$, then one says that $E$ is a flat unitary vector bundle.  Thus for a flat unitary vector bundle $E$,  we have that 
$$\|\varphi_s(z)^{-1}f(z)\|_{\mathfrak h} =  \|\varphi_t(z)^{-1}f(z)\|_{\mathfrak h},
\quad z\in U_s\cap U_t.$$ 
This means that the functions 
$$h^{(s)}_f(z):= \|\varphi_s(z)^{-1}f(z)\|_{\mathfrak h},\quad z\in U_s,$$ 
agree on the overlaps and therefore define a function $h^E_f$ on all of $\Omega$.  
Since $z\mapsto \varphi_s(z)^{-1}f(z)$ is holomorphic and subharmonicity is a local property, it follows that $h_f$ is subharmonic on $\Omega$.  As before, the function $h^E_f$ admits a least harmonic majorant $u^E_f$ and the Hardy space $H_E^2(\Omega)$ is the subspace of holomorphic sections $f$ of $E$ for which 
$$\|f\|^E_w:=u^E_f(w) < \infty$$ 
for some fixed but arbitrary point $w\in \Omega$. 

A function $f$ which is bounded and holomorphic on $\Omega$ defines a bounded linear operator $M_f$ on the Hardy space $H_{\mathfrak h}^2(\Omega)$ via pointwise multiplication.    Let $T_{\mathfrak h}$ denote the multiplication by the coordinate function $z$.  It is then evident that $T_{\mathfrak h}$ is unitarily equivalent to the tensor product $T\otimes I_{\mathfrak h}$, where $T$ is the multiplication induced by the coordinate function $z$ on the Hardy space $H^2(\Omega)$.  
The Hardy space $H^2_E(\Omega)$ is also invariant under multiplication by any function which is holomorphic and bounded on $\Omega$. In particular, let $T_E$ be the operator of multiplication by $z$ on the Hardy space $H^2_E(\Omega)$. All the three theorems listed below are from \cite{misra-AD}. 
\begin{theorem}
If $\mathfrak h$ is a Hilbert space and $E$ is a flat unitary vector bundle over $\Omega$ of rank $\dim \mathfrak h$, then $T_E$ is similar to $T_{\mathfrak h}$, that is there is a bounded invertible operator $L:H_{\mathfrak h}^2(\Omega) \to H^2_E(\Omega)$ with the property $L^{-1}T_EL=T_{\mathfrak h}$.   
\end{theorem}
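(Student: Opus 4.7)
The plan is to produce a global holomorphic trivialization of $E$ and use it to define the similarity $L$ by pointwise application. Since $\Omega$ is a planar domain, hence a one-dimensional Stein manifold, Grauert's theorem (the Oka principle) forces every holomorphic vector bundle over $\Omega$ to be holomorphically trivial. Therefore there exists a holomorphic bundle isomorphism $\Phi: \Omega\times\mathfrak h \to E$; equivalently, a holomorphic section $\Phi$ of $\mathrm{Hom}(\underline{\mathfrak h}, E)$ such that each fiber map $\Phi(z):\mathfrak h \to E_z$ is a linear isomorphism. Note that such a $\Phi$ need not be a unitary bundle map; it is only a holomorphic trivialization.

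Next, I would define $L: H^2_{\mathfrak h}(\Omega) \to H^2_E(\Omega)$ by the pointwise rule $(Lf)(z) = \Phi(z)f(z)$, with inverse $(L^{-1}g)(z) = \Phi(z)^{-1}g(z)$, and verify the intertwining relation directly:
\[
(LT_{\mathfrak h}f)(z) = \Phi(z)\bigl(zf(z)\bigr) = z\,\Phi(z)f(z) = (T_E L f)(z), \qquad f\in H^2_{\mathfrak h}(\Omega).
\]
So $L T_{\mathfrak h} = T_E L$ as soon as $L$ is shown to map the two Hardy spaces boundedly onto each other.

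The heart of the argument is then the two-sided norm comparison. In any flat unitary chart $\varphi_s:U_s\times\mathfrak h \to p^{-1}(U_s)$ of $E$, the function $A_s(z):=\varphi_s(z)^{-1}\Phi(z)\in \mathrm{GL}(\mathfrak h)$ is holomorphic, and $\|(Lf)(z)\|_{E,z}=\|A_s(z)f(z)\|_{\mathfrak h}$ on $U_s$. Because the transition functions $\varphi_{st}$ are unitary, the operator norms $M(z):=\|A_s(z)\|$ and $m(z):=\|A_s(z)^{-1}\|^{-1}$ are independent of the chart and therefore define global continuous positive functions on $\Omega$ satisfying
\[
m(z)\,\|f(z)\|_{\mathfrak h} \;\leq\; \|(Lf)(z)\|_{E,z} \;\leq\; M(z)\,\|f(z)\|_{\mathfrak h}.
\]
If one can establish uniform bounds $0 < c_1 \leq m(z) \leq M(z) \leq c_2 < \infty$ on $\Omega$, then the pointwise inequalities pass to the subharmonic majorants: since multiplying a subharmonic function by a positive constant preserves least harmonic majorants, one obtains $c_1\|f\|^{\mathfrak h}_w \leq \|Lf\|^E_w \leq c_2\|f\|^{\mathfrak h}_w$, giving boundedness of $L$ and $L^{-1}$.

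The main obstacle I anticipate is precisely the uniform boundedness of $m$ and $M$ on $\Omega$, since $\Phi$ is only produced abstractly by Grauert's theorem and, a priori, nothing prevents $\|A_s\|$ from blowing up near $\partial\Omega$. This is where the flat unitary hypothesis must really be used: because the monodromy representation of $\pi_1(\Omega)$ lands in the unitary group, one can lift the situation to the universal cover $\widetilde\Omega$, where $E$ becomes the trivial bundle with the standard metric and sections of $H^2_E(\Omega)$ correspond to $\rho$-equivariant holomorphic functions with values in $\mathfrak h$. The holomorphic trivialization $\Phi$ pulls back to a single-valued holomorphic $\mathrm{GL}(\mathfrak h)$-valued function on $\widetilde\Omega$ whose deck-translates differ by the unitary monodromy, which controls $\|A_s\|$ and $\|A_s^{-1}\|$ uniformly and brings the weights $m$, $M$ under control. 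Once this quantitative step is completed, the similarity $L$ is established, and the full statement $L^{-1}T_E L = T_{\mathfrak h}$ follows.
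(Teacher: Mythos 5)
The paper itself offers no proof of this theorem: it is one of three results quoted verbatim from Abrahamse and Douglas \cite{misra-AD}, so there is no internal argument to compare yours against. Judged on its own terms, your plan has the right architecture --- define $L$ by pointwise application of a holomorphic trivialization $\Phi$ of $E$, observe that the intertwining $LT_{\mathfrak h}=T_EL$ is then automatic, and reduce everything to a two-sided uniform bound on $\Phi$ --- and you correctly locate the difficulty in that bound.

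However, the step you propose to close the gap does not work. A trivialization supplied by Grauert's theorem carries no control near $\partial\Omega$, and the observation that its lift to the universal cover is equivariant for the unitary monodromy representation $\rho$ does not constrain its size: if $\Phi$ is any such equivariant trivialization and $g$ is a single-valued holomorphic function on $\Omega$ unbounded near the boundary, then $e^{g}\Phi$ is again a holomorphic trivialization whose deck-translates differ by exactly the same unitaries, yet the associated weights $m$ and $M$ degenerate. So unitarity of the monodromy cannot by itself ``bring the weights under control''; uniform boundedness is not a property of an arbitrary trivialization but must be built into a specific one, and constructing it is the entire content of the theorem. The construction uses that $\Omega$ is finitely connected, so $\pi_1(\Omega)$ is free on generators corresponding to the inner boundary components; one writes each monodromy unitary as $U_j=\exp(iA_j)$ with $A_j$ self-adjoint and bounded, and assembles $\Phi$ from exponentials of $(\omega_j+i\tilde\omega_j)A_j$, where $\omega_j$ is the harmonic measure of the $j$-th boundary component and $\tilde\omega_j$ its (multi-valued) conjugate. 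Since $0\le\omega_j\le 1$ and the automorphy factors are unitary, $\Phi$ and $\Phi^{-1}$ are uniformly bounded. (A further wrinkle when $\dim\mathfrak h>1$ is that the $A_j$ need not commute, which is precisely why only similarity, not unitary equivalence, survives.) Without this construction your argument establishes the algebraic intertwining but not the boundedness of $L$ and $L^{-1}$, which is the actual assertion.
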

\begin{theorem}
The operators $T_E$ and $T_F$ are unitarily equivalent if and only if $E$ and $F$ are equivalent as flat unitary vector bundles.
\end{theorem}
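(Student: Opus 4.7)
The plan is to handle sufficiency in a line and devote the real work to necessity. A flat unitary equivalence $\Phi: E \to F$ is holomorphic with fibrewise unitary values, so pointwise composition $f \mapsto \Phi \circ f$ preserves $\|f(z)\|_{\mathfrak h}$ at every $z$, hence also the least harmonic majorant $u_f$. It therefore defines an isometric isomorphism $H^2_E(\Omega) \to H^2_F(\Omega)$, and this map intertwines $T_E$ and $T_F$ since $\Phi$ does not act on the variable $z$.

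For the converse, my strategy is to funnel the intertwining through the Cowen-Douglas classification. Let $U: H^2_E(\Omega) \to H^2_F(\Omega)$ be unitary with $U T_E = T_F U$. I first verify that $T_E^* \in B_n(\Omega^*)$ with $n = \mathrm{rank}(E)$. Using the reproducing kernel $K^E$ of $H^2_E(\Omega)$, the sections $K^E(\cdot, w) e$ with $e \in E_w$ provide an $n$-dimensional basis of $\ker(T_E^* - \bar w I)$ whose total span is dense in $H^2_E(\Omega)$, while closedness of $\mathrm{ran}(T_E - w I)$ follows from a standard Fredholm argument on Hardy spaces; the same analysis applies to $T_F^*$.

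Since $U$ intertwines $T_E^*$ and $T_F^*$, the Cowen-Douglas theorem recalled earlier produces an equivalence of the holomorphic Hermitian vector bundles $E_{T_E^*}$ and $E_{T_F^*}$ over some open subset of $\Omega^*$. Composing with the canonical identifications $E_w \to \ker(T_E^* - \bar w I)$, $e \mapsto K^E(\cdot, w) e$, and similarly for $F$, converts this into a holomorphic bundle isomorphism $\Phi : E \to F$ characterised by
\[
U\bigl(K^E(\cdot, w) e\bigr) = K^F(\cdot, w)\, \Phi(w) e, \qquad w \in \Omega,\ e \in E_w.
\]
Testing the unitarity of $U$ on pairs of kernel sections yields the fibrewise identity
\[
\langle K^E(w, w) e_1, e_2\rangle_{E_w} = \langle K^F(w, w) \Phi(w) e_1, \Phi(w) e_2\rangle_{F_w}.
\]

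The main obstacle I expect is the last step: upgrading this Cowen-Douglas (reproducing-kernel) equivalence to an equivalence of flat unitary structures. For a flat unitary bundle, the diagonal kernel $K^E(w, w)$ should reduce on each fibre to the scalar $S_\Omega(w, w)$ times the identity, with the identical scalar appearing for $F$; if so, it cancels from both sides of the displayed identity and renders $\Phi(w)$ fibrewise unitary in the original flat unitary metrics. Once this is in hand, I finish by the following rigidity: a holomorphic, fibrewise unitary map between flat unitary bundles is represented, in a pair of flat unitary trivialisations over a connected chart, by a holomorphic function valued in the compact real form $U(n)$; any such function is constant, so $\Phi$ is locally constant in flat unitary coordinates and hence an equivalence of $E$ and $F$ as flat unitary vector bundles.
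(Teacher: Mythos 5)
Your sufficiency argument is fine: a flat unitary equivalence preserves the functions $h^E_f$ built from the flat trivialisations, hence the least harmonic majorants, hence gives an isometry of $H^2_E(\Omega)$ onto $H^2_F(\Omega)$ commuting with multiplication by $z$. Note, for context, that the survey offers no proof of this theorem at all --- it is quoted from Abrahamse--Douglas --- so your converse has to stand on its own, and it does not. The step you yourself flag as the main obstacle is not merely unproven; it is false. For a nontrivial flat unitary line bundle $E$ over a multiply connected $\Omega$ (equivalently, a nontrivial character $\alpha\in\hat G$), the diagonal of the reproducing kernel of $H^2_E(\Omega)$ is \emph{not} $S_\Omega(w,w)$: the kernels $K_\alpha(w,w)$ genuinely depend on $\alpha$. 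Indeed, if your claimed reduction held, every rank-one bundle shift would have curvature $-\partial\bar\partial\log S_\Omega(w,w)$, so by the Cowen--Douglas theorem all of them would be unitarily equivalent --- contradicting the very statement you are proving, since inequivalent flat unitary line bundles over a multiply connected domain exist in abundance (they are parametrised by $\hat G$). The survey itself points at this wall: the relationship between the flat unitary bundle $E$ and the holomorphic Hermitian bundle $\mathcal E$ of $T_E^*$ is described there as one of the main unanswered questions, and the discussion of Suita's theorem and of which bundle shifts are extremal only makes sense because these kernel metrics differ from character to character.

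So the Cowen--Douglas funnel, while it correctly produces a holomorphic bundle map $\Phi$ with $\langle K^E(w,w)e_1,e_2\rangle=\langle K^F(w,w)\Phi(w)e_1,\Phi(w)e_2\rangle$, leaves you with unitarity of $\Phi(w)$ only for the kernel-induced Hermitian metrics, and you have no bridge from those to the flat metrics. The argument that actually works (and is the one in Abrahamse--Douglas) avoids this entirely: one first proves a commutant-lifting--type lemma showing that \emph{every} bounded operator intertwining $T_E$ and $T_F$ is multiplication by a bounded holomorphic bundle map $\Phi\colon E\to F$; applying this to a unitary $U$ and to $U^{-1}=U^*$ yields $\Phi$ and $\Psi$ with $\|\Phi\|_\infty\le 1$, $\|\Psi\|_\infty\le 1$, $\Psi\Phi=I$, which forces $\Phi$ to have unitary boundary values and then, by your (correct) rigidity observation that a holomorphic fibrewise-isometric map between flat unitary bundles is locally constant in flat coordinates, to be a flat unitary equivalence. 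If you want to salvage your route, that intertwiner-representation lemma is the missing ingredient you would need to prove; the reproducing-kernel identity alone will not get you there.
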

\begin{theorem} 
If $E$ is a flat unitary vector bundle of dimension $n$, then for $z\in \Omega$, the dimension of the kernel of $(T_E-z)^*$ is $n$.
\end{theorem}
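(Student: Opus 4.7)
The plan is to reduce to the trivial bundle case $E = \Omega \times \mathfrak h$ by exploiting the similarity furnished by the first of the three theorems quoted above, and then compute the resulting kernel using the scalar Szegő kernel of $H^2(\Omega)$.

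First, I invoke the similarity theorem to obtain a bounded invertible $L \colon H^2_{\mathfrak h}(\Omega) \to H^2_E(\Omega)$ with $L^{-1} T_E L = T_{\mathfrak h}$. Taking adjoints yields $(T_E - z)^* = (L^{-1})^* (T_{\mathfrak h} - z)^* L^*$, and since $L^*$ is itself a bounded linear bijection, it restricts to a linear isomorphism $\ker(T_E - z)^* \to \ker(T_{\mathfrak h} - z)^*$. Thus the two kernels share the same dimension, and the problem reduces to computing $\dim \ker(T_{\mathfrak h} - z)^*$.

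Second, I identify $H^2_{\mathfrak h}(\Omega)$ unitarily with the Hilbert space tensor product $H^2(\Omega) \otimes \mathfrak h$ by fixing an orthonormal basis $\{e_i\}$ of $\mathfrak h$ and sending $\sum_i f_i \otimes e_i$ to the section $z \mapsto \sum_i f_i(z) e_i$. Under this identification, $T_{\mathfrak h}$ becomes $T \otimes I_{\mathfrak h}$, where $T$ is multiplication by the coordinate on the scalar Hardy space $H^2(\Omega)$, so
\[
\ker (T_{\mathfrak h} - z)^* \;\cong\; \ker (T - z)^* \otimes \mathfrak h.
\]
For the scalar Hardy space $\dim \ker(T - z)^* = 1$, since $H^2(\Omega)$ carries the Szegő reproducing kernel $S_\Omega$, and $S_\Omega(\cdot, z)$ spans $\ker(T - z)^*$: it is an eigenvector of $T^*$ with eigenvalue $\bar z$, and the closed range of $T - z$ has codimension one, coinciding with the hyperplane of functions in $H^2(\Omega)$ vanishing at $z$. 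Combining these facts,
\[
\dim \ker (T_E - z)^* \;=\; \dim \ker (T_{\mathfrak h} - z)^* \;=\; 1 \cdot \dim \mathfrak h \;=\; n.
\]

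The main technical point I expect to have to verify carefully is the tensor-product identification $H^2_{\mathfrak h}(\Omega) \cong H^2(\Omega) \otimes \mathfrak h$, because the vector-valued Hardy norm is defined via the least harmonic majorant of $\|f(z)\|_{\mathfrak h}$ rather than coordinatewise. The isometry should follow by first checking the claim on finite sums $\sum_i f_i e_i$ with $f_i \in H^2(\Omega)$, using the defining property of the least harmonic majorant and the scalar reproducing kernel identity, and then passing to general elements by density. Once this identification is in place, the remainder of the argument is a direct application of the quoted similarity theorem together with the standard reproducing kernel fact about $H^2(\Omega)$.
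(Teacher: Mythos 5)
Your proof is correct, but there is nothing in the survey to compare it against: this statement is one of the three theorems quoted verbatim from Abrahamse--Douglas \cite{misra-AD}, and no proof is given in the text. Your route --- transport the problem along the similarity $L^{-1}T_EL=T_{\mathfrak h}$ supplied by the first quoted theorem, note that $L^*$ restricts to a linear isomorphism of $\ker(T_E-z)^*$ onto $\ker(T_{\mathfrak h}-z)^*$, and then compute the latter as $\ker(T-z)^*\otimes\mathfrak h$ --- is sound, and the identification $H^2_{\mathfrak h}(\Omega)\cong H^2(\Omega)\otimes\mathfrak h$ carrying $T_{\mathfrak h}$ to $T\otimes I_{\mathfrak h}$ is exactly what the text declares ``evident'' a few lines earlier, so you may simply invoke it; if you do want to verify it, the cleanest way is on the boundary, where $H^2_{\mathfrak h}(\partial\Omega)$ sits inside $L^2(\partial\Omega,m)\otimes \mathfrak h$ and the scalar isometry $f\mapsto\hat f$ does the work. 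Two points deserve explicit attention. First, in the scalar step the existence of the reproducing kernel $S_\Omega(\cdot,z)$ only gives $\dim\ker(T-z)^*\geq 1$; to get equality you must know that $\operatorname{ran}(T-z)$ is dense in $\{f\in H^2(\Omega):f(z)=0\}$, which requires the division lemma: if $f\in H^2(\Omega)$ and $f(z)=0$, then $g(w)=f(w)/(w-z)$ again lies in $H^2(\Omega)$, since $|g|^2$ is dominated off a small disc about $z$ by a constant multiple of the harmonic majorant of $|f|^2$ and is bounded on that disc, so the sum of these two majorants is a harmonic majorant of $|g|^2$ on all of $\Omega$. Second, there is no circularity in leaning on the similarity theorem, since in \cite{misra-AD} that result is established independently of the present one; your argument is then a legitimate derivation of the kernel-dimension statement from it.
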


The operator $T_E^*$ on $H_E^2(\Omega)$, where $E$ is a flat unitary line bundle is in $B_1(\Omega^*)$, see \cite[Corollary 2.1]{misra-GM}. Consequently, it defines  a holomorphic Hermitian line bundle, say $\mathcal E$. One of the main questions that remains unanswered is the relationship between the flat unitary bundle $E$ and the holomorphic Hermitian bundle $\mathcal E$. Thus we are asking which operators in $B_1(\Omega)$ are   pure subnormal operator with the spectral inclusion property and conversely, how to find a model for such an operator in $B_1(\Omega)$.  Any answer involving the intrinsic complex geometry will be very interesting. In spite of the substantial work in \cite{misra-DY} and \cite{misra-LiChen}, this question remains elusive. We discuss this a little more in Section \ref{CurvInR}.  Also, the question of commuting tuples of subnormal operators has not been addressed yet. These are very interesting directions for future research. 

\index{covering map}\index{automorphism}
There is yet another way, which exploits the covering map $\pi:\mathbb D \to \Omega$, to define the Hardy space on $\Omega$.  
Let $G$ be the group of deck transformations for the covering map $\pi$.  Thus $G$ consists of those bi-holomorphic automorphisms $\varphi:\mathbb D \to \mathbb D$ satisfying $\pi\circ \varphi = \pi$.  A function $f$ on $\mathbb D$ is said to be $G$-automorphic if $f\circ \varphi = f$ for $\varphi \in G$.  A function $f$ defined on $\mathbb D$ is $G$ -automorphic if and only if it is of the form $g\circ \pi$ for some function $g$ defined on $\Omega$.   

Let $w=\pi(0)$ and $m$ be the harmonic measure on $\partial \Omega$ relative to $w$.
This means that $m$ is supported on $\partial \Omega$ and if $u$ is a function which is continuous on $\overline{\Omega}$ and harmonic on $\Omega$, then  
$$u(w) = \int_{\partial \Omega} u\,dm.$$  
For any $f\in L^1(\partial \Omega, m)$, it may be shown that 
$$\int_{\partial \Omega} f\,dm = \int_{\partial \mathbb D} f\circ \pi d\mu,$$ 
where $\mu$ is the Lebesgue measure on $\partial \mathbb D$.  Thus $m=\mu\circ \pi^{-1}$.

The Hilbert spaces $L^2_{\mathfrak h}(\partial \Omega)$,  $H^2_{\mathfrak h}(\Omega)$ and $H^2_{\mathfrak h}(\partial \Omega)$ now lift to closed subspaces of the respective Hilbert spaces on the unit disc $\mathbb D$ via the map $f \mapsto f\circ \pi$.

The dual $\hat{G}$ of the  group $G$ is the group of homomorphisms from $G$ into the circle group $\mathbb T$.  For each character $\alpha \in \hat{G}$, define the Hardy space 
$$
H^2_\alpha(\mathbb D):= \{f \in H^2(\mathbb D): f\circ \varphi = \alpha(\varphi) f \}.
$$
This is a closed subspace of $H^2(\mathbb D)$ which is invariant under any function which is  $G$ - automorphic,  holomorphic and bounded on  $\mathbb D$. The quotient map $\pi$ is such a function and we set $T_\alpha$ to be the operator of multiplication by $\pi$ on $H^2_\alpha(\mathbb D)$.   

We can define vector valued Hardy spaces $H^2_\alpha(\mathbb D, \mathfrak h)$ in a similar manner except that $\alpha$ is in $\mbox{\rm Hom}(G, \mathcal U(\mathfrak h))$, the group of unitary representations of $G$ on the Hilbert space $\mathfrak h$.  The operator $T_\alpha$ on $H^2_\alpha(\mathbb D, \mathfrak h)$ is multiplication by  $\pi$ as before.  
Since $G$, in this case, is isomorphic to the fundamental group of $\Omega$, there is a bijective correspondence between equivalence classes of the unitary representations of $G$ on $\mathfrak h$  and the equivalence classes of flat unitary vector bundles on $\Omega$. 
Again, the two theorems stated below are from \cite{misra-AD}.

\begin{theorem}
The operators $T_\alpha$ and $T_\beta$ are unitarily equivalent if and only if $\alpha$ and $\beta$ are equivalent representations.  
\end{theorem}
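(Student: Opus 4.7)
The plan is to deduce this theorem from the earlier theorem that $T_E$ and $T_F$ are unitarily equivalent if and only if $E$ and $F$ are equivalent as flat unitary vector bundles, by functorially identifying the covering-space operator $T_\alpha$ with the bundle operator $T_{E_\alpha}$ associated to $\alpha$. The forward direction is easy: given a unitary $U\in\mathcal U(\mathfrak h)$ with $U\alpha(\varphi)=\beta(\varphi)U$ for every $\varphi\in G$, the pointwise action $(Vf)(z):=Uf(z)$ sends $\alpha$-automorphic functions to $\beta$-automorphic ones, is isometric because $\|Uf(z)\|_{\mathfrak h}=\|f(z)\|_{\mathfrak h}$ for every $z\in\mathbb D$ (so $V$ preserves both the subharmonic function $\|f(\cdot)\|_{\mathfrak h}$ and its least harmonic majorant), and plainly intertwines multiplication by $\pi$, yielding $T_\alpha\cong T_\beta$.

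For the converse, I would form the associated flat unitary bundle $E_\alpha:=(\mathbb D\times\mathfrak h)/G$, where $G$ acts by $\varphi\cdot(z,v):=(\varphi(z),\alpha(\varphi)v)$, and define $J_\alpha:H^2_\alpha(\mathbb D,\mathfrak h)\to H^2_{E_\alpha}(\Omega)$ by $(J_\alpha f)(\pi(z)):=[z,f(z)]$. The $\alpha$-automorphy of $f$ is precisely what makes the right-hand side a well-defined function on $\Omega$: if $z'=\varphi(z)$ then $[z',f(z')]=[\varphi(z),\alpha(\varphi)f(z)]=[z,f(z)]$. A local holomorphic section $s$ of $\pi$ over $U\subseteq\Omega$ induces a trivialization $\varphi_s(w,v)=[s(w),v]$ for which $\|\varphi_s^{-1}(J_\alpha f)(w)\|_{\mathfrak h}=\|f(s(w))\|_{\mathfrak h}$; hence the bundle subharmonic function $h^{E_\alpha}_{J_\alpha f}$ on $\Omega$ lifts through $\pi$ to $z\mapsto\|f(z)\|_{\mathfrak h}$ on $\mathbb D$. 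Since this lifted function is $G$-invariant (by $\alpha$-automorphy), its least harmonic majorant on $\mathbb D$ is also $G$-invariant and therefore descends to the least harmonic majorant of $h^{E_\alpha}_{J_\alpha f}$ on $\Omega$; evaluating at $w=\pi(0)$ gives an isometric identification, and $J_\alpha$ intertwines $T_\alpha$ (multiplication by $\pi$) with $T_{E_\alpha}$ (multiplication by the coordinate function on $\Omega$).

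Once this identification is in place, a unitary equivalence $T_\alpha\cong T_\beta$ transfers to $T_{E_\alpha}\cong T_{E_\beta}$, the earlier theorem from \cite{misra-AD} gives $E_\alpha\cong E_\beta$ as flat unitary bundles, and the stated bijective correspondence between equivalence classes of unitary representations of $G$ on $\mathfrak h$ and equivalence classes of flat unitary bundles on $\Omega$ then yields $\alpha\cong\beta$. The main obstacle is the descent-of-majorants step in the second paragraph: one needs uniqueness of the $G$-invariant least harmonic majorant and the fact that $G$-invariant harmonic functions on $\mathbb D$ are exactly pullbacks via $\pi$ of harmonic functions on $\Omega$. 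Everything else is essentially formal once the associated bundle construction is set up carefully.
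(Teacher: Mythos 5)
The paper does not prove this statement at all: it is quoted verbatim from Abrahamse--Douglas \cite{misra-AD} (the text says explicitly ``the two theorems stated below are from \cite{misra-AD}''), so there is no internal proof to compare against. Your derivation is nevertheless coherent and correct in outline. The forward direction is right: conjugating pointwise by a unitary intertwiner $U$ preserves $\|f(z)\|_{\mathfrak h}$ pointwise, hence the subharmonic function and its least harmonic majorant, and commutes with multiplication by the scalar $\pi(z)$. For the converse, you are in effect re-proving the \emph{next} theorem quoted in the paper (that $T_{E_\alpha}\cong T_\alpha$ when $E_\alpha$ is the flat bundle determined by $\alpha$) and then composing it with the flat-bundle classification theorem and the bijection between characters and flat unitary bundles; the descent-of-majorants step you flag is indeed the only real analytic content, and your argument for it (the least harmonic majorant of a $G$-invariant subharmonic function is $G$-invariant by minimality plus uniqueness, and $G$-invariant harmonic functions on $\mathbb D$ descend through the local biholomorphism $\pi$) is sound. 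Two caveats are worth recording. First, since all three statements are imported from \cite{misra-AD}, your derivation is only non-circular if the flat-bundle theorem ($T_E\cong T_F$ iff $E\cong F$) is proved there independently of the character picture; in the original source the two descriptions are developed in tandem, and the standard direct proof of the present statement instead analyzes a unitary intertwiner of $T_\alpha$ and $T_\beta$ as a multiplication operator by a modulus-automorphic operator-valued function and shows it must be a constant unitary conjugating $\alpha$ into $\beta$. Second, you should note explicitly that the associated-bundle construction $\alpha\mapsto E_\alpha=(\mathbb D\times\mathfrak h)/G$ realizes precisely the asserted bijection between equivalence classes of representations and of flat unitary bundles, so that $E_\alpha\cong E_\beta$ really does force $\alpha\sim\beta$. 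With those points acknowledged, the argument is a legitimate alternative route given the other results quoted in the paper.
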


\begin{theorem}
If $E$ is the flat unitary vector bundle on $\Omega$ determined by the representation $\alpha$, then the operator $T_E$ is unitarily equivalent to  the operator $T_\alpha$.      
\end{theorem}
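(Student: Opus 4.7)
The plan is to construct an explicit unitary $U\colon H^2_E(\Omega)\to H^2_\alpha(\mathbb D,\mathfrak h)$ by lifting sections of $E$ through the covering map $\pi$. Since $\mathbb D$ is simply connected, the pullback bundle $\pi^*E$ is holomorphically trivial, and because $E$ is precisely the bundle associated to the representation $\alpha$ (i.e.\ $E=\mathbb D\times_\alpha\mathfrak h$), every holomorphic section $f$ of $E$ lifts uniquely to a holomorphic function $\tilde f\colon\mathbb D\to\mathfrak h$ satisfying the automorphy relation $\tilde f\circ\varphi=\alpha(\varphi)\tilde f$ for every $\varphi\in G$. Conversely, any such automorphic function on $\mathbb D$ descends to a well-defined holomorphic section of $E$ on $\Omega$. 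I define $Uf:=\tilde f$.

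Next I would verify that $U$ is isometric onto the Hardy subspace. The unitarity of $\alpha$ forces $\zeta\mapsto\|\tilde f(\zeta)\|_{\mathfrak h}^2$ to be $G$-invariant on $\mathbb D$, so it descends to a function on $\Omega$ which by construction equals $(h^E_f)^2$. The crucial geometric step is to match the two least harmonic majorants: writing $u^{\mathbb D}$ for the least harmonic majorant of $\|\tilde f\|^2_{\mathfrak h}$ on $\mathbb D$ and $u^E_f$ for that of $(h^E_f)^2$ on $\Omega$, I claim $u^{\mathbb D}=u^E_f\circ\pi$. One inequality is immediate because $u^E_f\circ\pi$ is harmonic on $\mathbb D$ (since $\pi$ is locally biholomorphic) and dominates $\|\tilde f\|^2_{\mathfrak h}$. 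For the reverse inequality, I would use uniqueness: for any $\varphi\in G$, $u^{\mathbb D}\circ\varphi$ is also a harmonic majorant of $\|\tilde f\|^2_{\mathfrak h}$, so $u^{\mathbb D}\circ\varphi=u^{\mathbb D}$; hence $u^{\mathbb D}$ descends to a harmonic majorant of $(h^E_f)^2$ on $\Omega$, forcing $u^E_f\circ\pi\le u^{\mathbb D}$. Evaluating both sides at $0\in\mathbb D$ and using $\pi(0)=w$ yields
\begin{equation*}
\|\tilde f\|_{H^2_{\mathfrak h}(\mathbb D)}^2 \;=\; u^{\mathbb D}(0) \;=\; u^E_f(w) \;=\; \bigl(\|f\|^E_w\bigr)^2,
\end{equation*}
which shows simultaneously that $U$ maps $H^2_E(\Omega)$ into $H^2(\mathbb D,\mathfrak h)$, lands inside the $\alpha$-automorphic subspace $H^2_\alpha(\mathbb D,\mathfrak h)$, is isometric, and is surjective (the inverse being the descent operation).

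For the intertwining, the multiplication $T_E$ is by the coordinate function $z$ on $\Omega$, whose pullback via $\pi$ is the bounded $G$-automorphic function $\pi$ itself; thus $\widetilde{z\cdot f}=\pi\cdot\tilde f$, and $UT_E=T_\alpha U$.

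The main obstacle is the identification of least harmonic majorants across the covering. It is conceptually clean but must be handled carefully: the key input is that uniqueness of the least harmonic majorant upgrades any $G$-invariant subharmonic function to one whose majorant is also $G$-invariant, which together with local biholomorphy of $\pi$ lets harmonicity pass freely between $\mathbb D$ and $\Omega$. This is consistent with, and indeed a geometric reflection of, the identity $m=\mu\circ\pi^{-1}$ relating the harmonic measures recorded earlier in the excerpt.
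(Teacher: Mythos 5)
The paper does not prove this statement; it is quoted verbatim from Abrahamse--Douglas \cite{misra-AD}, so there is no in-paper argument to compare against. Your proof is correct and is essentially the original one: the identification of holomorphic sections of $E=\mathbb D\times_\alpha\mathfrak h$ with $\alpha$-automorphic $\mathfrak h$-valued functions on $\mathbb D$, the $G$-invariance of the least harmonic majorant (forced by its uniqueness), and the fact that harmonicity transfers through the locally biholomorphic covering $\pi$ are exactly the ingredients of the Abrahamse--Douglas argument, and the evaluation at $0$ with $\pi(0)=w$ correctly matches the two norms. The only point worth flagging is notational: the paper's definition of $u^E_f$ takes the least harmonic majorant of $h^E_f$ rather than of $(h^E_f)^2$; you have silently used the standard $H^2$ convention (majorant of the squared norm, with $\|f\|^2=u(w)$), which is the one under which your isometry computation, and the theorem itself, is correct.
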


The choice of the harmonic measure in the definition of the Hardy space over the domain $\Omega$  may appear to be somewhat arbitrary but it has the advantage of being a conformal invariant and it is closely related to the Greens' function $g$ of $\Omega$ with pole at $t$:  \index{Green's function}\index{directional derivative}
\begin{align*}
dm(z) &= -\frac{1}{2\pi} \frac{\partial}{\partial \eta _z}\big(g(z,t)\big)ds(z),\;\; z \in \partial \Omega, 
\end{align*} where $g(z,t)$ denotes the Green's function for the domain $\Omega$ at the point $t$ and $\frac{\partial}{\partial \eta _z} $ is the directional derivative along the outward normal direction (with respect to the positively oriented $\partial \Omega$). Exploiting this, in the paper \cite{misra-MR}, a description of the bundle shift is given as multiplication operators on Hardy spaces defined on the domain $\Omega$ with respect to a weighted arc length measure. Briefly, starting with  a positive continuous function $\lambda$ on $\partial \Omega$, define the weighted measure $\lambda ds$ on $\partial \Omega$.  Since the harmonic measure $m$ is boundedly mutually absolutely continuous with respect to the arc length measure $ds$, it follows that  $\big( H^2(\partial\Omega), \lambda ds\big)$ acquires the structure of a Hilbert space and the operator $M$ on it is a pure, rationally cyclic, subnormal operator with spectrum equal to $\overbar{\Omega}$ and the spectrum of the minimal normal extension is $\partial\Omega.$ Consequently, the operator $M$ on $\big( H^2(\partial\Omega), \lambda ds\big)$ must be unitarily equivalent to the bundle shift $T_{\alpha}$ on $\big( H^2_{\alpha}(\Omega), dm\big)$ for some character $\alpha.$ In the paper \cite{misra-MR}, this character has been explicitly described in Equation (2.2). It is then shown that given any character $\alpha$, there exists a function $\lambda$ such that the operator $M$ on $H^2(\partial \Omega, \lambda ds)$ is unitarily equivalent to the bundle shift $T_\alpha$.

\subsection{Curvature inequalities} 
The local description of the Cowen-Douglas operators $\boldsymbol T$, naturally lead to curvature inequalities relate to many well known extremal problems assuming that the homomorphism induced by the operator $\boldsymbol T$ is contractive. Among other things, we discuss a) the question of uniqueness of the extremal operators and b) if the curvature inequalities with additional hypothesis implies contractivity.

\subsubsection{The Douglas question}\label{CurvInR}
Let 
$$\mathcal B[w]:=\{f\in \mathcal O(\Omega): \|f\|_\infty < 1, f(w)=0\}.$$  
It is well-known that the extremal problem 
\begin{equation}\label{extrOmega}
\sup\{|f^\prime(w)|: f\in \mathcal B[w]\}
\end{equation}\index{Ahlfor's function}\index{branch point}\index{S\"{z}ego kernel}
admits a solution, say, $F_w \in \mathcal B[w]$. The function $F_w$ is called the Ahlfor's function and maps $\Omega$ onto $\mathbb D$  in a $n$ to $1$ fashion if the connectivity of the region $\Omega$ is $n$.  Indeed, $F_w$ is a branched covering map with branch point at $w$. Also, $F^\prime_w(w)$ is a real analytic function and polarizing it, we get a new function, holomorphic in the first variable and anti-holomorphic in the second, which is the S\"{z}ego kernel of the domain $\Omega$. It is also the reproducing kernel of the Hardy space $H^2(\Omega, ds)$, which we denote by the symbol $S_\Omega$. The relationship of the Hardy space with the solution to the extremal problem \eqref{extrOmega} yields a curvature inequality for contractive homomorphisms $\varrho_T$ induced by operators $T$ in $B_1(\Omega)$, where $\varrho_T(r) = r(T)$, $r \in  {\rm Rat}(\Omega)$.  Recall that the local operator $N(w) + w I = T_{|\mathcal N(w)}$ is of the form $\left (\begin{smallmatrix} w & h(w)\\ 0 & w\end{smallmatrix} \right )$, where
$$h(w) = \big (-\mathscr K_T(w)\big )^{-\tfrac{1}{2}}.$$ 
Consequently, 
$$r(T_{| \mathcal N(w)})  = \left (\begin{smallmatrix} r(w) & r^\prime(w) h(w)\\ 0 & r(w)\end{smallmatrix} \right ).$$ 
We have the obvious inequalities
\begin{align*}
h(w) &\sup\{ |r^\prime(w)| ; r\in {\rm Rat}(\Omega),\, r(w) = 0\}\\ 
&=\sup\{\|\left (\begin{smallmatrix} 0 & r^\prime(w) h(w)\\ 0 & 0\end{smallmatrix} \right )\|: r\in {\rm Rat}(\Omega), r(w) = 0\} \\ 
&\leq  \sup\{ \|\left (\begin{smallmatrix} r(w) & r^\prime(w) h(w)\\ 0 & r(w)\end{smallmatrix} \right )\| : r \in {\rm Rat}(\Omega)\}\\
&\leq \sup\{\|\varrho_T(r) \| :  r \in {\rm Rat}(\Omega)\}
\end{align*}
The contractivity of $\varrho_T$  then gives the curvature inequality
\begin{eqnarray} \label{cuinO}
\mathscr K_T(w) &=& - (\tfrac{1}{h(w)})^2  \nonumber \\
&\leq& - \big (\sup\{ |r^\prime(w)| ; r\in {\rm Rat}(\Omega),\, r(w) = 0\}\big )^2 \nonumber\\
&\leq & - \sup\{|f^\prime(w)|: f\in \mathcal B[w]\} \nonumber
\\ &\leq& - S_\Omega(w,w)^2.
\end{eqnarray}
If $\Omega$ is the unit disc, then from the Schwarz Lemma, it follows that $F^\prime_w(w) = \tfrac{1}{(1-|w|^2)}$. For the unit disc, taking $T=M^*$, the adjoint of the multiplication operator on the Hardy space $H^2(\mathbb D)$, we see that we have equality for all $w$ in $\mathbb D$ in the string of inequalities \eqref{cuinO}.  One can easily see that it makes no difference if we replace the unit disc by any simply connected domain. However, if $\Omega$ is not simply connected, it is shown in \cite{misra-SUITA} that the inequality is strict if we take the operator $T$ to be the adjoint of the multiplication operator on the Hardy space $H^2(\Omega, ds)$.  

Let $K$ be a positive definite kernel on $\Omega$. Assume that the adjoint of the multiplication operator $M$ is in $B_1(\Omega^*)$. What we have shown is that if $\varrho_M$ is contractive, then we have the inequality 
$$\mathcal K(\bar{w}): = - \frac{\partial^2}{\partial w\,\partial \bar{w}} \log K(w,w) \leq  - S_\Omega(w,w)^2,\,\, w\in \Omega,$$
where $\mathcal K$ denotes the curvature of the operator $M^*$ on $(\mathcal H,K)$. Since the obvious candidate which might have served as an extremal operator, namely, the multiplication operator on the Hardy space $H^2(\Omega, ds)$ is not extremal, one may ask 
 if for a fixed but arbitrary  $w_0 \in \Omega$, there exists a kernel $K_0$, depending on $w_0$,   for which $\mathcal K(w_0) = - F_{w_0}(w_0)^2$.  The existence of a kernel $K_0$ with this property was established in \cite{misra-GM}, see also \cite{misra-Ramiz}.  In the paper \cite{misra-Ramiz}, more is proved. For instance, it is established that the multiplication operator on $(\mathcal H, K_0)$ is uniquely determined answering the question of Douglas within the class of pure subnormal operators $T$ with  $\sigma(T) \subseteq \overbar{\Omega}$ and $\sigma(N) \subseteq \partial \sigma(T)$, where $N$ is the minimal normal extension of $T$. This is Theorem 2.6 of \cite{misra-Ramiz}.
The question of Douglas has an affirmative answer in a much larger class of operators when considering contractions in $B_1(\mathbb D)$. The main theorem in \cite{misra-MR} is reproduced below. 
\begin{theorem}\label{uniqueness in unit disc}
Fix an arbitrary point $\zeta \in \mathbb D.$ Let $T$ be an operator in $B_1(\mathbb D)$ such that $T^*$ is a $2$ hyper-contraction. Suppose that the operator $(\phi_{\zeta}(T))^*$ has the wandering subspace property for an automorphism $\phi_{\zeta}$ of the unit disc $\mathbb D$ mapping $\zeta$ to $0.$ If $\mathcal K_T(\zeta)=-(1- |\zeta|^2)^{-2},$ then $T$ must be unitarily equivalent to $U_+^*,$ the  backward shift operator.
\end{theorem}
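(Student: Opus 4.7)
First, reduce to the case $\zeta=0$ via the bi-holomorphic automorphism $\phi_\zeta$ sending $\zeta$ to $0$ and set $S:=\phi_\zeta(T)$. By the change-of-variable formula for the curvature derived just before this theorem, namely $\mathcal K_{\varphi(T)}(\varphi(w)) = |\varphi^\prime(w)|^{-2}\,\mathcal K_T(w)$, we find
\[
\mathcal K_S(0) \;=\; |\phi_\zeta^\prime(\zeta)|^{-2}\,\mathcal K_T(\zeta) \;=\; (1-|\zeta|^2)^2\cdot\bigl(-(1-|\zeta|^2)^{-2}\bigr) \;=\; -1.
\]
The wandering subspace hypothesis is directly about $S^*$. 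One must also argue that $S^*$ inherits the $2$-hypercontractivity of $T^*$, which follows from the M\"obius invariance of that class (a pure $2$-hypercontraction admits a weighted-Bergman-type functional model whose defining kernel transforms covariantly under $\phi_\zeta$).

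Next, realize $S$ as $M_z^*$ on a reproducing kernel Hilbert space $(\mathcal H,K)$ with $K$ normalized so that $K(z,0)=1$. Since $S\in B_1(\mathbb D)$ we have $\ker M_z^*=\mathbb C\cdot 1$. Any wandering subspace of $M_z$ must lie in $(z\mathcal H)^\perp=\ker M_z^*$, so the wandering subspace property of $S^*=M_z$ forces $\mathcal H=\bigoplus_{n\ge 0}\mathbb C\cdot z^n$ as an orthogonal direct sum. Setting $w_n:=\|z^n\|_{\mathcal H}$, one has $K(z,w)=\sum_{n\ge 0}(z\bar w)^n/w_n^2$ and $M_z$ is the forward weighted shift with weights $\lambda_n:=w_{n+1}/w_n$. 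Normalization gives $w_0=1$, and reading off the $z\bar w$-coefficient of $\log K(w,w)$ at $w=0$ gives $\mathcal K_S(0)=-1/w_1^2$, hence $w_1=1$ and $\lambda_0=1$.

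Now invoke the $2$-hypercontractivity of $M_z$. On the orthonormal basis $e_n:=z^n/w_n$ the operators $M_z^*M_z$ and $M_z^{*2}M_z^2$ are diagonal with eigenvalues $\lambda_n^2$ and $\lambda_n^2\lambda_{n+1}^2$, so $I-2M_z^*M_z+M_z^{*2}M_z^2\ge 0$ reduces to the scalar inequalities
\[
\lambda_n^2(2-\lambda_{n+1}^2)\;\le\;1,\qquad n\ge 0.
\]
Combined with contractivity $\lambda_n\le 1$ and the base case $\lambda_0=1$, a one-line induction forces $\lambda_n=1$ for every $n\ge 0$; hence $w_n\equiv 1$ and $K(z,w)=(1-z\bar w)^{-1}$ is the S\"{z}ego kernel. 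Thus $S=M_z^*$ on $H^2(\mathbb D)$, which is $U_+^*$. Since $U_+^*$ is homogeneous (its curvature $-(1-|w|^2)^{-2}$ is manifestly M\"obius invariant), undoing $\phi_\zeta$ yields $T\cong U_+^*$.

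The principal obstacle is the opening step: confirming that the M\"obius transform $\phi_\zeta(T)$ of a $2$-hypercontraction is itself a $2$-hypercontraction. A route sidestepping this is to work directly on the original space, using the observation recorded just before the theorem --- that $\mathcal K_T(\zeta)=-(1-|\zeta|^2)^{-2}$ is equivalent to the linear dependence of $K^\ddag_\zeta$ and $\bar\partial K^\ddag_\zeta$ --- and carrying out the weighted-shift / $2$-hypercontraction computation in a basis adapted to $\zeta$ instead of to the origin.
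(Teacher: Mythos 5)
The paper itself contains no proof of this theorem; it is quoted from \cite{misra-MR}, so your argument must stand on its own. Your overall strategy --- move everything to the origin by $\phi_\zeta$, realize $S:=\phi_\zeta(T)$ as $M^*$ on a normalized reproducing kernel space, use the curvature equality to fix the first weight and $2$-hypercontractivity to propagate it --- is the right one and does lead to the stated conclusion. The first gap you flag yourself, the M\"obius invariance of $2$-hypercontractivity, is real but closable without the full Agler model: from the identity $1-\overline{\phi_\alpha(z)}\phi_\alpha(w)=(1-|\alpha|^2)(1-\bar z w)(1-\alpha\bar z)^{-1}(1-\bar\alpha w)^{-1}$ and the hereditary functional calculus one gets $I-2B^*B+B^{*2}B^2=g(A)^*\big(I-2A^*A+A^{*2}A^2\big)g(A)\geq 0$ for $B=\phi_\alpha(A)$ and $g(w)=(1-|\alpha|^2)(1-\bar\alpha w)^{-2}$, which is holomorphic on a neighbourhood of $\sigma(A)\subseteq\overbar{\mathbb D}$.

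The more serious gap is one you do not flag: the wandering subspace property of $M:=S^*$ gives only that $\mathcal H$ is the \emph{closed linear span} of $\{z^n\}_{n\geq 0}$; it does \emph{not} give that the subspaces $M^n\mathcal W$ are mutually orthogonal (that is automatic for isometries, not for contractions). So you may not write $\mathcal H=\bigoplus_n\mathbb C z^n$, nor $K(z,w)=\sum_n(z\bar w)^n/w_n^2$, and the diagonalization of $M^*M$ and $M^{*2}M^2$ on which your scalar inequalities rest is unjustified at that stage. The conclusion is still reachable, but orthonormality must be proved \emph{jointly} with the norm computation. Normalization gives $\|1\|=1$ and $\langle z^n,1\rangle=0$ for $n\geq 1$; since $\|\bar\partial K_0\|^2=(\partial\bar\partial K)(0,0)=-\mathcal K_S(0)=1$, Cauchy--Schwarz applied to $1=z'(0)=\langle z,\bar\partial K_0\rangle$ gives $\|z\|\geq 1$, while contractivity gives $\|z\|\leq 1$, so $\|z\|=1$; then $\|M1\|=\|1\|$ forces $(I-M^*M)1=0$, i.e. $M^*z=1$. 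Inductively, testing $I-2M^*M+M^{*2}M^2\geq 0$ against the single vector $z^{n-1}$ (no basis needed) yields $\|z^{n+1}\|^2\geq 2\|z^n\|^2-\|z^{n-1}\|^2=1$, hence $\|z^{n+1}\|=1$, hence $M^*z^{n+1}=z^n$, hence $\langle z^{n+1},z^j\rangle=\langle z^{n+1-j},1\rangle=0$ for $j\leq n$. Only now is $\{z^n\}$ orthonormal, and the wandering subspace property identifies $\mathcal H$ with $H^2(\mathbb D)$ and $M$ with $U_+$. Your final step, undoing $\phi_\zeta$ via the homogeneity of $U_+^*$, is fine.
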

We believe, the condition in the Theorem requiring the operator $(\phi_{\zeta}(T))^*$ to have the wandering subspace property must follow from the assumption that $T$ is in $B_1(\mathbb D)$. We haven't been able to prove this yet. 

Continuing our previous question of which holomorphic vector bundles come from flat unitary ones, one might have imagined that the operators in $B_1(\Omega)$ which are extremal would do the trick. However, it is shown in \cite{misra-Ramiz} that many of the  bundle shifts cannot be extremal at any point in the domain. This is discussed at the end of Section 3 of the paper \cite{misra-Ramiz}. So, this question remains open.  

\subsubsection{Infinite divisibility} \index{infinite divisibility}
For a contraction in the Cowen-Douglas class $B_1(\mathbb D)$, we have established that  $\mathscr K_T(w) \leq \mathscr K_{S^*}(w),\, w\in \mathbb D$, where $S$ is the forward shift operator.  Clearly, this is equivalent to saying that the restriction of $T$ to the $2$ - dimensional subspace $\mathcal N_T(w)$ spanned by the two vectors $\gamma_T(w), \gamma_T^{\,\prime}(w)$ is contractive. Therefore, it is unreasonable to expect that the curvature inequality for an operator $T$ would force it to be contractive. Examples are given in \cite{misra-BKM}. A natural question is to ask if the curvature inequality can be strengthened to obtain contractivity. Let $K$ be a positive definite kernel and the adjoint of the multiplication operator $M$ is in $B_1(\mathbb D)$. Clearly, 
$${K}^\ddag(z,w):=(1- z \bar{w})K(z,\bar{w})$$ 
need not be nnd (unless $M$ is contractive), however, it is Hermitian symmetric, i.e. 
$${K^\ddag(z,w)} = \overline{K^\ddag(w,z)}.$$ 
Now, observe that the curvature inequality is equivalent to the inequality 
$$\frac{\partial^2}{\partial w \partial\bar{w}}\log {{K^\ddag}(w,w)} \geq 0.$$ 
But the function 
$$\frac{\partial^2}{\partial w \partial\bar{w}}\log {{K^\ddag}(w,w)}$$ 
is real analytic, its polarization is a function of two complex variables and it is Hermitian symmetric in those variables. \index{Hermitian symmetric function}
Thus we ask what if we make the stronger assumption that 
$$\frac{\partial^2}{\partial w \partial\bar{w}}\log {{K^\ddag}(w,w)}$$ 
is nnd. For a Hermitian symmetric function $K$, we write $K \succeq 0$ to mean that 
$K$ is nnd. Similarly, this stronger form of the curvature inequality implies ${K^\ddag}$ must be infinitely divisible, that is, not only ${K^\ddag}$ is nnd but all its positive real powers ${K^\ddag}^t$ are nnd as well. In particular, it follows that the operator $M$ must be contractive. This is Corollary 4.2 of the paper 
\cite{misra-BKM} which is reproduced below. For two Hermitian symmetric functions $K_1$ and $K_2$, the inequality $K_1 \preceq K_2$ means $K_1 - K_2$ is nnd. 

\begin{theorem}\label{mainc1}
Let $K$ be a positive definite kernel on the open unit disc $\mathbb D$.
Assume that the adjoint $M^*$ of the multiplication operator
$M$ on the reproducing kernel Hilbert space $\mathcal H_K$
belongs to $B_1(\mathbb D)$. The  function
$$\frac{\partial^2}{\partial w\,\partial\bar{w}} \log \big ((1 -|w|^2)K(w,w) \big )$$ 
is positive definite, or equivalently
$$ \mathcal K_{M^*}(w) \preceq \mathcal K_{S^*}(w),\,\,w\in\mathbb D,$$
if and only if the multiplication operator $M$ is an infinitely divisible contraction.
\end{theorem}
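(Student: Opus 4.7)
The plan is to reformulate both sides of the equivalence in terms of a normalized kernel and then dualize via a power-series calculation. Writing $F(z,w):=K^\ddag(z,w)=(1-z\bar w)K(z,w)$, we have $F(w,w)=(1-|w|^2)K(w,w)>0$ on $\mathbb D$, so $F$ does not vanish near the diagonal. For $w_0\in\mathbb D$ fixed, replace $F$ by its normalization
$$F_0(z,w)=\frac{F(z,w)\,F(w_0,w_0)}{F(z,w_0)\,F(w_0,w)},$$
defined on the connected open set about $w_0$ where the denominator is non-zero; then $F_0(z,w_0)=F_0(w_0,w)=1$. Passing from $F$ to $F_0$ multiplies by $g(z)\overline{g(w)}$ for a local non-vanishing holomorphic $g$, which preserves Schur powers (hence infinite divisibility) and does not affect $\partial_z\partial_{\bar w}\log F=\partial_z\partial_{\bar w}\log F_0$. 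Since infinite divisibility of $K^\ddag$ includes its being nnd, the claim that $M$ is a contraction is automatic, and it suffices to prove that $F_0$ is infinitely divisible if and only if $\partial_z\partial_{\bar w}\log F_0\succeq 0$.

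For the forward direction I would first record a short lemma: \emph{if $L$ is nnd and normalized at $w_0$, then $L-1\succeq 0$.} The proof adjoins $z_0=w_0$ with coefficient $c_0=-\sum_{i\geq 1}c_i$ to any finite sample and expands the resulting Gramian inequality using $L(z_i,w_0)=L(w_0,z_j)=L(w_0,w_0)=1$; cancellations leave $\sum_{i,j\geq 1}c_i\bar c_j\bigl(L(z_i,z_j)-1\bigr)\geq 0$. Apply this to each $(F_0)^t$, which stays normalized since $1^t=1$, so $(F_0^t-1)/t\succeq 0$ for every $t>0$; the pointwise limit as $t\to 0^+$ yields $\log F_0\succeq 0$. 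Now in a Kolmogorov factorization $\log F_0(z,w)=\langle e_w,e_z\rangle_H$, Remark \ref{bdder} ensures that the partial-derivative vectors $\partial_{\bar w}e_w$ lie in $H$, and the resulting Gramian identity
$$\partial_z\partial_{\bar w}\log F_0(z,w)=\langle\partial_{\bar w}e_w,\partial_{\bar z}e_z\rangle_H$$
exhibits the mixed derivative as a non-negative definite kernel.

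For the converse, shift so that $w_0=0$. The normalizations $F_0(z,0)=F_0(0,w)=1$ force the power series $\log F_0(z,w)=\sum_{i,j\geq 1}a_{ij}z^i\bar w^j$ to drop every term with $i=0$ or $j=0$, so $\partial_z\partial_{\bar w}\log F_0(z,w)=\sum_{i,j\geq 1}ij\,a_{ij}z^{i-1}\bar w^{j-1}$. The nnd-ness of a sesqui-analytic kernel $\sum b_{ij}z^i\bar w^j$ on a neighborhood of the origin is equivalent to positive semidefiniteness of the coefficient matrix $(b_{ij})$, so the hypothesis yields $(ij\,a_{ij})_{i,j\geq 1}\succeq 0$; the invertible rescaling $\xi_i\leftrightarrow\xi_i/i$ absorbs the factor $ij$ and transfers positivity to $(a_{ij})_{i,j\geq 1}$, i.e., to $\log F_0$ itself. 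Term-by-term exponentiation shows that for every $t>0$ the kernel $(F_0)^t=\sum_{k\geq 0}(t\log F_0)^k/k!$ is nnd (Schur product theorem plus closure of nnd kernels under non-negative combinations), so $F_0$, and hence $K^\ddag$, is infinitely divisible. The main obstacle I anticipate is the bookkeeping required to globalize: the power-series argument is local near each chosen $w_0$, so to produce a \emph{global} $t$-th power of $K^\ddag$ on $\mathbb D\times\mathbb D$ one must verify that the local $t$-th roots patch analytically across the disc. This is where uniqueness of analytic continuation and the fact that $K^\ddag(w,w)>0$ on $\mathbb D$ have to be invoked carefully to piece the local constructions into one nnd kernel on all of $\mathbb D$.
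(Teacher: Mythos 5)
The paper itself does not prove this theorem; it records it as Corollary~4.2 of \cite{misra-BKM}, and your argument is essentially the proof given there: normalize $K^\ddag$ at a point, obtain $\log F_0\succeq 0$ from infinite divisibility via the limit $(F_0^t-1)/t$ together with your lemma that a normalized nnd kernel satisfies $L-1\succeq 0$, read off $\partial_z\partial_{\bar w}\log F_0\succeq 0$ as a Gramian of derivatives of the reproducing kernel (Remark~\ref{bdder}), and, conversely, transfer positivity from $(ij\,a_{ij})_{i,j\ge 1}$ to $(a_{ij})_{i,j\ge 1}$ by the invertible diagonal rescaling and exponentiate using Schur products. All of these steps are sound. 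The one point you defer --- and it is a genuine step, not mere bookkeeping --- is the passage from local to global: your power-series argument only shows that a branch of $(K^\ddag)^t$ is nnd on a neighbourhood of each point, while the conclusion requires a single nnd kernel on all of $\mathbb D\times\mathbb D$. Uniqueness of analytic continuation by itself does not deliver this; what is needed (and what \cite{misra-BKM} supplies as a separate lemma) is that a sesqui-analytic function on $\Omega\times\Omega$, with $\Omega$ connected, which is nnd on $U\times U$ for some nonempty open $U$, is nnd on all of $\Omega\times\Omega$. That lemma is proved by observing that positivity of the full matrix of Taylor coefficients $\bigl(\tfrac{1}{i!\,j!}\partial^i\bar{\partial}^j L(w,w)\bigr)$ is both an open and a closed condition in $w$, and that positivity of this matrix at a point forces nnd-ness on the entire polydisc of convergence, since the truncated sums are globally nnd polynomials converging pointwise. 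With that lemma quoted or proved, your argument is complete and coincides with the source's.
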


\subsubsection{The multi-variable case} \index{row contraction}
We say that a commuting tuple of multiplication operators $\boldsymbol M$ is
an infinitely divisible row contraction if $(1-\langle z,w\rangle) K(z,w)$
is infinitely divisible, that is, 
$$\big ((1-\langle z,w\rangle) K(z,w) \big )^t$$
is positive definite for all $t>0$.

Recall that ${\boldsymbol R}_m^*$ is the adjoint of the joint weighted shift operator on the
Drury-Arveson space $H^2_m$. The
following theorem is a characterization of infinitely divisible
row contractions.\index{Drury-Arveson space}

\begin{proposition} (\cite[Corollary 4.3]{misra-BKM})
Let $K$ be a positive definite kernel on the Euclidean ball
$\mathbb B_m$. Assume that the adjoint $\boldsymbol M^*$ of the
multiplication operator $\boldsymbol M$ on the reproducing kernel Hilbert
space $\mathcal H_K$ belongs to $B_1(\mathbb B_m)$. The  function
$$\big (\!\big (\frac{\partial^2}{\partial w_i\,\partial\bar{w_j}}
\log \big ((1 -\langle  w,  w \rangle)K( w, w) \big )\,\big
)\!\big )_{i,j=1}^m,\quad w\in \mathbb B_m,$$ 
is positive definite, or equivalently
$$ {\mathscr K}_{\boldsymbol M^*}( w) \preceq {\mathscr K}_{\boldsymbol R_m^*}( w),
\quad w\in\mathbb B_m,$$
if and only if the multiplication operator $\boldsymbol M$ is an infinitely
divisible row contraction.
\end{proposition}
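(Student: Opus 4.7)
The strategy is to reduce the proposition to a general characterization of infinite divisibility of Hermitian symmetric real-analytic kernels via the positivity of their log-Hessian along the diagonal, and then specialize to $F(z,w) := K^\ddag(z,w) = (1 - \langle z,w\rangle) K(z,w)$. The required lemma is the multi-variable analog of the statement behind Theorem~\ref{mainc1}: for a Hermitian symmetric real-analytic $F$ on $\Omega \times \Omega$, $\Omega \subseteq \mathbb{C}^m$, with $F(w,w) > 0$, the powers $F^t$ are positive definite for every $t > 0$ if and only if
$$\bigg(\frac{\partial^2}{\partial w_i\, \partial \bar w_j} \log F(w,w)\bigg)_{i,j=1}^m \succeq 0, \qquad w \in \Omega.$$
Once this lemma is in hand, applying it to $F = K^\ddag$ yields the stated equivalence, together with the observation that the $t=1$ case of $(K^\ddag)^t \succeq 0$ is exactly the assertion that $\boldsymbol M$ is a row contraction on $\mathbb B_m$ (the multi-variable analog of \cite[Lemma 2.1.10]{misra-SG}).

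The forward direction is the easier half. Assuming $F^t$ is nnd for every $t>0$, I would normalize by the diagonal to get $\sum_{\alpha,\beta} c_\alpha \bar c_\beta \bigl(F(z_\alpha,z_\beta)/\sqrt{F(z_\alpha,z_\alpha)F(z_\beta,z_\beta)}\bigr)^t \geq 0$; subtracting $|\sum_\alpha c_\alpha|^2$, dividing by $t$, and letting $t \to 0^+$ gives that $\log F$ is conditionally nnd in the Parthasarathy--Schmidt sense. Specializing to $2$-point configurations of the form $(w, w + \varepsilon v)$ with $\sum c_\alpha = 0$ and letting $\varepsilon \to 0$ extracts the nnd condition for the log-Hessian matrix.

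The reverse direction is the main obstacle. The plan is to fix $w_0 \in \Omega$ and normalize to $\tilde F(z,w) = F(w_0,w_0)F(z,w)/(F(z,w_0)F(w_0,w))$, which is well defined and positive on a neighborhood of $w_0$, satisfies $\tilde F(z,w_0) = \tilde F(w_0,w) \equiv 1$, and shares the log-Hessian of $F$. Expanding $\log \tilde F(z,w) = \sum_{|\alpha|,|\beta|\geq 1} a_{\alpha,\beta}\,(z-w_0)^\alpha (\bar w - \bar w_0)^\beta$, the normalization forces the series to begin at bi-degree $(1,1)$, with leading block given by the log-Hessian of $F$ at $w_0$. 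A bootstrap exploiting that the log-Hessian is nnd at \emph{every} point of $\Omega$ (not just $w_0$) is then used to show that the full matrix $(a_{\alpha,\beta})$ is nnd, i.e.\ that $\log \tilde F$ is an nnd kernel locally. By the Schur product theorem, $\tilde F^t = \exp(t \log \tilde F)$ is nnd on this neighborhood, and unnormalizing together with the identity theorem for real-analytic functions extends the positivity globally. This bootstrap from the $(1,1)$-block to the full Taylor array is the delicate step that distinguishes the multi-variable argument from its one-variable counterpart in \cite{misra-BKM}.
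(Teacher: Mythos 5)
First, a point of reference: the survey does not actually prove this proposition --- it is quoted from \cite[Corollary 4.3]{misra-BKM}, and the only argument offered in the text is the informal one-variable discussion preceding Theorem~\ref{mainc1}. Your architecture --- reduce everything to a single lemma asserting that a Hermitian symmetric, real-analytic $F$ with $F(w,w)>0$ is infinitely divisible if and only if its polarized log-Hessian is an nnd kernel, and then apply it to $F=K^\ddag(z,w)=(1-\langle z,w\rangle)K(z,w)$ --- is exactly the route taken in \cite{misra-BKM}, and the reduction itself is sound: with the paper's definition of an infinitely divisible row contraction, the proposition literally \emph{is} that lemma specialized to $K^\ddag$, and the identification of the log-Hessian condition with $\mathscr K_{\boldsymbol M^*}\preceq \mathscr K_{\boldsymbol R_m^*}$ is immediate from $\log K^\ddag=\log(1-\langle w,w\rangle)+\log K$.

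Two gaps remain, one minor and one substantive. In the forward direction, your $2$-point configurations $(w,\,w+\varepsilon v)$ only yield that the log-Hessian \emph{matrix} is positive semi-definite at each point $w$ --- that is, the pointwise curvature inequality, which the paper is at pains to distinguish from the hypothesis of the proposition. To get nnd-ness of the \emph{polarized} matrix-valued kernel $\big(\partial_i\bar\partial_j\log F(z,w)\big)$ you must run the same limit over $2N$-point configurations $\{z_\alpha\}\cup\{z_\alpha+\varepsilon v_\alpha\}$ with coefficients $\mp d_\alpha$; routine, but not what you wrote. The substantive gap is in the reverse direction: the ``bootstrap from the $(1,1)$-block to the full Taylor array'' is precisely the main technical lemma of \cite{misra-BKM}, and you have named it rather than proved it. In one variable it is easy --- the coefficient array of $\partial\bar\partial L$ is $DAD^*$ for an invertible diagonal $D$, so positivity passes back to $A=(a_{mn})$ --- but for $m>1$ the Hessian coefficients $\alpha_i\beta_j\,a_{\alpha+e_i,\beta+e_j}$ are not related to $(a_{\alpha\beta})$ by any such congruence, and showing that nnd-ness of the matrix-valued Hessian kernel together with the normalization $L(\cdot,w_0)=L(w_0,\cdot)=0$ forces $(a_{\alpha\beta})_{|\alpha|,|\beta|\geq 1}\succeq 0$ is the entire content of the hard implication. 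As written, that implication is asserted, not established. (A smaller loose end: propagating nnd-ness of a sesqui-analytic kernel from a neighbourhood of $w_0$ to all of $\mathbb B_m$ also deserves a sentence, though it is standard.)
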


In the case of the polydisc, we say a commuting tuple $\boldsymbol M$ of multiplication
by the co-ordinate functions acting on
the Hilbert space $\mathcal H_K$ is infinitely divisible if 
$$\big(S( z,w)^{-1} K( z, w) \big )^t,$$ 
where 
$$S( z, w):= \prod_{i=1}^m(1-z_i\bar{w}_i)^{-1},\quad  z, w\in \mathbb D^m,$$
is positive definite for all $t>0$.  Every commuting tuple of contractions
$\boldsymbol M^*$ need not be infinitely divisible. Let $\boldsymbol S_m$ be the
commuting $m$ - tuple of the joint weighted shift defined on the
Hardy space $H^2(\mathbb D^m$).

\begin{corollary}(\cite[Corollary 4.4]{misra-BKM})
Let $K$ be a positive definite kernel on the polydisc $\mathbb
D^m$. Assume that the adjoint $\boldsymbol M^*$ of the multiplication
operator $\boldsymbol M$ on the reproducing kernel Hilbert space
$\mathcal H_K$ belongs to $B_1(\mathbb D^m)$. The  function 
$$\big(\!\big (\frac{\partial^2}{\partial w_i\,\partial\bar{w_j}} \log
\big (S(w, w)^{-1} K(w,w) \big )\,\big )\!\big )_{i,j=1}^m,\quad w\in
\mathbb D^m,$$ 
is positive definite, or equivalently
$$ {\mathscr K}_{\boldsymbol M^*}(w) \preceq {\mathscr K}_{\boldsymbol S_m^*}(w),
\quad w\in\mathbb D^m,$$
if and only if the multiplication operator $\boldsymbol M$ is an infinitely
divisible $m$-tuple of row contractions.
\end{corollary}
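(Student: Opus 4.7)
The plan is to invoke the key characterization of infinite divisibility from \cite{misra-BKM}, namely the general fact that for a real analytic, Hermitian symmetric function $F:\Omega \times \Omega \to \mathbb{C}$ with $F(w,w) > 0$ on a domain $\Omega \subseteq \mathbb{C}^m$, the powers $F^{t}$ are positive definite for all $t > 0$ if and only if the matrix-valued kernel
$$w \mapsto \Big(\!\!\Big(\frac{\partial^2}{\partial w_i\, \partial \bar{w}_j} \log F(w,w)\Big)\!\!\Big)_{i,j=1}^m$$
is positive definite on $\Omega$. This is the multi-variable analogue of the key lemma behind Theorem \ref{mainc1} and Proposition on the Euclidean ball, and it is what I would use as the main tool.

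First I would set $F(z,w) := S(z,w)^{-1} K(z,w) = \prod_{i=1}^m (1-z_i\bar{w}_i)\, K(z,w)$ and verify that $F$ is Hermitian symmetric, real analytic, and positive on the diagonal (the latter using $K(w,w)>0$ which is automatic since $\boldsymbol{M}^* \in B_1(\mathbb{D}^m)$). The definition preceding the corollary states that $\boldsymbol{M}$ is infinitely divisible exactly when $F^t$ is positive definite for every $t > 0$, so the equivalence in the corollary is the specialization of the general lemma above to this particular $F$. For the equivalent reformulation in terms of curvatures, I would use the decomposition
$$\frac{\partial^2}{\partial w_i\, \partial \bar{w}_j} \log F(w,w) = \frac{\partial^2}{\partial w_i\, \partial \bar{w}_j} \log K(w,w) - \frac{\partial^2}{\partial w_i\, \partial \bar{w}_j} \log S(w,w),$$
together with the elementary computation $\big(\partial_{w_i} \bar{\partial}_{\bar{w}_j} \log S(w,w)\big)_{i,j} = \mathrm{diag}\big((1-|w_i|^2)^{-2}\big)$, which identifies the matrix-positivity of the left side with $\mathscr{K}_{\boldsymbol{M}^*}(w) \preceq \mathscr{K}_{\boldsymbol{S}_m^*}(w)$, exactly as in the analogous computation for the disc with $\mathscr{K}_{S^*}(w) = -(1-|w|^2)^{-2}$.

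The main obstacle is of course the general lemma on infinite divisibility itself, which is the technical core of \cite{misra-BKM}. The forward direction (infinite divisibility implies Hessian-positivity) is the easier one and comes from differentiating at $t = 0$ after the standard normalization that renders $F(\cdot, w_0) \equiv 1$, so that the logarithmic Hessian is recovered as $\lim_{t \to 0^+} t^{-1}(F^{t} - 1)$ evaluated on the normalized reproducing kernel. The converse is more delicate and proceeds by reconstructing $F^{t}$ from the Hessian as an exponential of a suitable conditionally positive definite function and verifying positive definiteness is preserved for every $t > 0$. Once this general lemma is in hand, the corollary itself is essentially a matter of unwinding definitions and performing the curvature computation above.
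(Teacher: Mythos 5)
Your outline is correct and follows essentially the same route as the source \cite{misra-BKM} that this survey cites (the paper itself states the corollary without proof): one specializes the general equivalence between positive definiteness of the polarized log-Hessian of a Hermitian symmetric, real-analytic, diagonally positive function $F$ and positive definiteness of all powers $F^t$ to $F(z,w)=S(z,w)^{-1}K(z,w)$, and the computation $\big(\partial_{w_i}\bar{\partial}_{w_j}\log S(w,w)\big)_{i,j=1}^m=\mathrm{diag}\big((1-|w_i|^2)^{-2}\big)$ identifies the resulting Hessian positivity with $\mathscr K_{\boldsymbol M^*}\preceq \mathscr K_{\boldsymbol S_m^*}$. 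You have also correctly located the only point of real substance, namely the general infinite-divisibility lemma, and your Schoenberg-type sketch of it (differentiation at $t=0$ after normalizing at a point for one direction, exponentiation of a conditionally positive definite function for the other) is the argument actually used in \cite{misra-BKM}.
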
\index{local operator}

Exploiting the explicit description of the local operators $N_1(w), \ldots, N_m(w)$, a very general curvature inequality for a commuting tuple of operators $\boldsymbol T$ in $B_n(\Omega)$ is given in \cite[Theorem 2.4]{misra-MR}. We reproduce below a simple instance of such inequalities taking $\Omega$ to be the unit ball $\mathbb B_m$ in $\mathbb C^m$ and setting $n=1$. 

\begin{theorem}(\cite[Theorem 4.2]{misra-MisPal}) 
Let $\theta_w$ be a bi-holomorphic automorphism of $\mathbb B_m$ such that
$\theta_w(w)=0.$ If $\rho_{\boldsymbol T}$ is a contractive homomorphism
of $\mathcal O(\mathbb B_m)$ induced by the localization $N_{\mathbf
T}(w)$, $\boldsymbol T \in \mathrm B_1(\mathbb B),$ then
$$\mathcal K_{\boldsymbol T}(w)\leq -\overline{D\theta_w(w)}^tD\theta_w(w)=
\tfrac{1}{m+1}\mathcal K_{B}(w),\,\, w\in \mathbb B_m,$$
where  $\mathcal K_B$ is the curvature 
$$- \big (\!\big (\frac{\partial^2}{\partial w_i\,\partial\bar{w_j}}
\log B( w, w) \big )\,\big)\!\big )_{i,j=1}^m$$ 
and 
$$B(z,w) =\big (\tfrac{1}{1-\langle {z},{w}\rangle}\big )^{m+1}$$ 
is  the Bergman kernel of the ball $\mathbb B_m$.
\end{theorem}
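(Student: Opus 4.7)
The plan is to derive the curvature inequality by applying contractivity of $\rho_{\boldsymbol T}$ to carefully chosen scalar test functions built from $\theta_w$, then reading the matrix inequality off using the explicit local description of $N_{\boldsymbol T}(w)$ established in Section \ref{locop}.

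First I recall that in the orthonormal basis $\{\varepsilon_0, \varepsilon_1, \ldots, \varepsilon_m\}$ of $\mathcal N(w)$ built by Gram--Schmidt from $\{\gamma(w), \partial_1\gamma(w), \ldots, \partial_m\gamma(w)\}$, each $N_k(w)$ has the matrix form $\bigl(\begin{smallmatrix} 0 & \boldsymbol\alpha_k^{\mathrm t} \\ 0 & \boldsymbol 0\end{smallmatrix}\bigr)$, and identity \eqref{curvform} expresses $(\mathrm{tr}(N_k(w) N_j(w)^{*}))_{k,j}$ as $(-\mathcal K_{\boldsymbol T}(w)^{\mathrm t})^{-1}$. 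The crucial feature is the nilpotency $N_k N_j = 0$: for any $f \in \mathcal O(\mathbb B_m)$ with $f(w) = 0$, the Taylor expansion of $f$ at $w$ collapses to $f(N_{\boldsymbol T}(w)) = \sum_{k=1}^m (\partial_k f)(w)\, N_k(w)$, a rank-one nilpotent of the shape $\bigl(\begin{smallmatrix} 0 & v^{\mathrm t} \\ 0 & \boldsymbol 0\end{smallmatrix}\bigr)$ whose operator norm equals $\|v\|_{\mathbb C^m}$.

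Next, for each $\xi \in \mathbb C^m$ I would apply this principle to the scalar test function $f_\xi(z) := \xi^{\mathrm t}\theta_w(z)$. Cauchy--Schwarz together with $\theta_w(\mathbb B_m) \subseteq \mathbb B_m$ yields $\|f_\xi\|_{\infty,\mathbb B_m} \leq \|\xi\|$, and the hypothesis $\theta_w(w) = 0$ gives $f_\xi(w) = 0$; so contractivity of $\rho_{\boldsymbol T}$ gives
\[
\Bigl\|\sum_{k=1}^m \Bigl(\sum_{j=1}^m \xi_j (D\theta_w(w))_{jk}\Bigr) \boldsymbol\alpha_k(w)\Bigr\|^2 \;\leq\; \|\xi\|^2.
\]
Expanding the left-hand side using $\langle \boldsymbol\alpha_k, \boldsymbol\alpha_l\rangle = \mathrm{tr}(N_k N_l^{*})$ and the identity of the previous paragraph produces a quadratic form in $\xi$ whose coefficient matrix is (modulo transpose conventions) $D\theta_w(w)\,(-\mathcal K_{\boldsymbol T}(w))^{-1}\, D\theta_w(w)^*$. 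Polarizing over all $\xi$ and inverting---which is legitimate since $D\theta_w(w)$ is invertible and $-\mathcal K_{\boldsymbol T}(w)$ is positive definite---yields the stated inequality $\mathcal K_{\boldsymbol T}(w) \leq -\overline{D\theta_w(w)}^{\mathrm t}D\theta_w(w)$.

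For the equality with the Bergman kernel I would invoke the automorphism invariance of the Bergman metric on $\mathbb B_m$: pulling the $(1,1)$-form $-\mathcal K_B$ back by $\theta_w$ and evaluating at the origin gives $-\mathcal K_B(w) = D\theta_w(w)^*\bigl(-\mathcal K_B(0)\bigr)D\theta_w(w) = (m+1)\, D\theta_w(w)^* D\theta_w(w)$, because a direct computation from $B(z,w) = (1-\langle z,w\rangle)^{-(m+1)}$ gives $-\mathcal K_B(0) = (m+1)\,I$. Rearranging yields $-\overline{D\theta_w(w)}^{\mathrm t} D\theta_w(w) = \tfrac{1}{m+1}\mathcal K_B(w)$, completing the proof. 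The main obstacle is the polarization in the previous paragraph: the family of scalar bounds must be assembled into a single Hermitian matrix inequality, and the conjugate and transpose conventions coming from \eqref{curvform} and from the Gram--Schmidt orthonormalization must be tracked carefully so that the right-hand side comes out as $-\overline{D\theta_w(w)}^{\mathrm t} D\theta_w(w)$ rather than a transposed or conjugated variant of it.
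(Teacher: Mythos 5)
Your argument is correct and is essentially the proof the paper intends: the paper only cites this result from \cite{misra-MisPal}, but the surrounding material — the local-operator formula \eqref{curvform}, the nilpotency $N_iN_j=0$, and the one-variable computation with $r(T_{|\mathcal N(w)})$ in the subsection on the Douglas question — is exactly what you assemble, with the coordinates of $\theta_w$ playing the role of the test function vanishing at $w$. The only caveat is the transpose/conjugate bookkeeping you already flag (your route most naturally lands on $-D\theta_w(w)^{\rm t}\overline{D\theta_w(w)}$, the transpose of the stated bound, which is equivalent since all matrices involved are Hermitian), so nothing of substance is missing.
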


\subsection{Homogeneous operators}\index{bounded symmetric domain}
\index{homogeneous operator}\index{matrix unit ball}\index{automorphism group}
Let $\mathcal D$ be a bounded symmetric domain. 
The typical examples are the matrix unit ball $(\mathbb C^{p\times q})_1$ of size $p \times q$, which includes the case of the Euclidean ball, i.e., $q=1$. Let $G:=\mathrm{Aut}(\mathcal D)$ be the bi-holomorphic automorphism group of $\mathcal D$. 

For the matrix unit ball, $G:=\mathrm{SU}(p,q),$ which consists  
of all linear automorphisms leaving the form $\begin{psmallmatrix}I_n & 0\\ 0 &- I_m \end{psmallmatrix}$ on $\mathbb C^{p+q}$ invariant. Thus $g \in  \mathrm{SU}(p,q)$ is of the form 
$\begin{psmallmatrix} a & b \\c & d\end{psmallmatrix}$.  
The group $\mathrm{SU}(p, q)$ acts on $(\mathbb C^{p \times q})_1$ via  the map 
$$g = \begin{pmatrix} a & b\\
c & d \\\end{pmatrix}: z \mapsto (a z + b z) ( c z + d z)^{-1},\,\, z \in 
(\mathbb C^{p \times q})_1.$$
This action is transitive. Indeed $(\mathbb C^{p \times q})_1 \cong \mathrm{SU}(p, q)/ \mathbf K,$ where $\mathbf K$ is the stabilizer of $\mathbf 0$ in $(\mathbb C^{p \times q})_1$.\index{stabilizer}\index{transitive action}

When $\mathcal D$ is a bounded symmetric domain of dimension $m$ and $\mathcal H$ is any Hilbert space, an $m$-tuple $T=(T_1, \ldots, T_m)$ of commuting bounded operators acting on $\mathcal H$ is said to be  homogeneous (cf. 
\cite{misra-MSJOT, misra-twist}) if their joint Taylor spectrum is contained in $\overbar{\mathcal D}$ 
and for every holomorphic automorphism $g$ of $\mathcal D,$ there exists a unitary operator $U_g$ such that \index{Taylor spectrum}
$$
g (T_1, \ldots ,T_m) = (U_g^{-1} T_1 U_g, \ldots , U_g^{-1} T_m U_g).
$$   

\emph{Imprimitivity.} \index{imprimitivity}
More generally, let $G$ be a locally compact second countable (lcsc) topological group and $\mathcal D$ be a lcsc $G$-space.  Suppose that $U: G \to \mathcal U(\mathcal H)$ is a unitary representation of the group $G$ on the  Hilbert space $\mathcal H$ and that 
$\varrho:\mathbf C(\mathcal D) \to \mathcal L(\mathcal H)$ is a $*$ - homomorphism of the $C^*$ - algebra of continuous functions $\mathbf C(\mathcal D)$ on the algebra $\mathcal L(\mathcal H)$ of all bounded operators acting on the Hilbert space $\mathcal H$.
Then the pair $(U , \varrho)$ is said to be a representation of the $G$-space $\mathcal D$  if 
$$\varrho(g \cdot f) = U(g)^* \varrho(f) U(g),\, f\in \mathbf C(\mathcal D),\, g\in G,$$
where $(g \cdot f)(w) = f(g^{-1} \cdot w),\, w\in \mathcal D$. This  is  the  generalization due to Mackey of the imprimitivity relation of  Frobenius.  These are exactly the homogenous commuting tuples of normal operators. 

As before, let $\mathbf K$ be the stabilizer group of $\mathbf 0$ in $G$, thus $G/\mathbf K\cong \mathcal D$, where the identification is obtained via the map: $ g\mathbf K \to  g \mathbf 0$. 
The action of $G$ on $\mathcal D$ is evidently transitive. 
 Given any unitary representation $\sigma$ of $\mathbf K$, one may associate a representation $(U^\sigma, \rho^\sigma)$ of the $G$ -space $\mathcal D$. The correspondence
$$\sigma \to (U^\sigma, \rho^\sigma)$$ \index{semi-simple group}
is an equivalence of categories. \index{parabolic subgroup}
The representation $U^\sigma$ is the representation of $G$  induced by the representation $\sigma$ of the group $\mathbf K$.\index{irreducible representation}
For a semi-simple group $G$, induction from the parabolic subgroups is the key to producing irreducible representations. Along with holomorphic induction, this method gives almost all the irreducible  unitary representations of the semi-simple group $G$. 

We ask what happens if the algebra of continuous functions is replaced by the polynomial ring and the $*$~- ~homomorphism $\varrho$ is required to be merely a homomorphism of this ring. These are the commuting tuples of homogeneous operators. 
\index{imprimitivity relation}

\subsubsection{Quasi-invariant kernels}\index{quasi-invariant kernel}
Let $\mathcal M\subseteq \mathrm{Hol}(\mathcal D)$ be a Hilbert space possessing a reproducing kernel, say, $K$. These holomorphic imprimitivities are exactly homogeneous operators. 
Assume that $\mathcal M$ is a Hilbert module over the polynomial ring $\mathbb C[\mathbf z]$. 
Let $U: G\to \mathcal U(\mathcal M)$ be a unitary representation.
What are the pairs $(U, \rho)$ that satisfy the imprimitivity relation, namely,
$$U_g^*\rho(p) U_g = \rho( p \circ g^{-1}),\,\,g\in G,\,\, p\in \mathbb C[\mathbf z].$$
Suppose that the kernel function $K$ transforms according to the rule 
$$
J_g(z)K(g(z), g(w)) J_g(w)^* = K(z,w),\,\,g \in G,\,\, z,w \in \mathcal D,
$$
for some holomorphic function $J_g:\mathcal D \to \mathbb C$.
Then the kernel $K$ is said to be quasi-invariant, which is equivalent to saying that the map $U_g:f \to J_g\, (f\circ g^{-1}),\, g\in G,$ is unitary.
If we further assume that the $J_g:\mathcal D \to \mathbb C$ is a cocycle, then $U$ is a homomorphism. 
The pair $(U,\rho)$ is a representation of the $G$-space $\mathcal D$  and conversely.

Therefore, our question becomes that of 
 a characterization of all the quasi-invariant kernels defined on $\mathcal D$, 
 or equivalently, finding all the holomorphic cocycles, 
 which is also equivalent to finding all the holomorphic Hermitian homogeneous vector bundles over $\mathcal D$.\index{holomorphic cocycle}

Let $K:\Omega\times \Omega \to  \mathcal M_m(\mathbb C)$ be a kernel function.  We will assume the function $K$ is holomorphic in the first variable and anti-holomorphic in the second. For two functions of the form $K(\cdot, w_i)\zeta_i$, $\zeta_i$ in $\mathbb C^m$ ($i=1,2$) define their inner product by the reproducing property, that is, 
$$\langle K(\cdot,w_1) \zeta_1, K(\cdot, w_2) \zeta_2\rangle = \langle K(w_2,w_1) \zeta_1, \zeta_2\rangle.$$ 
This extends to an inner product on the linear span if and only if $K$ is positive definite in the sense that  
\begin{eqnarray*}
\sum_{j,k=1}^n\langle K(z_j,z_k) \zeta_{k}, \zeta_j \rangle &=& \Big \langle \sum_{k=1}^n  K(\cdot, z_k) \zeta_k , \sum_{j=1}^n K(\cdot, z_j)\zeta_j \Big \rangle\\
&=&\Big \|\sum_{k=1}^n \langle K(\cdot, z_k) \zeta_k \Big \|^2 \geq0.
\end{eqnarray*}

Let $G:\Omega\times \Omega \to \mathcal M_m(\mathbb C)$ be the Grammian
$G(z,w) = \left( \langle e_p(w), e_q(z) \rangle \right )_{p,q}$
of a set of $m$ antiholomorphic functions $e_p:\Omega \to \mathcal H$, $1\leq p \leq m$, taking values in some Hilbert space $\mathcal H$. Then\index{Grammian}
\begin{eqnarray*}
\sum_{j,k=1}^n \langle G(z_j,z_k) \zeta_k, \zeta_j\rangle &=& \sum_{j,k=1}^n \big ( \sum_{pq=1}^m  \langle e_p(z_k),e_q(z_j) \rangle \zeta_k(p) \overline{\zeta_j(q)}\,\big ) \\
&=& \Big \| \sum_{j,k}\zeta_k(p) e_p(z_k) \Big \|^2 > 0.
\end{eqnarray*}
We therefore conclude that $G(z,w)^{\rm tr}$ defines a positive definite kernel on $\Omega$.  

For  an anti-holomorphic function $s:\Omega \to \mathbb C^m$, let us define the norm, at $w$, $\|s(w)\|^2:=\| K(\cdot, w) s(w) \|^2$, where the norm on the right hand side is the norm of the Hilbert space $\mathcal H$ defined by the positive definite kernel $K$. Let $\varepsilon_i$, $1 \leq i \leq m$,  be the standard basis vectors in $\mathbb C^m$. We have 
\begin{eqnarray*}
\Big \| K(\cdot, w) s(w) \Big \|^2 &=& \Big \|\sum_i s_i(w) K(\cdot, w) \varepsilon_i \Big \|^2\\
&=& \sum_{i,j} \langle  s_i(w) K(\cdot, w) \varepsilon_i , s_j(w) K(\cdot, w) \varepsilon_j \rangle\\
&=& \sum_{i,j} s_i(w) \overline{s_j(w)} \langle  K(\cdot, w) \varepsilon_i , K(\cdot, w) \varepsilon_j \rangle\\
&=& \sum_{i,j} K(w,w)_{j,i}\, s_i(w) \overline{s_j(w)}\\
&=& \overline{\big (s(w)\big )}^{\rm \,tr}K(w,w)^{\rm tr} s(w)
\end{eqnarray*}
For $w$ in $\Omega$, and $p$ in the set $\{1,\ldots,m\}$, let $e_p:\Omega \to \mathcal H$ be the antiholomorphic function: 
$$e_p(w):=K_w(\cdot)\otimes\frac{\partial}{\partial \bar{w}_p}K_w(\cdot) - 
\frac{\partial}{\partial \bar{w}_p}K_w(\cdot)\otimes K_w(\cdot).$$
Then 
$$
\frac{1}{2}\langle e_p(w), e_q(z) \rangle =  K(z,w)\frac{\partial^2}{\partial z_q\partial \bar{w}_p}K(z,w) - \frac{\partial}{\partial \bar{w}_p}K(z,w) \frac{\partial}{\partial {z}_q}K(z,w).
$$
The curvature of the metric $K$ is given by the $(1,1)$ - form  
$$\sum \frac{\partial^2}{\partial w_q\partial \bar{w}_p}\log K(w,w)\,dw_q
\wedge d\bar{w}_p.$$  
Set 
$$\mathcal K(z,w) :=  \left(\frac{\partial^2}{\partial z_q\partial \bar{w}_p}\log K(z,w) 
\right)_{qp}.$$
Since $2 K(z,w)^2 \mathcal K(z,w)$ is of the form $G(z,w)^{\rm tr}$, it follows that $K(z,w)^2 \mathcal K(z,w)$ defines a positive definite kernel on $\Omega$ taking values in $\mathcal M_m(\mathbb C)$.

Let $\varphi:\Omega \to \Omega$ be holomorphic and $g:\Omega \to \mathbb C$ be the function $g(w):=K(w,w)$, $w\in \Omega$. Set $h = g \circ \varphi$. Apply the change of variable formula twice. The first time around, we have     
$$ 
(\partial_i h)(w) = \sum_\ell (\partial_\ell g)(\varphi(w)) (\partial_i\varphi_\ell)(w),$$      
differentiating a second time, we have
$$
(\bar{\partial}_j \partial_i h)(w) = \sum_\ell \Big (\sum_k \bar{\partial}_k \big ( \partial_\ell g \big ) (\varphi(w)) \overline{\partial_j \varphi_k(w)}\Big ) (\partial_i\varphi_\ell)(w).
$$
In terms of matrices, we have 
\begin{align*}
\left(\frac{\partial^2}{\partial z_i \partial \bar{w}_j}\log K(\varphi(z),\varphi(w)) 
\right)_{i,j} 
&=\left(\frac{\partial\varphi_\ell}{\partial z_i} \right)_{i,\ell} 
\left( \Big (\frac{\partial^2}{\partial z_\ell\partial \bar{w}_k}\log K\Big )\big (\varphi(z),\varphi(w) \big )
\right)_{\ell, k} \left(\frac{\partial\bar{\varphi}_k}{\partial \bar{z}_j} \right)_{k,j}.
\end{align*}
Equivalently, 
$$
\mathcal K_{g\circ \varphi}(w) = D\varphi(w)^{\rm tr} \mathcal K_g(\varphi(w)) \overline{D\varphi(w)}.
$$
If 
$${\det}_{\mathbb C}\, D\varphi (w)\, \big (g\circ \varphi \big )(w) \,\overline{{\det}_{\mathbb C} 
D\varphi (w)} = g(w),$$ 
then $\mathcal K_{g\circ \varphi}(w)$ equals $\mathcal K_g(w)$. Hence we conclude that 
$\mathcal K$ is invariant under the automorphisms $\varphi$ of $\Omega$ in the sense that 
$$
D\varphi(w)^{\rm tr} \mathcal K(\varphi(w)) \overline{D\varphi(w)} = \mathcal K(w),\,\, w\in \Omega.
$$   
Let $Q: \Omega \to \mathcal M_m(\mathbb C)$ be a real analytic function such that $Q(w)$ is  positive definite for $w\in \Omega$.  Let $\mathcal H$ be the Hilbert space of holomorphic functions on $\Omega$ which are square integrable with respect to $Q(w) dV(w)$, that is, 
$$
\|f\|^2:= \int_\Omega \langle Q(w)f(w), f(w) \rangle^2 dV(w),
$$
where $dV$ is the normalized volume measure on $\mathbb C^m$. Let $U_\varphi:\mathcal H \to \mathcal H$ be the operator 
$$(U_\varphi f)(z) = m(\varphi^{-1}, z) (f\circ \varphi^{-1})(z)$$ 
for some cocycle $m$. The operator $U_\varphi$ is unitary if and only if 
\begin{eqnarray*}
\|U_\varphi f\|^2 &=& \int \langle Q(w) (U_\varphi f)(w), (U_\varphi f)(w) \rangle^2 dV(w)\\   
&=& \int \langle \overline{m(\varphi^{-1},w)}^{\rm \,tr} Q(w) m(\varphi^{-1},w) f(\varphi^{-1}(w)), f(\varphi^{-1}(w))  \rangle^2 dV(w)\\
&=& \int  \langle Q(w) f(w), f(w) \rangle^2 dV(w),
\end{eqnarray*}
whenever $Q$ transforms according to the rule
\begin{equation}\label{Q}
\overline{m(\varphi^{-1},w)}^{\rm \,tr} Q(w) m(\varphi^{-1},w) = Q(\varphi^{-1}(w)) | {\det}_{\mathbb C} (D\varphi^{-1})(w)|^2.
\end{equation}
Set 
$$m(\varphi^{-1},w)= D\varphi^{-1}(w)^{\rm tr}$$ 
and 
$$Q^{(\lambda)}(w):=b(w)^{1-\lambda}\mathcal K_g(w)^{-1},\quad\lambda > 0,$$ 
where  $b(w)$ is the restriction of the Bergman kernel to the diagonal subset of $\Omega \times \Omega$. Then $Q^{(\lambda)}$ transforms according to the rule \eqref{Q}. If for some $\lambda > 0$, the Hilbert space $L_{\rm hol}^2(\Omega, Q^{(\lambda)}\,dV)$ determined by the measure is nontrivial, then the corresponding reproducing kernel is of the form $b(w)^\lambda \mathcal K(w)$. 

For the Euclidean ball, 
$$L_{\rm hol}^2(\mathbb B^m, Q^{(\lambda)}\,dV)\not = \{0\}$$ 
if and only if $\lambda > 1$. This means that the polarization of the real analytic function 
\begin{align*}
(1-\langle w,w \rangle ) (1-\langle w,w \rangle )^{-\lambda (n+1)} \mathcal K(w) &= (1-\langle w,w \rangle )^{-(\lambda - 1)(n+1)} \mathcal K(w)\\
&= (1-\langle w,w \rangle )^{-(\lambda + 1) (n+1)} \left( \langle e_p(w), e_q(w) \rangle 
\right)_{qp}
\end{align*} 
must also be positive definite. Since $\left( \langle e_p(w), e_q(z) \rangle \right )_{qp}$ is positive definite, it follows that $\lambda > -1$ will ensure positivity of the kernel 
$$
(1-\langle w,w \rangle )^{-\lambda (n+1)} \mathcal K(w) = b(w)^{\lambda + 2}  
{\rm Adj} (I_n - z w^*).
$$

\subsubsection{Classification} 
We have already described homogeneous operators in the Cowen-Douglas class $B_1(\mathbb D)$ using the curvature invariant. For operators in  $B_k(\mathbb D)$, $k >1$, the curvature alone does not determine the class of the operator.  Examples of irreducible homogeneous operators in $B_k(\mathbb D)$ were given in 
\cite{misra-KM2008} using an intertwining operator $\Gamma$.  For the complete classification, we recall the description of homogeneous vector bundles via holomorphic induction, see \cite{misra-Kir}. Making this explicit in our context, we were able to construct the intertwining operator $\Gamma$ in general \cite{misra-KM2011}.  Some of the details are reproduced  below from the  announcement \cite{misra-KM2009}. \index{homogeneous vector bundle}\index{intertwining operator}

Let $\mathfrak t \subseteq {\mathfrak g}^{\mathbb C}=\mathfrak s\mathfrak l(2,\mathbb C)$ be the algebra $ \mathbb C h + \mathbb C y$, where
$$ h = \frac{1}{2}\begin{pmatrix} 1 & 0\\ 0 & - 1 \end{pmatrix},\quad 
y = \begin{pmatrix} 0 & 0\\ 1 & 0 \end{pmatrix}.$$
Linear representations $(\varrho, V)$ of the algebra $\mathfrak t \subseteq 
{\mathfrak g}^{\mathbb C}=\mathfrak s\mathfrak l(2,\mathbb C)$, that is, pairs $\varrho(h), \varrho(y)$ of linear transformations satisfying $[\varrho(h),\varrho(y)] = - \varrho(y)$ provide a para-metrization of the homogeneous holomorphic vector bundles.

\index{universal covering group}\index{representation space}\index{stabilizer}
In obtaining the classification of the homogeneous operators, it is necessary to work with the universal covering group $\tilde{G}$ of the bi-holomorphic automorphism group $G$ of the unit disc. The $\tilde{G}$ - invariant Hermitian structures on the homogeneous holomorphic vector bundle $E$ (making it into a homogeneous holomorphic Hermitian vector bundle), if they exist, are given by $\varrho(\tilde{\mathbb K})$ - invariant inner products on the representation space.  Here $\tilde{\mathbb K}$ is the stabilizer of $0$ in $\tilde{G}$.

An inner product can  be $\varrho(\tilde{\mathbb K})$ - invariant if and only if $ \varrho(h)$ is diagonal with real diagonal elements in an appropriate basis.  We are interested only in Hermitizable bundles, that is, those that admit a Hermitian structure.  So, we will assume without restricting generality, that the representation space of $\varrho$ is $\mathbb C^n$ and that $\varrho(h)$ is a real diagonal matrix.

Since $[\varrho(h),\varrho(y)] = - \varrho(y)$, we have $\varrho(y) V_\lambda \subseteq V_{\lambda -1}$, where $V_\lambda = \{ \xi \in \mathbb C^n: \varrho(h) \xi = \lambda \xi\}$.
Hence $(\varrho, \mathbb C^n)$ is a  direct sum, orthogonal for every $\varrho(\tilde{K})$ - invariant inner   product of ``elementary'' representations,  that is, such that
$$ \varrho(h) = \begin{pmatrix}
-\eta I_0 & &\\
 &\ddots & \\
&&-(\eta + m) I_m
\end{pmatrix}\mbox{with}\,\, I_j = I\,\, \mbox{on}\,\, V_{-(\eta+j)} = \mathbb C^{d_j}$$
and
$$ Y:= \varrho(y) = \begin{pmatrix}
0 & & & \\
Y_1 & 0 & &\\
&Y_2&0 & &\\
&&\ddots&\ddots&\\
&&&Y_m&0
\end{pmatrix},\quad Y_j:V_{-(\eta+j-1)} \to V_{-(\eta+j)}.$$
We denote the corresponding elementary Hermitizable bundle by $E^{(\eta, Y)}$. 

\emph{The Multiplier and Hermitian  structures.}
As in \cite{misra-KM2011} we will use a natural trivialization of $E^{(\eta, Y)}$. In  this,  the sections of homogeneous holomorphic vector bundle $E^{(\eta, Y)}$ are holomorphic functions $\mathbb D$ taking values in $\mathbb C^n$.  The $\tilde{G}$ action is given by $f  \mapsto J_{g^{-1}}^{(\eta,  Y)} \big (f\circ g^{-1}\big )$ with multiplier
$$\left( J^{(\eta, Y)}_g(z))\right)_{p,\ell} =\begin{cases}
\tfrac{1}{(p-\ell)!} (-c_g)^{p-\ell}
(g^\prime)(z)^{\eta + \frac{p + \ell}{2}} Y_p \cdots Y_{\ell+1} &{~if~} p \geq \ell\\
0&{~if~} p < \ell \end{cases},
$$
where $c_g$ is  the analytic function on $\tilde{G}$ which, for $g$ near $e$, acting on $\mathbb D$ by $z \mapsto (a z + b) (cz +d)^{-1}$ agrees with $c$.
\begin{proposition}
We have $E^{(\eta, Y)}\equiv E^{(\eta^\prime, Y^\prime)}$ if and only if $\eta=\eta^\prime$ and $Y^\prime = AYA^{-1}$ with a block diagonal matrix $A$.
\end{proposition}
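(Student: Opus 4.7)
The plan is to translate equivalence of the homogeneous holomorphic vector bundles $E^{(\eta,Y)}$ and $E^{(\eta',Y')}$ into an equivalence of the underlying representations of the Lie algebra $\mathfrak{t}=\mathbb C h+\mathbb C y$, and then read off the two claimed conditions from the weight decomposition. By the holomorphic induction correspondence from \cite{misra-Kir}, a $\tilde G$-equivariant holomorphic bundle map $\Phi:E^{(\eta,Y)}\to E^{(\eta',Y')}$ is determined by its value $A:\mathbb C^n\to\mathbb C^n$ on the fibre over $0\in\mathbb D$, and $\Phi$ is an isomorphism of homogeneous holomorphic bundles precisely when $A$ is invertible and intertwines the two $\mathfrak{t}$-representations, that is, $A\varrho(h)=\varrho'(h)A$ and $AY=Y'A$. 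Conversely, any such $A$ reassembles, via the $\tilde G$-action by the multipliers $J^{(\eta,Y)}$, into a global $\tilde G$-equivariant bundle isomorphism.

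First, I would analyse the relation $A\varrho(h)=\varrho'(h)A$. Because $\varrho(h)$ is diagonal with eigenspace decomposition $\mathbb C^n=\bigoplus_{j=0}^m V_{-(\eta+j)}$ with $\dim V_{-(\eta+j)}=d_j$, and similarly for $\varrho'(h)$, the intertwiner $A$ must carry the $\lambda$-eigenspace of $\varrho(h)$ bijectively onto the $\lambda$-eigenspace of $\varrho'(h)$. Since in the elementary form the weight sequence $-\eta,-(\eta+1),\dots,-(\eta+m)$ appears with fixed decreasing order and the blocks $V_{-(\eta+j)}$ are the full weight spaces, matching these sequences forces $\eta=\eta'$ and $d_j=d_j'$ for every $j$. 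Consequently $A$ restricts to invertible linear maps $A_j:V_{-(\eta+j)}\to V_{-(\eta+j)}$, that is, $A$ is block diagonal with respect to the weight decomposition.

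With $\eta=\eta'$ and $A$ block diagonal in hand, the remaining relation $A\varrho(y)=\varrho'(y)A$ reads exactly $AY=Y'A$, equivalently $Y'=AYA^{-1}$, proving one direction. For the converse, suppose $\eta=\eta'$ and $Y'=AYA^{-1}$ for some block diagonal invertible $A$. Then $A$ commutes with $\varrho(h)$ (so $A\varrho(h)=\varrho'(h)A$ trivially) and by induction on $p-\ell$ the blockwise identity $A_pY_p\cdots Y_{\ell+1}=Y'_p\cdots Y'_{\ell+1}A_\ell$ holds; together with the observation that each scalar factor $(g')^{\eta+(p+\ell)/2}$ and each power of $c_g$ in the formula for $J^{(\eta,Y)}_g$ is independent of the matrix data, this yields $J^{(\eta',Y')}_g\,A=A\,J^{(\eta,Y)}_g$ for all $g\in\tilde G$, hence an equivalence of the homogeneous holomorphic vector bundles.

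The main obstacle is the first step, namely the assertion that a $\tilde G$-equivariant bundle equivalence is faithfully captured by the single intertwiner $A$ at $0$; this relies on the fact that holomorphic induction from representations of the stabilizer data $\mathfrak{t}$ to homogeneous holomorphic bundles on $\mathbb D$ is fully faithful on the elementary pieces considered here. Once that rigidity is granted, everything else reduces to linear algebra on weight spaces, as sketched above.
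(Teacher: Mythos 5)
Your argument is correct and follows the route the paper intends: the surrounding text sets up precisely the correspondence between elementary homogeneous holomorphic bundles and representations $(\varrho(h),\varrho(y))$ of $\mathfrak t$, under which a $\tilde{G}$-equivariant bundle isomorphism is determined by an invertible intertwiner $A$ on the fibre over $0$, and your weight-space analysis then forces $\eta=\eta'$, block-diagonality of $A$, and $Y'=AYA^{-1}$. The paper states the proposition without proof (deferring to the cited classification papers), and your converse verification via the explicit multiplier $J^{(\eta,Y)}_g$ is exactly the intended computation.
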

A Hermitian structure on $E^{(\eta, Y)}$ appears as the assignment of an inner product  $\langle \cdot , \cdot \rangle_z$ on  $\mathbb C^n$ for   $z \in \mathbb D$. We can write 
$$ \langle \zeta, \xi \rangle_z = \langle H(z) \zeta , \xi \rangle,\,\,\mbox{with}\,\, H(z) \succ 0.$$
Homogeneity as a Hermitian vector bundle is equivalent to
$$ J_g(z) H(g\cdot z)^{-1}J_g(z)^* = H(z)^{-1},\,\,g \in G,\,\, z\in \mathbb D.$$
The Hermitian   structure is then determined by $H=H(0)$ which is a positive block diagonal matrix.  We write $(E^{(\eta, Y)}, H)$ for the vector bundle $E^{(\eta, Y)}$ equipped with the
Hermitian structure $H$.  We note that $(E^{(\eta, Y)}, H) \cong (E^{(\eta, AYA^{-1})}, {A^*}^{-1}HA)$ for any block diagonal invertible $A$.  Therefore every homogeneous holomorphic Hermitian vector bundle is isomorphic with one of the form  $(E^{(\eta, Y)}, I)$.\index{block diagonal matrix}

If $E^{(\eta, Y)}$ has a reproducing kernel $K$ which
is the case for bundles corresponding to an operator in the  Cowen-Douglas class, 
then $K$ satisfies 
$$K(z,w) = J_g(z) K(g z , g w) J_g(w)^*$$
and induces a Hermitian structure $H$ given by $H(0) = K(0,0)^{-1}.$
\emph{Construction of the bundles with reproducing kernel.}
For $\lambda >0$, let $\mathbb A^{(\lambda)}$ be the Hilbert space of holomorphic functions on the unit disc with reproducing kernel $(1-z \bar{w})^{-2 \lambda}$.
It is  homogeneous under the multiplier  $g^\lambda(z)$ for the action of $\tilde{G}$. This gives a unitary representation of $\tilde{G}$.  Let  
$$\mathbf A^{(\eta)}= \oplus_{j=0}^m \mathbb A^{(\eta + j)} \otimes \mathbb C^{d_j}.$$  
For $f$ in $\mathbf A^{(\eta)}$, we denote by $f_j$, the part of $f$ in $\mathbb A^{(\eta + j)} \otimes \mathbb C^{d_j}$.  We define $\Gamma^{(\eta, Y)} f$ as the $\mathbb C^n$ - valued holomorphic function whose part in $\mathbb C^{d_\ell}$ is given by
$$ \big (\Gamma^{(\eta, Y)} f \big )_\ell = \sum_{j=0}^\ell\frac{1}{(\ell -j)!}\frac{1}{(2 \eta + 2 j)_{\ell -j}} Y_\ell \cdots Y_{j+1} f_j^{(\ell -j)}$$
for $\ell \geq j$. For invertible block diagonal $N$ on $\mathbb C^n$, we also define 
$\Gamma_N^{(\eta, Y)}:= \Gamma^{(\eta, Y)} \circ N$.  It can be verified that $ \Gamma_N^{(\eta, Y)}$ is a $\tilde{G}$ -   equivariant isomorphism of $\mathbf A^{(\eta)}$ as a homogeneous holomorphic vector bundle onto $E^{(\eta, Y)}$.  The image $K_N^{(\eta, Y)}$ of the reproducing kernel of $\mathbf A^{(\eta)}$ is then a reproducing kernel for $E^{(\eta, Y)}$.  A computation gives that $K^{(\eta,Y)}_N(0,0)$ is a block diagonal matrix such that its $\ell$'th block is
$$
K^{(\eta, Y)}_N(0,0)_{\ell,\ell} = \sum_{j=0}^\ell \frac{1}{(\ell -j)!}\frac{1}{(2\eta + 2j)_{\ell-j}} Y_\ell\cdots Y_{j+1} N_jN_j^* Y_{j+1}^* \cdots Y_\ell^*.
$$
We set 
$$H^{(\eta,Y)}_N = {K_N^{(\eta, Y)}(0,0)}^{-1}.$$  
We have now constructed a family $(E^{(\eta, Y)}, H^{(\eta,Y)}_N)$ of elementary homogeneous holomorphic vector bundles with a reproducing kernel  ($\eta >0$, $Y$ as before, $N$ invertible block diagonal).

\begin{theorem} (\cite[Theorem 3.2]{misra-KM2011}
Every elementary homogeneous holomorphic vector bundle $E$ with a reproducing kernel arises
from the construction given above.
\end{theorem}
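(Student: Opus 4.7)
The plan is to recover any such bundle $E$ by first identifying the underlying $\tilde G$-representation on its Hilbert space of sections, then matching the equivariant isomorphism with the explicit $\Gamma^{(\eta,Y)}_N$ built above.

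\textbf{Step 1: The representation on the space of sections.} Since $E$ is elementary homogeneous with a reproducing kernel $K$, the bundle is isomorphic to $E^{(\eta,Y)}$ for some pair $(\eta, Y)$ of the form described, and $K$ is automatically $\tilde G$-quasi-invariant: $K(z,w)=J^{(\eta,Y)}_g(z)K(g\cdot z,g\cdot w)J^{(\eta,Y)}_g(w)^*$. As explained in the passage on quasi-invariant kernels, this forces the operators $U_g f:=J^{(\eta,Y)}_g(f\circ g)$ to be unitary on the reproducing kernel Hilbert space $\mathcal H_K$ of sections, giving a unitary representation of $\tilde G$ on $\mathcal H_K$.

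\textbf{Step 2: K-type decomposition.} Let $\tilde{\mathbb K}$ denote the stabilizer of $0$ in $\tilde G$; its generator is (after complexification) $h$. Because $\varrho(h)$ is diagonal with eigenvalue $-(\eta+j)$ on $V_{-(\eta+j)}\cong\mathbb C^{d_j}$, the Taylor expansion at $0$ of a section decomposes $\mathcal H_K$ into $\tilde{\mathbb K}$-weight spaces supported on weights $\le -\eta$. The lowest $\tilde{\mathbb K}$-type has weight $-\eta$ and multiplicity $d_0$; more generally the lowest $\tilde{\mathbb K}$-type in the $j$th summand of $E^{(\eta,Y)}$ has weight $-(\eta+j)$ and multiplicity $d_j$.

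\textbf{Step 3: Identification with $\mathbf A^{(\eta)}$.} By Harish-Chandra's classification of holomorphic discrete series of $\tilde G$, an irreducible unitary representation of $\tilde G$ with a lowest $\tilde{\mathbb K}$-weight $-\lambda<0$ is realized on $\mathbb A^{(\lambda)}$. Since each $V_{-(\eta+j)}$ is $\tilde{\mathbb K}$-invariant (and since $\varrho(y)$ lowers weights by $1$, it generates from each $V_{-(\eta+j-1)}$ into the higher weight components), the cyclic $\tilde G$-submodule generated by the lowest weight vectors in $V_{-(\eta+j)}$ at $0$ is isomorphic to $\mathbb A^{(\eta+j)}\otimes\mathbb C^{d_j}$. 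Comparison of $\tilde{\mathbb K}$-multiplicities on both sides shows that the sum of these submodules exhausts $\mathcal H_K$, so $\mathcal H_K\cong \bigoplus_{j=0}^m \mathbb A^{(\eta+j)}\otimes\mathbb C^{d_j}=\mathbf A^{(\eta)}$ as unitary $\tilde G$-modules.

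\textbf{Step 4: Explicit form of the intertwiner.} Let $\Gamma:\mathbf A^{(\eta)}\to\mathcal H_K$ be a $\tilde G$-equivariant unitary. Any such $\Gamma$ is determined up to multiplication on the right by a $\tilde{\mathbb K}$-equivariant operator, i.e. a block diagonal invertible $N$ on $\mathbb C^n$. Equivariance under the nilpotent element $y$, together with the fact that the action of $y$ on the holomorphic model $\mathbb A^{(\eta+j)}$ is by the first order differential operator that raises weights, forces, by induction on $\ell-j$, the recursion
$$
(\Gamma f)_\ell = \sum_{j=0}^\ell \frac{1}{(\ell-j)!\,(2\eta+2j)_{\ell-j}}\, Y_\ell\cdots Y_{j+1}\, f_j^{(\ell-j)},
$$
once an initial identification $N$ on the lowest weight spaces is fixed. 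Thus $\Gamma=\Gamma^{(\eta,Y)}_N$ as constructed in the excerpt.

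\textbf{Step 5: The reproducing kernel.} Because $\Gamma^{(\eta,Y)}_N$ is a unitary isomorphism onto $\mathcal H_K$, it carries the reproducing kernel of $\mathbf A^{(\eta)}$ to a reproducing kernel of $\mathcal H_K$; by uniqueness this must coincide with $K$, and so $(E,K)=(E^{(\eta,Y)},K^{(\eta,Y)}_N)$ with $H=H^{(\eta,Y)}_N=K^{(\eta,Y)}_N(0,0)^{-1}$.

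\textbf{Main obstacle.} The computational heart is Step 4: turning the abstract Lie-algebraic equivariance $[\varrho(h),\varrho(y)]=-\varrho(y)$ together with the explicit nilpotent structure of $Y$ into the precise combinatorial formula with the Pochhammer denominators $(2\eta+2j)_{\ell-j}$. This requires using the explicit action of $y$ on the holomorphic discrete series $\mathbb A^{(\lambda)}$ (as $\tfrac{d}{dz}$ times a scalar in the $SU(1,1)$ normalization) and tracking how it transports through the block structure imposed by $Y$; the normalization of $N$ then absorbs the remaining freedom.
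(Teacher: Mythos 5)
Your proposal is correct and follows essentially the same route as the paper's (very brief) sketch: identify $E$ with some $E^{(\eta,Y)}$, observe that the reproducing kernel makes the $\tilde G$-action on sections a unitary representation, use the equivariant map $\Gamma^{(\eta,Y)}$ from $\mathbf A^{(\eta)}$, and invoke Schur's Lemma to produce the block diagonal $N$ making it an isometry. The only difference is one of direction: the paper takes the explicit $\Gamma^{(\eta,Y)}$ as already constructed and merely asserts that it intertwines, whereas you re-derive its Pochhammer-coefficient formula from $y$-equivariance (your Step 4) after first matching $\tilde{\mathbb K}$-types abstractly; that computation is exactly what the paper defers to \cite{misra-KM2011}.
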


\begin{proof} ({\bf Sketch of proof})
As a homogeneous bundle $E$ is isomorphic to some $E^{(\eta, Y)}$.   Its reproducing kernel gives a Hilbert space structure in which the $\tilde{G}$ action on the sections of $E^{(\eta, Y)}$ is a unitary representation $U$.  Now $\Gamma^{(\eta, Y)}$ intertwines the unitary representation of $\tilde{G}$ on $\mathbb A^{(\eta)}$ with $U$.  The existence of a block diagonal $N$ such that $\Gamma_N^{(\eta, Y)} = \Gamma^{(\eta, Y)} \circ N$ is a Hilbert space isometry follows from Schur's Lemma.\index{Schur's Lemma}
\end{proof}

As remarked before, every homogeneous holomorphic Hermitian vector bundle is isomorphic to an $(E^{(\eta,Y)}, I)$, here $Y$ is unique up to conjugation by a block unitary.  In this form, it is easy to tell whether the bundle is irreducible:  this is the case if and only if $Y$ is not the orthogonal direct sum of two matrices of the same block type as $Y$. We call such a $Y$ irreducible.

Let $\mathcal P$ be the set of all $(\eta, Y)$ such that $E^{(\eta, Y)}$ has a reproducing kernel.  Using the formula for $K_N^{(\eta, Y)}(0,0)$ we can write down explicit systems of inequalities that determine whether $(\eta, Y)$ is in $\mathcal P$.    In particular we have
\begin{proposition}
For every $Y$, there exists a $\eta_Y > 0$ such that $(\eta, Y)$ is in $\mathcal P$ if and only if $\eta>\eta_Y $.
\end{proposition}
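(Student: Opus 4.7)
The plan is as follows. After fixing the normalization $H=I$ of the Hermitian structure (permissible via the isomorphism $(E^{(\eta,Y)},H)\cong (E^{(\eta, AYA^{-1})}, A^{*-1}HA^{-1})$ for block-diagonal $A$), the condition $(\eta, Y)\in\mathcal P$ amounts to the existence of an invertible block-diagonal $N$ with $K_N^{(\eta,Y)}(0,0) = I$. Reading off the diagonal block $\ell$ of the displayed formula for $K_N^{(\eta,Y)}(0,0)_{\ell,\ell}$ converts this into a recursive determination of $P_\ell := N_\ell N_\ell^*$:
\begin{equation*}
P_0 = I,\qquad P_\ell = I - \sum_{j=0}^{\ell-1}\frac{1}{(\ell-j)!\,(2\eta+2j)_{\ell-j}}\,Y_\ell\cdots Y_{j+1}\,P_j\,Y_{j+1}^*\cdots Y_\ell^*\quad (\ell\geq 1),
\end{equation*}
and $(\eta,Y)\in\mathcal P$ becomes equivalent to the finite list of strict positivity conditions $P_\ell(\eta, Y)\succ 0$ for $\ell = 1,\ldots, m$.

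I would first establish non-emptiness of $\mathcal P_Y := \{\eta > 0 : (\eta,Y)\in\mathcal P\}$. Each coefficient $1/(2\eta+2j)_{\ell-j}$ with $j<\ell$ is $O(\eta^{j-\ell})$ as $\eta\to\infty$, so inductively $P_\ell(\eta,Y)\to I$; by continuity of the rational functions $\eta\mapsto P_\ell(\eta,Y)$ on $(0,\infty)$, every $P_\ell$ is positive definite for all sufficiently large $\eta$.

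The heart of the argument is the monotone half-line property: if $\eta\in\mathcal P_Y$ and $s>0$, then $\eta + s\in\mathcal P_Y$. Rather than grinding through the recursion, I would argue this by Schur-multiplying the kernel itself. Suppose $K$ is a positive definite reproducing kernel for $E^{(\eta, Y)}$, so $K(z,w) = J_g^{(\eta, Y)}(z)\,K(gz, gw)\,J_g^{(\eta, Y)}(w)^*$ for every $g\in\tilde G$. Set $K'(z,w) := (1-z\bar w)^{-2s}\,K(z,w)$. The scalar factor $(1-z\bar w)^{-2s}$ is positive definite on $\mathbb D$ (it is the reproducing kernel of $\mathbb A^{(s)}$), and the pointwise product of a positive definite scalar kernel with a positive definite matrix-valued kernel is again positive definite. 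The explicit formula for $J_g^{(\eta, Y)}$ shows, entry by entry, that $J_g^{(\eta+s, Y)}(z) = (g'(z))^s\,J_g^{(\eta, Y)}(z)$, and a direct Möbius computation yields the automorphy identity $(g'(z))^s\,(1-gz\,\overline{gw})^{-2s}\,\overline{(g'(w))^s} = (1-z\bar w)^{-2s}$. Combining the two, $K'$ satisfies the cocycle relation for $E^{(\eta+s, Y)}$ and is therefore a reproducing kernel for that bundle; hence $\eta+s\in\mathcal P_Y$.

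Set $\eta_Y := \inf\mathcal P_Y$; this is a well-defined nonnegative real number by the non-emptiness step. The defining inequalities are strict and the maps $\eta\mapsto P_\ell(\eta, Y)$ are continuous, so $\mathcal P_Y$ is open in $(0,\infty)$, whence $\eta_Y\notin\mathcal P_Y$; combined with the half-line property this forces $\mathcal P_Y = (\eta_Y,\infty)$, which is the claim. The genuine obstacle is the Schur-product step: one must pin down the branch of $(g'(z))^s$, working on the universal cover $\tilde G$ rather than on $G$, so that the automorphy identity is unambiguous; once this bookkeeping is discharged the half-line property is essentially immediate.
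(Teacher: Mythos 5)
Your argument is correct, and it supplies a complete proof where the survey offers none: the paper merely asserts the proposition after remarking that the formula for $K_N^{(\eta,Y)}(0,0)$ yields ``explicit systems of inequalities,'' deferring all details to \cite{misra-KM2011}. Your reduction of membership in $\mathcal P$ to the recursion $P_0=I$, $P_\ell = I - \sum_{j<\ell}\tfrac{1}{(\ell-j)!\,(2\eta+2j)_{\ell-j}}\,Y_\ell\cdots Y_{j+1}P_jY_{j+1}^*\cdots Y_\ell^*$ with each $P_\ell\succ 0$ is exactly the system the paper has in mind (the diagonal blocks of $K_N^{(\eta,Y)}(0,0)=I$ determine $P_\ell=N_\ell N_\ell^*$ uniquely, and an invertible $N_\ell$ exists precisely when $P_\ell\succ0$), and the limit $P_\ell\to I$ as $\eta\to\infty$ correctly gives non-emptiness. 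The step that genuinely requires an idea is the half-line property, because the recursion is not visibly monotone in $\eta$ --- the coefficients decrease but the $P_j$ they multiply can increase, so no termwise comparison works --- and your Schur-product argument is the right move: $(1-z\bar w)^{-2s}$ is positive definite, the identity $J_g^{(\eta+s,Y)}=(g')^sJ_g^{(\eta,Y)}$ is immediate from the displayed multiplier, the automorphy relation $(g'(z))^s(1-g(z)\overline{g(w)})^{-2s}\overline{(g'(w))^s}=(1-z\bar w)^{-2s}$ holds once the branch of $(g')^s$ is fixed on $\tilde G$, and the Schur product leaves the value at $(0,0)$ unchanged, so the normalization $K(0,0)=I$ survives. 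This tensoring mechanism is in substance how the cited reference argues, so your route agrees with the source even though it is invisible in the present paper.

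Two small points. First, make explicit that the kernel you feed into the Schur product satisfies $K(0,0)=I$; the definition of $\mathcal P$ is only non-vacuous when read relative to the Hermitian structure $H=I$, as your normalization step implicitly assumes. Second, your argument delivers $\mathcal P_Y=(\eta_Y,\infty)$ with $\eta_Y=\inf\mathcal P_Y\geq 0$, but not the strict positivity $\eta_Y>0$ asserted in the statement. That strict positivity in fact fails for arbitrary $Y$ (e.g.\ $Y_1=0$ with the remaining blocks small makes every $P_\ell\succ0$ for all $\eta>0$); when $Y_1\neq 0$, which holds for irreducible $Y$, it follows in one line from $P_1=I-\tfrac{1}{2\eta}Y_1Y_1^*$, which forces $2\eta>\|Y_1\|^2$ and hence $\eta_Y\geq \tfrac{1}{2}\|Y_1\|^2>0$. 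Adding that sentence would close the only gap between what you prove and what is claimed.
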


Finally, we obtain the desired classification.

\begin{theorem}(\cite[Theorem 4.1]{misra-KM2011})
All the homogeneous holomorphic Hermitian vector bundles of rank $n$ with a reproducing kernel correspond to homogeneous operators in the Cowen -- Douglas class ${\rm B}_n(\mathbb D)$. The irreducible ones are the adjoint of the multiplication operator $M$ on  the Hilbert space of sections of $(E^{(\eta, Y)},I)$ for some $(\eta, Y)$ in $\mathcal P$ and irreducible $Y$.  The block matrix $Y$ is determined up to conjugacy by block diagonal unitaries.
\end{theorem}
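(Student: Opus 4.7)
The plan is to deduce this classification by stitching together the Cowen--Douglas correspondence, the direct sum decomposition of homogeneous Hermitizable bundles into elementary pieces, and the previous theorem (Theorem 3.2) identifying the elementary bundles with reproducing kernel. First I would set up the correspondence: by the Cowen--Douglas theorem recalled at the start of the article, an operator $T$ in $B_n(\mathbb D)$ is determined up to unitary equivalence by its bundle $E_T$, and $T$ is homogeneous precisely when $E_T$ is $\tilde G$-homogeneous as a holomorphic Hermitian bundle. Conversely, given a homogeneous holomorphic Hermitian rank $n$ bundle $E$ whose space of sections has a reproducing kernel, the adjoint of multiplication by the coordinate function on the section space lies in $B_n(\mathbb D)$, and the multiplier action $f\mapsto J_g(f\circ g^{-1})$ is, by quasi-invariance of the kernel, a unitary $U_g$ implementing $g(T)=U_g^*TU_g$. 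This reduces the theorem to a classification of homogeneous holomorphic Hermitian rank $n$ bundles admitting a reproducing kernel.

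Next I would invoke the preceding structural analysis: on such a bundle the lift $\varrho$ of the $\mathfrak t$-action must have $\varrho(h)$ diagonalizable with real eigenvalues in a $\varrho(\tilde{\mathbb K})$-orthogonal basis, so the bundle splits as an orthogonal direct sum of elementary pieces of the form $(E^{(\eta,Y)},H)$. Operator irreducibility is equivalent to bundle irreducibility; irreducibility forces this decomposition to have one summand and forces $Y$ itself to be irreducible in the sense that it is not the orthogonal direct sum of two matrices of the same block type. Theorem 3.2 now applies to that single elementary summand: every elementary homogeneous holomorphic Hermitian bundle with a reproducing kernel arises from the explicit construction as $(E^{(\eta,Y)},H^{(\eta,Y)}_N)$ for some block diagonal invertible $N$, with $(\eta,Y)\in\mathcal P$. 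Finally, using the isomorphism $(E^{(\eta,Y)},H)\cong (E^{(\eta,AYA^{-1})},{A^*}^{-1}HA^{-1})$ for any block diagonal invertible $A$, I would choose $A$ so that $A^*H^{(\eta,Y)}_N A=I$, thereby normalizing the Hermitian structure to the identity and absorbing $N$ into a new $Y$. This shows $T^*$ is (unitarily equivalent to) multiplication by the coordinate on the section Hilbert space of some $(E^{(\eta,Y)},I)$ with $(\eta,Y)\in\mathcal P$ and $Y$ irreducible.

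The main obstacle is the uniqueness statement: that $Y$ is determined up to conjugation by a block diagonal unitary. I would argue as follows. A bundle isomorphism $(E^{(\eta,Y)},I)\cong (E^{(\eta',Y')},I)$ is a fiberwise linear map commuting with $\tilde{\mathbb K}$-action, hence with $\varrho(h)$; since distinct weight spaces of $\varrho(h)$ live in distinct blocks, the intertwiner is block diagonal, say $A$. Matching the lowest weights forces $\eta=\eta'$. Equivariance under $\varrho(y)$ then forces $AY=Y'A$, i.e.\ $Y'=AYA^{-1}$, while preservation of the Hermitian structure at the origin, $A^*IA=I$, forces $A$ to be block unitary. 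Combined with Schur's lemma (used to identify the intertwining operator $\Gamma^{(\eta,Y)}_N$ up to the block diagonal $N$, as in the sketch of Theorem 3.2), this rigidity pins down $(\eta,Y)$ up to the claimed equivalence and completes the classification.
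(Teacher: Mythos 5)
Your reduction of the classification to the structural results --- the splitting of a Hermitizable homogeneous bundle into elementary pieces $E^{(\eta,Y)}$, the use of Theorem 3.2 to realize each elementary piece with reproducing kernel as $(E^{(\eta,Y)},H^{(\eta,Y)}_N)$, the normalization of the Hermitian structure to $I$ by a block diagonal change of frame, and the rigidity argument pinning down $Y$ up to block unitary conjugacy --- is consistent with the discussion surrounding the theorem in the paper and is essentially sound. But there is a genuine gap at the very first step, and it is precisely the step to which the paper devotes its entire proof sketch. You assert that ``given a homogeneous holomorphic Hermitian rank $n$ bundle $E$ whose space of sections has a reproducing kernel, the adjoint of multiplication by the coordinate function on the section space lies in $B_n(\mathbb D)$.'' This is not automatic and is the main analytic content of the theorem: the existence of a reproducing kernel guarantees only that point evaluations are bounded, not that the multiplication operator $M$ is a bounded operator on the section space, and without boundedness there is no operator in $B_n(\mathbb D)$ to speak of. The Cowen--Douglas correspondence you invoke runs from operators to bundles; running it backwards from a bundle-with-kernel to a bounded operator is exactly what must be established here.

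The paper's argument for this point is concrete and quantitative: one transplants the explicit orthonormal system of $\mathbf A^{(\eta)}=\oplus_j \mathbb A^{(\eta+j)}\otimes\mathbb C^{d_j}$ to the section space of $E^{(\eta,Y)}$ via the equivariant isomorphism $\Gamma^{(\eta,Y)}$, observes that in this basis $M$ acts as a block shift with blocks $M_n:\mathcal H(n)\to\mathcal H(n+1)$ satisfying $M_n\sim I+O(\tfrac{1}{n})$, and concludes that $M$ is an ordinary block shift plus a Hilbert--Schmidt operator, hence bounded, with its adjoint in ${\rm B}_n(\mathbb D)$. None of this appears in your proposal. To repair it you would need to either reproduce this asymptotic estimate on the matrix entries of $M$ in the transplanted basis, or supply some other argument for boundedness; quasi-invariance of the kernel and unitarity of the multiplier action do not by themselves yield it. The remainder of your argument (irreducibility forcing a single elementary summand with irreducible $Y$, and the uniqueness of $Y$ up to block diagonal unitary conjugation via the block structure of any intertwiner commuting with $\varrho(h)$ and $\varrho(y)$) can stand once this is supplied.
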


\begin{proof} ({\bf Sketch of proof})
There is a simple orthonormal system for the Hilbert space  $\mathbb A^{(\lambda)}$.  Hence we can find such a system for $\mathbf A^{(\eta)}$ as well.  Transplant it using  
$\Gamma^{(\eta, Y)}$ to   $E^{(\eta,  Y)}$. The multiplication operator in this basis has 
a block diagonal form with   
$$M_n:= M_{|\,{\rm res\,\,} \mathcal H(n)}: \mathcal H(n) \to \mathcal H(n+1).$$ 
This description is sufficiently explicit to see: 
$$ M_n \sim I + 0(\frac{1}{n}).$$ 
Hence $M$ is the sum of an ordinary  block shift operator and a Hilbert Schmidt 
operator. This completes the proof.\index{block shift operator}
\index{Hilbert-Schmidt operator}
\end{proof}

\emph{The general case.} \index{Wilkins operator}
The first examples of homogeneous operators in several variables were given in 
\cite{misra-MSJOT}. This was followed by a detailed study of these operators in the paper \cite{misra-twist} for tube domains and in full generality in the paper \cite{misra-AZ}. First examples of homogeneous operators in $B_2(\mathbb D)$ were given in \cite{misra-DW}. A class of homogeneous operators in $B_n(\mathbb D)$, which we called, generalized Wilkins operators were described in \cite{misra-BMsurvey} using the \emph{jet construction}.  The paper \cite{misra-PZ} gives a class of homogeneous operators in $B_n(\mathcal D)$, where $\mathcal D$ is one of the classical bounded symmetric domains, using the decomposition of a tensor product of two discrete series representations.  

The essential ingredients from the case of the automorphism group of the unit disc is now available for an arbitrary bounded symmetric domain. In particular, the intertwining operator $\Gamma$ has been found explicitly. This gives a complete classification of homogeneous operators in the Cowen-Douglas class of the ball.  In general, while a description of the homogeneous holomorphic vector bundles is given in the paper \cite{misra-KM2018}, it hasn't been possible to describe the operators as explicitly as in the case of the ball.  
These results were announced in \cite{misra-KM2016} and complete proofs now appear in \cite{misra-KM2018}.  A different approach to finding a class of homogeneous holomorphic vector bundles is in \cite{misra-MU}.  

As one might expect several questions remain open, for instance, which commuting tuple of homogeneous operators are subnormal, which of them induce a contractive, or completely contractive homomorphism.  Some of these questions have been studied in 
\cite{misra-twist, misra-AZ}. In a different direction, the class of quasi-homogeneous operators introduced in \cite{misra-JJM}  containing all the homogeneous operators shares many of the properties of the smaller class of homogeneous operator.  The Halmos question on similarity of polynomially bounded operators to a contraction has an affirmative answer for the quasi-homogeneous operators. 

\subsection{Quotient and sub-modules} 
In an attempt to generalize the very successful model theory of Sz.-Nagy and Foias for contractions to other settings, R. G. Douglas reformulated the question in the language of Hilbert modules over a function algebra. We describe below some aspects of this reformulation and its consequences \index{quasi-free Hilbert module}
focussing on the quotient modules for the class of quasi-free Hilbert modules $\mathcal M$  introduced in \cite{misra-quasi}. These Hilbert modules are obtained by taking the completion of the polynomial ring $\mathbb C \mathbb[\boldsymbol z]$ with respect to some inner product.  We consider a class of sub-modules $\mathcal S_k \subseteq \mathcal M$ which consist of all functions in $\mathcal M$ that vanish to some fixed order $k$ on a hypersurface $\mathcal Z$ contained in $\Omega$. Let us recall some of the definitions (cf. \cite[Section 1.2]{misra-qmtrans}).

\begin{enumerate} 
\item[(1)] A \emph{ hypersurface} is a complex sub-manifold of complex dimension
$m-1$, that is, a subset $\mathcal Z \subseteq \Omega$ is a hypersurface
if for any fixed $z \in {\mathcal Z}$, there exists a
neighbourhood $U \subseteq \Omega$ of $z$ and a local defining
function $\varphi$ for $U\cap {\mathcal Z}$. 

\item[(2)] A \emph{ local defining function} $\varphi$ is a holomorphic map $\varphi: U
\to \mathbb{C}$ such that $U \cap {\mathcal Z} = \{z \in U: \varphi(z) = 0 \}$
and $\tfrac{f}{\varphi}$ is holomorphic on $U$ whenever $f_{| U\cap
  \mathcal Z} =0$. In particular, this implies that the gradient of
$\varphi$ doesn't vanish on $\mathcal Z$ and that any two defining
functions for $\mathcal Z$ must differ by a unit.

\item[(3)] A function $f$ is said to be \emph{ vanishing to order $k$} on the hypersurface
$\mathcal Z$ if $f = \varphi^n g$ for some $n\geq k$, a holomorphic function
$g$ on $U$ and a defining function $\varphi$ of $\mathcal Z$.  The order of
vanishing on $\mathcal Z$ of a holomorphic function $f: \Omega \to \mathbb C$ does
not depend on the choice of the local defining function.\index{hypersurface}
This definition can also be framed in terms of the partial derivatives 
normal to $\mathcal Z$.\index{local defining function}
\end{enumerate}
We have seen that an extension of a result due to Aronszajn's provides a model for the quotient module when the sub-module consist of the maximal set of all functions vanishing on a hyper-surface. However, if the sub-module is taken to be all functions vanishing to order $k > 1$, then the situation is different and one must introduce a matrix valued kernel via the jet construction.\index{jet construction}

\subsubsection{The jet construction} 
Let $\mathcal S_k \subseteq \mathcal M$ be a sub-module of a quasi-free Hilbert module over the algebra $\mathcal A(\Omega)$ consisting of functions vanishing to order $k$ on a hyper-surface $\mathcal Z\subseteq \Omega$. Let $\partial$ denote the differentiation along the unit normal to the hyper-surface $\mathcal{Z}$.  Let $J: \mathcal{M}
\to \mathcal {M} \otimes \mathbb{C}^k$ defined by 
$$
h \mapsto (h, \partial h, \partial^2 h, \ldots , \partial^{k-1} h), ~
h\in \mathcal{M}
$$  
be the jet of order $k$. 
Transplanting the inner product from $\mathcal M$ on $\mathcal {M} \otimes \mathbb{C}^k$ via the map $J$, we see that 
$$\{(e_n, \partial e_n, \ldots , \partial^{k-1}e_n)_{n \geq 0}:
(e_n)_{n\geq 0} \mbox {~is an orthonormal basis in $\mathcal{M}$~}\}$$ is
an orthonormal basis in $J(\mathcal{M}) \subseteq \mathcal{M}\otimes \mathbb{C}^k$. This makes the jet map $J$ isometric.  Now, it is not hard to see that  the sub-module $J\mathcal S_k \subseteq J\mathcal M$  consisting of the maximal set of functions in $J\mathcal M$ vanishing on $\mathcal Z$ is exactly the image under the map $J$ of the sub-module $\mathcal S_k \subseteq \mathcal M$. It is shown in \cite{misra-DMV} that the quotient module 
$\mathcal{Q}= {\mathcal M}/{\mathcal S_k}$ is isomorphic to  
${(J\mathcal{M})}/{(J\mathcal{S}_k)}$.  Thus we are reduced
to the multiplicity free case  and it follows that the quotient module $\mathcal{Q}$ is  
the restriction of $J\mathcal{M}$ to the hypersurface $\mathcal{Z}$. To complete the description, we must provide a model. 

The module $J\mathcal{M}$ possesses a reproducing kernel $JK$, which is the infinite sum 
$$
JK(z,w) = \sum_{n=0}^\infty (Je_n)(z) (Je_n)(w)^*,~z,w \in \Omega.
$$
From this it follows that $JK:\Omega \times \Omega \to \mathcal M_k(\mathbb C)$ is of the form 
\begin{equation}\label{jet kernel}
(JK)_{\ell,j}(z,w) =\big (\partial^{\ell}\bar{\partial}^{j}K \big )
(z,w),\,\, 0\leq \ell,j \leq k-1.
\end{equation}
The module multiplication on $J\mathcal{M}$ is then naturally obtained by requiring that the map $J$ be a module map. Thus setting $Jf$ to be the array
\begin{equation} \label{jet action}
(Jf)_{\ell,j} = \begin{cases}  \binom{\ell}{j} (\partial^{\ell-j}f),& 0\leq
\ell\leq j \leq k-1\\ 0 & \mbox{otherwise,} \end{cases}, 
\end{equation}
$f \in \mathcal{A}(\Omega)$.  The module multiplication is induced by the multiplication operator $M_{J_f}$, where 
$J_f$ is a holomorphic function defined on $\Omega$ taking values in $\mathcal M_k(\mathbb C)$. 
The adjoint of this operator is easy to compute 
\begin{equation} \label{adjoint action}
J_f^* JK(\cdot,w) \cdot \boldsymbol x = JK(\cdot,w)  (Jf)(w)^*\cdot \boldsymbol x, ~\boldsymbol x \in \mathbb{C}^k.
\end{equation} 

We consider the Hilbert space $(J{\mathcal M})_{\rm res}$ obtained by
restricting the functions in $J{\mathcal M}$ to the hyper-surface ${\mathcal Z}$,
that is,
$$
(J{\mathcal M})_{\rm res}=\left\{ \boldsymbol h_0 \mbox{\rm~holomorphic on~} {\mathcal Z}:  
\boldsymbol h_0=\boldsymbol h_{|\,\mathcal Z} \mbox{ for some } \boldsymbol h\in J{\mathcal M}\right\}.   
$$
The norm of $\boldsymbol h_0$ in $(J{\mathcal M})_{\rm res}$ is the quotient norm, that is, 
$$
\|\boldsymbol h_0\|=\inf\, \big \{\|\boldsymbol h\|:\boldsymbol h_{|\,\mathcal Z} = \boldsymbol h_0 \mbox{ for } \boldsymbol h \in J{\mathcal M} \big \},
$$
and the module action is obtained by restricting the map $(f, \boldsymbol h) \to (J_f)_{|\,\mathcal Z} \,\boldsymbol h_{|\,\mathcal Z}$. 

We have discussed the restriction map $R$ before and observed that it is a unitary module map in the case of Hilbert modules consisting of scalar valued
holomorphic functions. However, this is true of the vector valued case as well. It shows that$JK(\cdot,w)_{\rm res} := K(\cdot ,w)_{|\,\mathcal Z}$, $w\in {\mathcal Z}$ is the kernel function for the Hilbert module $(J{\mathcal M})_{\rm res}$. 
\begin{theorem}(\cite[Theorem 3.3]{misra-DMV}) 
Let ${\mathcal M}$ be a Hilbert module over the algebra ${\mathcal
A}(\Omega)$. If ${\mathcal S}_k$ is the sub-module of functions vanishing to order $k$, then the quotient module is isomorphic to $(J{\mathcal M})_{\rm res}$  
via the isometric module map $J\,R$. 
\end{theorem}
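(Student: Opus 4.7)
The plan is to factor the desired isomorphism through the jet transform and then invoke the vector-valued analogue of Aronszajn's restriction theorem stated earlier in the paper. First I would unpack what it means for $f\in\mathcal M$ to lie in $\mathcal S_k$: by the definition of vanishing to order $k$ given in item (3) above, this is equivalent to the vanishing of $\partial^j f$ on $\mathcal Z$ for every $0\le j\le k-1$. Translated through the jet map $J$, this says precisely that $Jf$, viewed as a $\mathbb C^k$-valued holomorphic function on $\Omega$, has zero restriction to $\mathcal Z$. Therefore
\[
J(\mathcal S_k)=\bigl\{\boldsymbol g\in J\mathcal M:\boldsymbol g|_{\mathcal Z}=0\bigr\},
\]
so $J(\mathcal S_k)$ is exactly the maximal submodule of $J\mathcal M$ whose elements vanish on $\mathcal Z$ (now only to order one, since we have absorbed the higher-order information into the extra vector coordinates).

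Next I would check that $J$ is an isometric module map when $J\mathcal M$ is equipped with the module action $f\cdot\boldsymbol h=M_{J_f}\boldsymbol h$ defined by \eqref{jet action}. The isometry is immediate, since the inner product on $J\mathcal M$ is transplanted from $\mathcal M$ and the orthonormal basis $\{Je_n\}$ is inherited from an orthonormal basis $\{e_n\}$ of $\mathcal M$. To verify that $J(fh)=J_f\cdot (Jh)$ for $f\in\mathcal A(\Omega)$ and $h\in\mathcal M$, one simply reads off the $\ell$th coordinate of each side and recognises the Leibniz rule
\[
\partial^{\ell}(fh)=\sum_{j=0}^{\ell}\binom{\ell}{j}\bigl(\partial^{\ell-j}f\bigr)\bigl(\partial^{j}h\bigr),
\]
which is exactly the formula for $J_f$ acting on $Jh$. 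Together with the previous paragraph, this gives a unitary module isomorphism $\mathcal M/\mathcal S_k\cong (J\mathcal M)/J(\mathcal S_k)$.

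Finally I would apply Aronszajn's theorem, as quoted in the excerpt, to the $\mathcal M_k(\mathbb C)$-valued reproducing kernel $JK$ displayed in \eqref{jet kernel}. That theorem asserts that the restriction map $R:\boldsymbol h\mapsto \boldsymbol h|_{\mathcal Z}$ is an isometric module isomorphism from the quotient of $J\mathcal M$ by the maximal submodule of sections vanishing on $\mathcal Z$ onto $(J\mathcal M)_{\rm res}$, and that the reproducing kernel of $(J\mathcal M)_{\rm res}$ is $JK|_{\mathcal Z\times\mathcal Z}$. Combined with the identification of $J(\mathcal S_k)$ from the first step, this yields an isometric module isomorphism $(J\mathcal M)/J(\mathcal S_k)\xrightarrow{\ R\ }(J\mathcal M)_{\rm res}$, and composing with $J$ produces the claimed isomorphism $\mathcal Q=\mathcal M/\mathcal S_k\cong (J\mathcal M)_{\rm res}$ implemented by the composite map.

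The main obstacle I anticipate is the careful verification that the module action is preserved through all three passages: the compression of $M_f$ to $\mathcal S_k^\perp$, the multiplication by the upper triangular matrix $J_f$ on $J\mathcal M$ (which is \textbf{not} coordinatewise multiplication), and the pointwise action of $(J_f)|_{\mathcal Z}$ on $(J\mathcal M)_{\rm res}$. The Leibniz identity handles the middle intertwining; on the restriction side one checks compatibility on the dense span of the kernel sections using \eqref{adjoint action}, since $J_f^*$ acts on $JK(\cdot,w)\boldsymbol x$ by multiplication with $(Jf)(w)^{*}\boldsymbol x$, a datum that depends only on the jet of $f$ at $w\in\mathcal Z$ and therefore descends to the restriction unambiguously.
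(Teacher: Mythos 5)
Your proposal is correct and follows essentially the same route as the paper: identify $J(\mathcal S_k)$ with the maximal submodule of $J\mathcal M$ vanishing on $\mathcal Z$ (reducing to the multiplicity-free case), check that $J$ is an isometric module map via the Leibniz rule underlying \eqref{jet action}, and then invoke the vector-valued extension of Aronszajn's restriction theorem to identify $(J\mathcal M)/(J\mathcal S_k)$ with $(J\mathcal M)_{\rm res}$. The points you flag as needing care --- in particular that the module action on $J\mathcal M$ is by the triangular matrix symbol $J_f$ rather than coordinatewise, and that compatibility with restriction is checked on the dense span of kernel sections via \eqref{adjoint action} --- are exactly the verifications the paper leaves implicit.
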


Finding invariants for the quotient module, except in the case of $k=1$, is more challenging. The module multiplication in the quotient module involves both a semi-simple and a nilpotent part. The semi-simple part lies typically in some $B_n(\mathcal Z)$. Now, any equivalence between two quotient modules must also intertwine the nilpotent action.  In the papers \cite{misra-PIASeqHM, misra-qmtrans}, using this additional structure, a complete invariants were found for a class of quotient modules.    We describe some fascinating possibilities for finding invariant for quotient modules using the notion of module tensor products and the recent work of Harvey and Lawson.

Let ${\mathcal M}$ and ${\mathcal N}$ be any two
Hilbert modules over the algebra ${\mathcal A}$.  Notice that there are
two possible module actions on ${\mathcal M}\otimes {\mathcal N}$, i.e., the
left action: $L \otimes I: (f,h\otimes k) \mapsto f\cdot h \otimes k$
and the right action: $I \otimes R: (f,h\otimes k) \mapsto h \otimes f
\cdot k$.  The module tensor product ${\mathcal M} \otimes_{\mathcal A} {\mathcal
N}$ is defined to be the module obtained by identifying these two
actions. Specifically, let ${\mathcal S}$ be the closed subspace of ${\mathcal
M} \otimes {\mathcal N}$ generated by vectors of the form
$$\{f\cdot h \otimes k - h\otimes f\cdot k : h\in {\mathcal M}, k\in {\mathcal
N}\mbox{ and } f\in {\mathcal A}\}.$$ Then ${\mathcal S}$ is a submodule of
${\mathcal M} \otimes {\mathcal N}$ with respect to both the left and the
right actions.  The module tensor product ${\mathcal M}\otimes_{\mathcal A}
{\mathcal N}$ is defined to be ${(\mathcal M \otimes \mathcal N)}/
{\mathcal S}$ together with the compression of either the left or the
right actions, which coincide on this space.  For fixed $w \in
\Omega$, ${\mathbb C}$ is a module over ${\mathcal A}$ with the module action
$$ (f,v) \mapsto f(w) v, ~ f\in {\mathcal A}, v\in {\mathbb C}.$$ Let ${\mathbb 
C}_w$ denote the one dimensional module ${\Bbb C}$ with this action.
We will largely confine ourselves to the module tensor product ${\mathcal
M} \otimes_{\mathcal A} {\mathbb C}_w$, which we denote by ${\mathcal M}(w)$.  

Localizing the short exact sequence 
$$0 \longleftarrow  {\mathcal Q} \longleftarrow
{\mathcal M} \stackrel{X}{\longleftarrow} {\mathcal S} \longleftarrow 0$$
using the one dimensional module $\mathbb C_w$, one obtains a new  short 
exact sequence of spectral sheaves (cf. \cite{misra-DP})  
$$ 0 \longleftarrow \mathcal Q\otimes_{\mathcal A(\Omega)} \mathbb{C}_w
\longleftarrow \mathcal M\otimes_{\mathcal A(\Omega)} \mathbb{C}_w\stackrel{X
\otimes_{\mathcal A(\Omega)}1_{\mathbb C_w}}{\longleftarrow} \mathcal
S \otimes_{\mathcal A(\Omega)} \mathbb{C}_w \longleftarrow 0.
$$
Let $E_0$ and $E$ be the holomorphic line bundles corresponding to the modules $\mathcal S$ 
and $\mathcal{M}$ and $\mathcal K_{E_0}$, $\mathcal K_E$ be their  curvatures, respectively.  
It is shown in \cite{misra-rgdgm} that the class of the alternating sum 
$$
\sum_{i,j=1}^m \frac{\partial^2}{\partial w_{i}
\partial\bar{w}_{j}} \log (X(w)^*X(w)) dw_i \wedge d\bar{w}_j
- {\mathcal K}_{E_0}(w) + {\mathcal K}_E(w), 
$$ 
is the fundamental class of the hypersurface $\mathcal Z$. This
identification makes essential use of the Poincar\'{e}-Lelong formula.
\index{Poincar\'{e}-Lelong formula}

It is not clear how one can obtain such an alternating sum if the
submodule $\mathcal{S}$ is not assumed to be the submodule (maximal set) of functions
vanshing on $\mathcal{Z}$. 
A possible approach to this question using some ideas of Doanldson appearing in \cite{misra-Don} is discussed in \cite{misra-DMV}.
An  adaptation of the results of Harvey and Lawson \cite{misra-HL} to
the present situation may be fruitful in the case where $\mathcal{S}$ is 
assumed to consist of all functions in $\mathcal{M}$ which vanish to order $k$.
Let ${\mathcal E}$ and ${\mathcal E}_0$ be the vector bundles obtained
by localization (cf. \cite{misra-DP}) from the modules ${\mathcal M}$ and
${\mathcal M}_0$ and let $\phi$ be an ad-invariant polynomial (in
particular, a Chern form) in the respective curvatures ${\mathcal K}$
and ${\mathcal K}_0$.  Then the work of Harvey and Lawson \cite{misra-HL} on
singular connections gives a mechanism for studying these bundles
since the natural connection on the bundle ${\mathcal M}_0$ is a
singular one.  They obtain a relation of the form
$$
\phi({\mathcal K}) - \phi({\mathcal K}_0) = {\mbox Res}_\phi [{\mathcal Z}] + dT_\phi,
$$
where ${\mbox Res}_\phi[{\mathcal Z}]$ is a `residue' form related to the 
zero set and $T_\phi$ is a transgression current.\index{transgression current}
This incorporates a generalized Poincar\'{e}-Lelong formula which
played a crucial role in the study of the quotient module in the rank one
case, see \cite{misra-DMV}.
\subsubsection{The Clebsch-Gorden formula}
Let $\mathcal M_1$ and $\mathcal M_2$ be
Hilbert spaces consisting of of holomorphic functions defined on $\Omega$ possessing reproducing kernels $K_1$ and $K_2,$ respectively. 
Assume that the natural action of $\mathbb C[\boldsymbol{z}]$ on
the Hilbert space $\mathcal M_1$ is continuous, that is, the map $(p, h) \to ph$ defines a
bounded operator on $\mathcal M$ for $p \in \mathbb C[\boldsymbol{z}].$ (We make no such assumption about
the Hilbert space $\mathcal M_2.$) Now, $\mathbb C[\boldsymbol{z}]$ acts naturally on the Hilbert space tensorproduct $\mathcal M_1\otimes \mathcal M_2$ via the map
$$(p, (h \otimes k)) \to p h \otimes k, p \in \mathbb C[\boldsymbol{z}],\,\, h\in\mathcal M_1,\,\, k \in \mathcal M_2.$$
\!\!\! The map $h \otimes k \to hk$ identifies the Hilbert space $\mathcal M_1\otimes \mathcal M_2$ as a reproducing
kernel Hilbert space of holomorphic functions on $\Omega\times \Omega.$ The module action
is then the point-wise multiplication $(p, hk) \to (ph)k$, where 
$$((ph)k)(\boldsymbol z_1, \boldsymbol z_2) = p(\boldsymbol z_1)h(\boldsymbol z_1)k(\boldsymbol z_2),\quad \boldsymbol z_1, \boldsymbol z_2 \in \Omega.$$

Let $\mathcal M$ be the Hilbert module $\mathcal M_1\otimes \mathcal M_2$
over $\mathbb C[\boldsymbol{z}]$. Let $\triangle \subseteq \Omega \times \Omega$ be the diagonal subset
$\{(z, z): z \in \Omega\}$ of $\Omega \times \Omega.$ Let $\mathcal S$ be the maximal submodule of functions in
$\mathcal M_1\otimes \mathcal M_2$ which vanish on $\triangle.$ Thus
{
$$0 \to \mathcal S \stackrel{X}{\to} \mathcal M_1\otimes \mathcal M_2
\stackrel{Y}{\to} \mathcal Q \to 0$$}
is a short exact sequence, where $\mathcal Q = (\mathcal M_1\otimes \mathcal M_2)/\mathcal S,$ $X$ is the inclusion map
and $Y$ is the natural quotient map.

As we have seen earlier in Section 1.4, the theorem of Aronszajn provides a complete description of the quotient module $\mathcal Q$ as the restrictions of functions in $\mathcal M$.
Now, let us investigate what happens if the submodule $\mathcal S$ is taken to be space  of functions vanishing to  order $2$ on $\triangle.$ Set  $\mathcal{H}_1$ and $\mathcal{H}_2$ to be the submodules defined by
$$\mathcal{H}_1=\{f\in(\mathcal{H}, K)\otimes (\mathcal{H}, K):f|_{\Delta}=0\}$$
and        
$$\mathcal{H}_2=\{f\in(\mathcal{H}, K)\otimes (\mathcal{H}, K):f|_{\Delta}=\partial_1f|_{\Delta}=\partial_2f|_{\Delta}=...=\partial_mf|_{\Delta}=0\}.$$
Let $\mathcal{H}_{11}=\mathcal{H}_2^\perp\ominus\mathcal{H}_1^\perp $
We have already described the quotient module $\mathcal H_{00}:=\mathcal M\ominus\mathcal H_1$, 
where $\mathcal M = (\mathcal H,K) \otimes (\mathcal H,K).$ This is the module $\mathcal M_{\rm res}.$ Set
$$\widetilde{K}(z,w)= \left(K^2(z,w)\partial_i\bar{\partial}_j 
\log K(z,w) \right )_{1\leq i,j\leq m}.$$
We claim that the function $\tilde{K}$ taking values in $\mathcal M_m(\mathbb{C})$ is a non-negative definite kernel.
To see this, set
$$\phi_i(w):= K_w\otimes\bar{\partial}_iK_w-\bar{\partial}_iK_w\otimes K_w, 
\quad 1\leq i\leq m,$$    
and note that each $\phi_i:\Omega \to \mathcal M$ is holomorphic. A simple calculation then shows that 
$$ \left( \! \left ( \langle\phi_j(w),\phi_i(z)\rangle_{\mathcal{M}} \right)\!\right ) = \widetilde{K}(z,w).$$
How to describe  the Hilbert space, or more importantly, the Hilbert module  $(\mathcal{H} ,\widetilde{K})$? May be, it is a quotient, or a sub-quotient module,  of the Hilbert module $(\mathcal H, K) \otimes (\mathcal H,K)?$ 

Let $\mathcal{H}_0$ be the subspace of $(\mathcal{H}, K)\otimes (\mathcal{H}, K)$ given by the smallest closed subspace containing the linear span of the vectors $\{\phi_i(w):w\in \Omega,1\leq i\leq m \}.$ 
From this definition,  it is not clear which functions belong to the subspace.  An  interesting limit computation given below shows that it coincides with $\mathcal H_{11}$. 

The point of what we have said so far is that we can explicitly describe the Hilbert modules $\mathcal{H}_2^\perp$and $\mathcal{H}_1^\perp $, up to an isomorphism of modules. Using the  
jet construction followed by the restriction map, one may also describe the direct sum 
$\mathcal{H}_2^\perp\oplus\mathcal{H}_1^\perp,$ again up to an isomorphism. 

But what is the module $\mathcal H_{11}$?  To answer this question (see \cite[Section 2.4.1]{misra-SG}), one must find the kernel function for $\mathcal H_{11}$. Setting $K_1$ to be the kernel function of the module $\mathcal H_1,$ indeed, we have 
 
$$
\lim_{{u\to z,}\\{v\to w}}\frac{K_1(z,u;v,w)}{(z-u)(\bar{w}-\bar{v})}
= \tfrac{1}{2} K(z,w)^2\partial\bar{\partial} \log K(z,w).
$$

This shows that $\mathcal H_{11}$ is isomorphic to $(\mathcal H, \tilde{K})$. The main challenge is to obtain an orthogonal decomposition of the Hilbert module $\mathcal M$ in the form of a composition series, namely, to complete the decomposition:  
$$\mathcal H \sim \mathcal H_{00} \oplus \mathcal H_{11} \oplus \mathcal H_{22} \oplus \cdots,$$
where $\mathcal H_{00}$ is the quotient module  $\mathcal M_{\rm res},$
as in the more familiar Clebsch-Gorden formula.\index{Clebsch-Gorden formula} 

\subsubsection{The sheaf model}For a Hilbert module $\mathcal M$ over a function algebra $\mathcal A(\Omega)$, not necessarily in the class $\mathrm{B}_1(\Omega)$, motivated by the correspondence of vector bundles with locally free
sheaf, we  construct a sheaf of modules $\mathcal S^{\mathcal M}(\Omega)$ over $\mathcal O(\Omega)$ corresponding to $\mathcal M$.\index{locally free sheaf}
We assume that $\mathcal M$ possesses all the properties for it to be in the class $\mathrm{B}_1(\Omega)$ except that the dimension of the joint kernel $\mathbb K(w)$ need not be constant.
We note that sheaf models have occured, as a very useful tool, in the study of analytic Hilbert modules (cf. \cite{misra-EP}).  Although, the model we describe  below is somewhat different.  

A Hilbert module $\mathcal M\subset \mathcal O(\Omega)$ is said to be in 
the class $\mathfrak{B}_1(\Omega)$ if  it possesses a reproducing kernel $K$ (we don't rule out the possibility: $K(w,w)=0$ for  $w$ in some closed subset $X$
of $\Omega$) and the dimension of $\mathcal M/\mathfrak m_w\mathcal M$ is finite for all $w\in \Omega.$

Most of the examples in $\mathfrak{B}_1(\Omega)$ arises in the form
of a submodule of some Hilbert module $\mathcal H (\subseteq \mathcal
O(\Omega))$ in the Cowen-Douglas class $\mathrm{B}_1(\Omega).$
Are there others?

Let $\mathcal S^{\mathcal M}(\Omega)$ be the subsheaf of the sheaf of holomorphic functions $\mathcal O(\Omega)$ whose stalk at $w \in \Omega$ is
$$
\big \{(f_1)_w \mathcal O_w + \cdots + (f_n)_w \mathcal O_w : f_1, \ldots , f_n \in \mathcal M \big \}, 
$$
or equivalently,
$$\mathcal S^{\mathcal M}(U) = \Big\{\sum_{i=1}^n \big ( {f_i}_{| U} \big) g_i: f_i \in \mathcal M, g_i \in \mathcal O(U) \Big \}$$
for $U$ open in $\Omega$.

\begin{proposition}
The sheaf ${\mathcal S}^{\mathcal M}(\Omega)$ is coherent.
\end{proposition}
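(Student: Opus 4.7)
The plan is to reduce coherence of $\mathcal{S}^{\mathcal{M}}$ to local finite generation, and then invoke Oka's coherence theorem. Since $\mathcal{S}^{\mathcal{M}}$ is by construction an $\mathcal{O}_\Omega$-submodule of the coherent sheaf $\mathcal{O}_\Omega$, any locally finitely generated such submodule is automatically coherent (Oka's theorem providing the finite generation of the relations sheaf). Thus the task reduces to producing, around each $w_0 \in \Omega$, a neighbourhood $U$ and finitely many sections of $\mathcal{S}^{\mathcal{M}}(U)$ whose germs generate $\mathcal{S}^{\mathcal{M}}_z$ at every $z \in U$.

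First I would establish generation at a single stalk. Fix $w_0 \in \Omega$ and set $n:=\dim_{\mathbb{C}}\mathcal{M}/\mathfrak{m}_{w_0}\mathcal{M}$, which is finite by hypothesis. Choose $f_1,\ldots,f_n \in \mathcal{M}$ whose images form a basis of $\mathcal{M}/\mathfrak{m}_{w_0}\mathcal{M}$. The local ring $\mathcal{O}_{w_0}$ of holomorphic germs is Noetherian, so the submodule $\mathcal{S}^{\mathcal{M}}_{w_0} \subseteq \mathcal{O}_{w_0}$ is automatically finitely generated as an $\mathcal{O}_{w_0}$-module. The decomposition $\mathcal{M} = \mathrm{span}_{\mathbb{C}}\{f_1,\ldots,f_n\} + \mathfrak{m}_{w_0}\mathcal{M}$, read at the germ level via the identity $\mathfrak{m}_{w_0}\mathcal{M}=\sum_i (z_i - (w_0)_i)\,\mathcal{M}$, shows that every $\sum_j \varphi_j (h_j)_{w_0}$ is congruent modulo $\mathfrak{m}_{w_0}\mathcal{S}^{\mathcal{M}}_{w_0}$ to a $\mathbb{C}$-linear combination of the $(f_i)_{w_0}$; hence $(f_1)_{w_0},\ldots,(f_n)_{w_0}$ generate $\mathcal{S}^{\mathcal{M}}_{w_0}$ modulo $\mathfrak{m}_{w_0}\mathcal{S}^{\mathcal{M}}_{w_0}$. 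Nakayama's lemma in $\mathcal{O}_{w_0}$ then promotes this to genuine generation of $\mathcal{S}^{\mathcal{M}}_{w_0}$. Letting $\mathcal{I}\subseteq\mathcal{O}_\Omega$ denote the $\mathcal{O}$-submodule generated by the global sections $f_1,\ldots,f_n$, Oka's theorem gives that $\mathcal{I}$ is coherent and we obtain $\mathcal{I}_{w_0}=\mathcal{S}^{\mathcal{M}}_{w_0}$.

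Second, to spread the equality $\mathcal{I}_{w_0}=\mathcal{S}^{\mathcal{M}}_{w_0}$ to a full neighbourhood of $w_0$ I would invoke the upper semi-continuity of $z\mapsto\dim\mathbb{K}(z)$ (equivalently of $\dim\mathcal{M}/\mathfrak{m}_z\mathcal{M}$), which is standard for the holomorphic family of Koszul-type maps $D_{\boldsymbol M - z}$: off a closed analytic jump locus, the dimension stays at most $n$ and the images of $f_1,\ldots,f_n$ continue to span $\mathcal{M}/\mathfrak{m}_z\mathcal{M}$, so reapplying the stalkwise argument of the previous paragraph at each such $z$ yields $\mathcal{I}_z=\mathcal{S}^{\mathcal{M}}_z$. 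At a point of the jump locus one enlarges the generating family by finitely many sections chosen to span the (larger) quotient there and repeats the construction. Since coherence is a local property and both strata admit such finite local generators, $\mathcal{S}^{\mathcal{M}}$ is locally finitely generated, hence coherent.

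The substantive analytic point, and the hard part, is the spreading step: the paper explicitly allows $\dim\mathbb{K}(z)$ to be non-constant on $\Omega$, so the closed exceptional stratum requires its own choice of generators, and one must verify that both choices together yield a finite generating set in a uniform neighbourhood of $w_0$. Everything else---Noetherianness of $\mathcal{O}_{w_0}$, Nakayama's lemma, and Oka's coherence theorem---is structural and formal.
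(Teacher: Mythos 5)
Your overall strategy (local finite generation plus Oka) is reasonable, but the proof as written has a genuine gap precisely at the step you yourself flag as the hard one, and your proposed resolution does not close it. The stalkwise part is essentially fine in outcome --- since $\mathcal O_{w_0}$ is Noetherian, the stalk $\mathcal S^{\mathcal M}_{w_0}$ is generated by the germs of finitely many elements of $\mathcal M$ --- although your Nakayama argument quietly identifies the Hilbert-module product $\mathfrak m_{w_0}\mathcal M$ (a closed linear span, hence containing norm limits) with the algebraic module $\sum_i(z_i-w_{0,i})\mathcal M$ at the level of germs; passing a Hilbert-space limit into a finitely generated submodule of $\mathcal O_{w_0}$ needs Cartan's closure theorem and should not be elided. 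The real problem is the spreading step. Having found $f_1,\dots,f_\ell\in\mathcal M$ whose germs generate the stalk at $w_0$, you must produce a \emph{fixed} neighbourhood $U$ on which the same finite family generates $\mathcal S^{\mathcal M}_z$ for every $z\in U$. For each individual $g\in\mathcal M$ the relation $g_{w_0}\in\sum_i(f_i)_{w_0}\mathcal O_{w_0}$ propagates to some neighbourhood $U_g$, but $\mathcal M$ is infinite-dimensional and the $U_g$ may shrink to $\{w_0\}$. Stratifying by the jump locus of $\dim\mathbb K(z)$ does not address this: the uniformity problem is already present on the open stratum where the dimension is constant, and, separately, you give no argument that the images of $f_1,\dots,f_n$ continue to span $\mathcal M/\mathfrak m_z\mathcal M$ when that dimension drops at nearby $z$.

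The paper closes exactly this hole with a single tool you do not invoke. For each finite subset $J\subseteq\mathcal M$ the subsheaf $\mathcal S^{\mathcal M}_J$ generated by $J$ is coherent (Oka, as you say), and $\mathcal S^{\mathcal M}=\bigcup_J\mathcal S^{\mathcal M}_J$ is an increasing filtered union of coherent subsheaves of $\mathcal O_\Omega$; Noether's lemma (Grauert--Remmert) asserts that such a family is locally stationary, so near any point a single $J$ already exhausts the union, and coherence follows. If you wish to keep your Nakayama-based analysis you still need Noether's lemma, or an equivalent locally stationary statement, to obtain the uniform neighbourhood; once you have it, the stalkwise computation is no longer needed for coherence itself, though it is what later gives the bound on the number of generators of the stalk by $\dim\mathbb K(w)$.
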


\begin{proof}
The sheaf ${\mathcal S}^{\mathcal M}(\Omega)$ is generated by the set of functions 
$\{f: f\in \mathcal M\}$.   Let $\mathcal S_J^{\mathcal M}(\Omega)$ be the subsheaf generated by the set of functions 
$$J=\{f_1, \ldots , f_\ell\} \subseteq \mathcal M \subseteq \mathcal O(\Omega).$$ 
Thus $\mathcal S_J^{\mathcal M}(\Omega)$ is coherent.  An application of Noether's Lemma \cite{misra-GR} then guarantees that 
$$\mathcal S^{\mathcal M}(\Omega) = \cup_{J\, \mbox{\rm finite}} \mathcal S_J^{\mathcal M}(\Omega)$$ 
is coherent.
\end{proof}\index{coherent sheaf}

We note that the coherence of the sheaf implies, in particular,  that
the stalk $(\mathcal S^{\mathcal M})_w$ at $w\in \Omega$ is generated by a finite number of elements $g_1,  \ldots ,g_n$ from $\mathcal O(\Omega)$.

If $K$ is the reproducing kernel for $\mathcal M$ and $w_0\in \Omega$ is a fixed but arbitrary point, then for $w$ in a small neighborhood $U$ of $w_0$, we obtain the following decomposition theorem.
\begin{theorem}
Suppose $g^0_i,\,1\leq i \leq n,$ be a minimal set of generators for the stalk 
$(\mathcal S^{\mathcal M})_0:=(\mathcal S^{\mathcal M})_{w_0}$.
Then we have
$$
K(\cdot, w):=K_w = g^0_1(w)K^{(1)}_w + \cdots + g^0_n(w)K^{(n)}_w,  
$$
where $K^{(p)}:U \to \mathcal M$ defined by $w \mapsto K_w^{(p)}$,  $1 \leq p \leq n,$ is anti-holomorphic. Moreover, the elements $K^{(p)}_{w_0},\, 1\leq p \leq n$ are
linearly independent in $\mathcal M$, they are eigenvectors for the adjoint of the action of $\mathcal A(\Omega)$ on the Hilbert module $\mathcal M$ at $w_0$ and are uniquely determined by these generators.
\end{theorem}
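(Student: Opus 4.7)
The plan is to manufacture the vectors $K^{(p)}_w$ as Riesz representatives of ``coefficient'' functionals built intrinsically from the minimal generators, and then deduce every claim in the theorem as an almost-formal consequence of this construction. The only genuinely non-formal step will be establishing the continuity of those functionals.

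\emph{Step 1: the coefficient functionals.} Since $g_1^0,\dots,g_n^0$ is a minimal generating set of the $\mathcal O_{w_0}$-module $\mathcal S^{\mathcal M}_{w_0}$, Nakayama's lemma tells us that their residues form a basis of $\mathcal S^{\mathcal M}_{w_0}/\mathfrak m_{w_0}\mathcal S^{\mathcal M}_{w_0}$. Every $f\in\mathcal M$ restricts to a germ in $\mathcal S^{\mathcal M}_{w_0}$, so admits a (non-unique) representation $f=\sum_p h_p^f g_p^0$ locally near $w_0$; minimality forces the scalar values $\lambda_p(f):=h_p^f(w_0)$ to be uniquely determined by $f$, giving well-defined linear functionals $\lambda_p\colon\mathcal M\to\mathbb C$.

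\emph{Step 2: continuity and Riesz.} I would establish that each $\lambda_p$ is continuous by a closed-graph argument modeled on Remark \ref{bdder}: the coefficient $h_p^f(w_0)$ can be recovered as an explicit linear combination of a finite jet $\{(\partial^\alpha f)(w_0)\}$, and each such jet is given by pairing $f$ against the element $\bar\partial^\alpha K(\cdot,w_0)\eta\in\mathcal M$. Continuity then produces, via Riesz, unique $K^{(p)}_{w_0}\in\mathcal M$ with $\lambda_p(f)=\langle f,K^{(p)}_{w_0}\rangle$. The same construction performed at nearby $w\in U$ (where coherence of $\mathcal S^{\mathcal M}$ ensures $g_1^0,\dots,g_n^0$ remains a minimal generating set of the stalk, possibly after shrinking $U$) yields $\lambda_p^w$ and $K^{(p)}_w$; antiholomorphicity of $w\mapsto K^{(p)}_w$ is exactly the conjugate-linear image, under Riesz, of the holomorphic dependence $w\mapsto\lambda_p^w(f)=h_p^f(w)$. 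The genuine obstacle is this continuity: algebraically the $\lambda_p$ are determined by Nakayama, but bounding them in the Hilbert norm requires combining coherence of $\mathcal S^{\mathcal M}$ with the reproducing-kernel structure to control the finite-order Taylor data of $f$ at $w_0$ uniformly.

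\emph{Step 3: reading off the conclusions.} The eigenvector property is the direct calculation $\lambda_p(z_jf)=(z_jh_p^f)(w_0)=w_{0,j}\,\lambda_p(f)$, which translates via Riesz to $M_j^*K^{(p)}_{w_0}=\overline{w_{0,j}}\,K^{(p)}_{w_0}$, so each $K^{(p)}_{w_0}$ lies in the joint kernel of the adjoints. Linear independence of $\{K^{(p)}_{w_0}\}$ is dual to linear independence of the $\lambda_p$, which is another restatement of Nakayama minimality, and uniqueness of the $K^{(p)}_{w_0}$ given the generators is built into the Riesz step. Finally, the decomposition $K_w=\sum_p g_p^0(w)K^{(p)}_w$ is verified by pairing both sides against an arbitrary $f\in\mathcal M$: the left side gives $f(w)$ by the reproducing property, while the right side gives $\sum_p g_p^0(w)\lambda_p^w(f)=\sum_p g_p^0(w)h_p^f(w)$, and these coincide by the local expansion of $f$ at $w$ (with the conjugations inherent in sesquilinearity absorbed into the convention used in the excerpt).
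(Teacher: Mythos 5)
Your Step 1 and everything you deduce at the single point $w_0$ are essentially right: well-definedness of the coefficient functionals $\lambda_p$ via Nakayama, the eigenvector computation, linear independence, and uniqueness of $K^{(p)}_{w_0}$ all follow as you say. Your continuity argument is also headed in the right direction, but the key assertion --- that $h_p^f(w_0)$ is a linear combination of a \emph{finite} jet of $f$ at $w_0$ --- is exactly the point that needs proof: it requires knowing that $\mathfrak m_{w_0}^N\cap(\mathcal S^{\mathcal M})_{w_0}\subseteq \mathfrak m_{w_0}(\mathcal S^{\mathcal M})_{w_0}$ for $N$ large, which is the Artin--Rees lemma in the Noetherian local ring $\mathcal O_{w_0}$. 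Once that is in place, continuity follows directly from Remark \ref{bdder} and no closed-graph argument is needed; as written you conflate the two strategies and justify neither.

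The genuine gap is in Step 2, where you propagate the construction to nearby points. It is false in general that $g_1^0,\dots,g_n^0$ remains a \emph{minimal} generating set of the stalk at $w$ near $w_0$: coherence guarantees only that these germs generate nearby stalks, while the minimal number of generators $\dim (\mathcal S^{\mathcal M})_w/\mathfrak m_w(\mathcal S^{\mathcal M})_w$ is merely upper semi-continuous and typically drops off $w_0$. The paper's own example $H^2_0(\mathbb D^2)$ is a counterexample: the stalk at $(0,0)$ requires the two generators $z_1,z_2$, whereas at $w\neq(0,0)$ the stalk is all of $\mathcal O_w$ and requires one, and the text states explicitly that ``there are several choices for $K^{(1)}_w$ and $K^{(2)}_w$'' for $w\neq 0$. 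Consequently your functionals $\lambda_p^w$ are not well defined for $w\neq w_0$, and the pointwise Riesz construction does not produce the anti-holomorphic family $w\mapsto K^{(p)}_w$ on $U$ --- which is precisely the hard part of the theorem. What is needed is a \emph{choice} of solution of the division problem $K_w=\sum_p g_p^0(w)K^{(p)}_w$ made simultaneously for all $w$ in a (possibly shrunken) neighbourhood, depending anti-holomorphically on $w$ and boundedly on the data; this is where coherence must be used in earnest (closedness of the submodule generated by the $g_p^0$ and the existence of bounded, or privileged, local solutions of $f=\sum_p f_p\,g_p^0$), not merely to transport minimality. Your final verification of the decomposition identity inherits the same defect, since it pairs against the undefined $\lambda_p^w$.
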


We also point out that the Grammian 
$$G(w) = (\!(\langle {K^{(p)}_{w}}, {K^{(q)}_{w}}\rangle)\!)_{p,q=1}^n$$ 
is invertible in a small neighborhood of $w_0$ and is independent of the generators $g_1, \ldots, g_n$. Thus  \index{Grassmannian}
$$t: w \mapsto (K^{(1)}_{\bar{w}}, \ldots , K^{(n)}_{\bar{w}})$$ 
defines a holomorphic map into the Grassmannian $\mathrm G(\mathcal H,n)$ on the open set $U^*$.  The pull-back $E_0$ of the canonical bundle on $\mathrm G(\mathcal H,n)$ under this map then define a holomorphic Hermitian bundle on $U^*$.  Clearly, the decomposition of $K$ given in our Theorem is not canonical in anyway.  So, we can't expect the corresponding vector bundle $E_0$ to reflect the properties of the Hilbert module $\mathcal M$.  However, it is possible to obtain a canonical decomposition following the construction in \cite{misra-CS}. It then turns out that the equivalence class of the corresponding vector bundle $E_0$ obtained from this canonical decomposition is an invariant for the isomorphism class of the Hilbert module $\mathcal M$.  These invariants are by no means easy to compute.  At the end of this subsection, we indicate, how to construct invariants which are more easily computable. 
For now, the following Corollary to the decomposition theorem is immediate.

\begin{corollary}
The dimension of the joint kernel $\mathbb K(w)$ is greater or equal to the number of minimal generators of the stalk $(\mathcal S^{\mathcal M})_w$ at $w\in \Omega$.
\end{corollary}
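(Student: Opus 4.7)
The plan is to deduce the inequality directly from the Decomposition Theorem stated just above, which does essentially all the work. Fix an arbitrary point $w_0 \in \Omega$ and let $n$ denote the minimal number of generators of the stalk $(\mathcal S^{\mathcal M})_{w_0}$. To prove $\dim \mathbb K(w_0) \geq n$, it suffices to exhibit $n$ linearly independent vectors of $\mathcal M$ lying in the joint kernel $\mathbb K(w_0) = \cap_{i=1}^m \ker (M_{z_i - w_{0,i}})^*$.

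First, I would invoke the Decomposition Theorem with this choice of $w_0$. It supplies a minimal system $g^0_1, \ldots, g^0_n$ of generators of the stalk and a corresponding family $K^{(1)}, \ldots, K^{(n)}$ of anti-holomorphic $\mathcal M$-valued functions on a neighborhood $U$ of $w_0$ satisfying the decomposition
\[
K_w \;=\; g^0_1(w) K^{(1)}_w + \cdots + g^0_n(w) K^{(n)}_w, \qquad w \in U.
\]
The content of the theorem that matters here is its assertion that (i) the vectors $K^{(1)}_{w_0}, \ldots, K^{(n)}_{w_0}$ are linearly independent in $\mathcal M$, and (ii) each $K^{(p)}_{w_0}$ is an eigenvector at $w_0$ for the adjoint of the action of every $f \in \mathcal A(\Omega)$ on $\mathcal M$, that is, $M_f^* K^{(p)}_{w_0} = \overline{f(w_0)}\, K^{(p)}_{w_0}$.

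Next, I would specialize (ii) to the coordinate functions $f_i(z) = z_i - w_{0,i}$, which all lie in $\mathcal A(\Omega)$ and vanish at $w_0$. This immediately gives $(M_{z_i} - w_{0,i} I)^* K^{(p)}_{w_0} = 0$ for every $i$ and every $p$, so each $K^{(p)}_{w_0}$ lies in the joint kernel $\mathbb K(w_0)$. Combined with the linear independence in (i), this yields $n$ linearly independent elements of $\mathbb K(w_0)$, hence $\dim \mathbb K(w_0) \geq n$, as required.

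There is no substantial obstacle; the corollary is genuinely an immediate consequence once one unpacks the eigenvector conclusion of the Decomposition Theorem. The only step that even deserves comment is the passage from ``joint eigenvector for the action of all of $\mathcal A(\Omega)$'' to membership in $\mathbb K(w_0)$, but this is a one-line verification using the coordinate functions as above. The depth of the statement lies entirely in the Decomposition Theorem itself (which relies on coherence of $\mathcal S^{\mathcal M}$ and the reproducing kernel structure), and this corollary simply reads off the dimension bound it provides.
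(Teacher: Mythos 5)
Your argument is correct and is precisely the unpacking of what the paper leaves implicit: the paper simply declares the corollary ``immediate'' from the decomposition theorem, and your route --- taking the linearly independent eigenvectors $K^{(1)}_{w_0},\ldots,K^{(n)}_{w_0}$ supplied by that theorem and specializing the adjoint eigenvector property to the coordinate functions to place them in $\mathbb K(w_0)$ --- is exactly the intended reading. Nothing further is needed.
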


Now is the appropriate time to raise a basic question. Let $\mathfrak m_w \subseteq \mathcal A(\Omega)$ be the maximal ideal of functions vanishing at $w$.  Since we have assumed $\mathfrak m_w \mathcal M$ is closed, it follows that the dimension of the joint kernel $\mathbb K(w)$ equals the dimension of the quotient module $\mathcal M/(\mathfrak m_w \mathcal M)$. However it is not clear if one may impose natural hypothesis on $\mathcal M$ to ensure
$$
\dim \mathcal M/(\mathfrak m_w \mathcal M) = \dim \mathbb K(w) 
= \dim (\mathcal S^{\mathcal M})_w/(\mathfrak m(\mathcal O_w) 
(\mathcal S^{\mathcal M})_w),
$$
where $\mathfrak m(\mathcal O_w)$ is the maximal ideal in $\mathcal O_w$, as well.

More generally, suppose $p_1, \ldots , p_n$ generate $\mathcal M$. Then $\dim \mathbb K(w) \leq n$ for all $w \in \Omega$.  If the common zero set $V$ of these is $\{0\}$ then $(p_1)_0, \ldots ,(p_n)_0$  need not be a minimal set of generators for $(\mathcal S^{\mathcal M})_0$. However, we show that they do if we assume $p_1, \ldots ,p_n$ are homogeneous of degree $k$, say. Further more, basis for  $\mathbb K(0)$ is the set of vectors:
$$\big \{p_1(\bar{\boldsymbol\partial})\}K(\cdot,w)_{|\,w=0}, \ldots ,  p_n(\bar{\boldsymbol\partial})\}K(\cdot,w)_{|\,w=0}\big \},$$
where $\bar{\boldsymbol\partial} =(\bar{\partial}_1, \ldots , \bar{\partial}_m)$.

Going back to the example of $H_0^2(\mathbb D^2)$, we see that it has
two generators, namely $z_1$ and $z_2$. Clearly, the joint kernel 
$$\mathbb K(w):= \ker D_{(M_1^*-\bar{w}_1, M_2^*-\bar{w}_2)}$$ 
at $w=(w_1,w_2)$ is spanned by
$$\{z_1 \otimes_{\mathcal A(\mathbb D^2)} 1_w, z_2 \otimes_{\mathcal A(\mathbb D^2)} 1_w \} = \{w_1 K_{H_0^2(\mathbb D^2)}(z,w),w_2 K_{H_0^2(\mathbb D^2)}(z,w) \}$$
which consists of two vectors that are linearly dependent except when $w=(0,0)$. We also easily verify that
$$
\big (\mathcal S^{H^2_0(\mathbb D^2)}\big )_w \cong \begin{cases} \mathcal O_w& w \not = (0,0)\\
\mathfrak m\,(\mathcal O_0) &w=(0,0)
                             .\end{cases}
$$
Since the reproducing kernel
\begin{eqnarray*}
K_{H_0^2(\mathbb D^2)}(z,w) &=& K_{H^2(\mathbb D^2)}(z,w) -1
= \frac{z_1\bar{w}_1 +z_2\bar{w}_2 - z_1 z_2\bar{w}_1 \bar{w}_2}{(1- z_1\bar{w}_1)(1-z_2\bar{w}_2)}, 
\end{eqnarray*}
we find there are several choices for $K^{(1)}_w$ and $K^{(2)}_w$, $w\in U$. However, all of these choices disappear if we set $\bar{w}_1 \theta_1 = \bar{w}_2$ for $w_1 \not = 0$, and take the limit:
$$
\lim_{(w_1,w_2) \to 0}  \frac{K_{H_0^2(\mathbb D^2)}(z,w)}{\bar{w}_1} =K^{(1)}_0(z) + \theta_1 K^{(2)}_0(z) = z_1 + \theta_1 z_2
$$
because $K^{(1)}_0$ and $K^{(2)}_0$ are uniquely deteremined by Theorem 1. Similarly, for $\bar{w}_2 \theta_2 = \bar{w}_1$ for $w_2 \not = 0$, we have$$
\lim_{(w_1,w_2) \to 0}  \frac{K_{H_0^2(\mathbb D^2)}(z,w)}{\bar{w}_2} =K^{(2)}_0(z) + \theta_2 K^{(1)}_0(z)= z_2 + \theta_2 z_1.
$$
Thus we have a Hermitian line bundle on the complex projective space $\mathbb P^1$ given by the frame $\theta_1 \mapsto z_1 + \theta_1 z_2$ and
$\theta_2 \mapsto z_2 + \theta_2 z_1$. The curvature of this line bundle  is then an invariant for the Hilbert module  $H_0^2(\mathbb D^2)$ as shown in \cite{misra-DMV}. This curvature is easily calculated and is given by the formula $\mathcal K(\theta) = (1+|\theta|^2)^{-2}$.  The decomposition theorem yields similar results in many other examples.

Let $\mathcal I$ be an ideal in the polynomial ring $\mathbb C[\mathbf{z}].$  The characteristic space of an ideal $\mathcal I$ in $\mathbb{C}[\mathbf{z}]$ 
at the point $w$ is the vector space
$$\mathbb V_w(\mathcal I):= \{q\in\mathbb{C}[\mathbf{z}] : q(D)p|_w=0,\, p\in\mathcal I\}.$$
The envolope $\mathcal I^e_w$ of the ideal $\mathcal I$ is 
$$\{p\in\mathbb{C}[\mathbf{z}] : q(D)p|_w=0,\,q\in\mathbb V_w(\mathcal I)\}.$$
If the zero set of the ideal $\mathcal I$ is $\{w\}$ then $\mathcal I^e_w = \mathbb V_w(\mathcal I).$

This describes an ideal by prescribing conditions on derivatives. We stretch this a little more. Let $\tilde{\mathbb V}_w(\mathcal I)$ be the auxiliary space 
$\mathbb V_w(\mathfrak m_w \mathcal I).$  We have 
$$\dim (\cap \mbox{\rm Ker} (M_j - w_j)^*) = \dim \tilde{\mathbb V}_w(\mathcal I) /\mathbb V_w(\mathcal I).$$
Actually, we have something much more substantial.
\begin{lemma}\label{nice}
Fix $w_0\in\Omega$ and polynomials $q_1,\ldots,q_t.$ Let $\mathcal I$ be a polynomial ideal and 
$K$ be the reproducing kernel corresponding the Hilbert module $[\mathcal I]$, which is assumed to be 
in $\mathfrak B_1(\Omega).$ Then the vectors
$$ q_1(\bar D)K(\cdot, w)|_{w=w_0},\ldots, q_t(\bar D)K(\cdot, w)|_{w=w_0} $$
form a basis of the joint kernel $\cap_{j=1}^m\ker (M_j - w_{0j})^*$
if and only if the classes $[q_1^*],\ldots,[q_t^*]$ form a basis of 
$\tilde{\mathbb V}_{w_0}(\mathcal I)/\mathbb V_{w_0}(\mathcal I).$
\end{lemma}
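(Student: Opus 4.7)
The plan is to exhibit an explicit linear isomorphism between the quotient $\tilde{\mathbb V}_{w_0}(\mathcal I)/\mathbb V_{w_0}(\mathcal I)$ and the joint kernel $\cap_{j=1}^m\ker(M_j-w_{0j})^*$, given by $[q^*]\mapsto q(\bar D)K(\cdot,w)|_{w=w_0}$. Once this is in place, the equivalence claimed in the lemma is just the statement that a set of vectors forms a basis of a finite-dimensional space iff its images under an isomorphism do.

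The starting point is Remark \ref{bdder}(ii), which gives, for any $f\in (\mathcal H,K)=[\mathcal I]$ and any polynomial $q=\sum a_\alpha z^\alpha$,
\begin{equation*}
\bigl\langle f,\,q(\bar D)K(\cdot,w)\bigr\rangle_{w=w_0}
 =\sum_\alpha \overline{a_\alpha}\,(\partial^\alpha f)(w_0)
 =\bigl(q^*(D)f\bigr)(w_0),
\end{equation*}
where $q^*(z)=\sum\overline{a_\alpha}z^\alpha$. First I would use this identity to determine when $q(\bar D)K(\cdot,w_0)$ vanishes: since $\mathcal I$ is dense in $[\mathcal I]$ and $f\mapsto (q^*(D)f)(w_0)$ is a continuous linear functional on $[\mathcal I]$, one has $q(\bar D)K(\cdot,w_0)=0$ iff $(q^*(D)p)(w_0)=0$ for every $p\in\mathcal I$, i.e.\ iff $q^*\in\mathbb V_{w_0}(\mathcal I)$.

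Next I would characterize when $q(\bar D)K(\cdot,w_0)$ lies in the joint kernel. For each $j$ and each $f\in[\mathcal I]$,
\begin{equation*}
\bigl\langle (M_j-w_{0j})f,\,q(\bar D)K(\cdot,w_0)\bigr\rangle
=\bigl(q^*(D)\bigl((z_j-w_{0j})f\bigr)\bigr)(w_0).
\end{equation*}
Hence $q(\bar D)K(\cdot,w_0)\in\cap_j\ker(M_j-w_{0j})^*$ iff $q^*(D)$ annihilates $(z_j-w_{0j})p$ at $w_0$ for every $p\in\mathcal I$ and every $j$; by density and linearity this is exactly the condition $q^*\in\mathbb V_{w_0}(\mathfrak m_{w_0}\mathcal I)=\tilde{\mathbb V}_{w_0}(\mathcal I)$. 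Combining these two observations, the map $q^*\mapsto q(\bar D)K(\cdot,w_0)$ descends to a well-defined \emph{injective} linear map $\Psi:\tilde{\mathbb V}_{w_0}(\mathcal I)/\mathbb V_{w_0}(\mathcal I)\longrightarrow \cap_j\ker(M_j-w_{0j})^*$.

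Finally, the dimension identity recorded just before the statement,
\begin{equation*}
\dim\bigl(\cap_j\ker(M_j-w_{0j})^*\bigr)
=\dim \tilde{\mathbb V}_{w_0}(\mathcal I)/\mathbb V_{w_0}(\mathcal I),
\end{equation*}
forces the injection $\Psi$ to be an isomorphism of finite-dimensional vector spaces, after which the lemma is immediate: $[q_1^*],\dots,[q_t^*]$ is a basis of the domain iff its image $q_1(\bar D)K(\cdot,w_0),\dots,q_t(\bar D)K(\cdot,w_0)$ is a basis of the joint kernel. The main obstacle I anticipate is a careful justification of the density step—specifically, that the polynomial ideal $\mathcal I$ is dense in $[\mathcal I]$ in the reproducing kernel norm, and that the functional $f\mapsto (q^*(D)f)(w_0)$ is continuous (the latter is exactly Remark \ref{bdder}, applied via Cauchy--Schwarz). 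Tracking the passage between $q$ and $q^*$ under complex conjugation is the other place where an error could easily creep in, but it is bookkeeping rather than substance.
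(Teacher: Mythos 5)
Your argument is correct and is essentially the intended one: the paper states this lemma without proof (deferring to \cite{misra-SBisM}), but the ingredients it assembles around the statement --- Remark \ref{bdder}(ii), the density of $\mathcal I$ in $[\mathcal I]$, the definitions of $\mathbb V_{w_0}$ and $\tilde{\mathbb V}_{w_0}$, and the dimension identity displayed immediately before the lemma --- are exactly what you use, in the same order. The only cosmetic point is that your map $\Psi$ is conjugate-linear in $[q^*]$ rather than linear, which, as you anticipate, is harmless bookkeeping since conjugate-linear bijections still carry bases to bases.
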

However, it is not clear if we can choose the polynomials $\{q_1, \ldots , q_t\}$ to be a generating set for the ideal $\mathcal I$. None the less, the following theorem produces a new set $\{q_1, \ldots , q_v\}$ of generators for $\mathcal I$, which is more or less \emph{``canonical''}.  Indeed, it is uniquely determined modulo linear transformations.
\begin{theorem}(\cite[Proposition 2.10]{misra-SBisM})\label{hi}
Let $\mathcal I\subset\mathbb{C}[\mathbf{z}]$ be  a homogeneous ideal and  
$\{p_1,\ldots,p_v\}$  be a minimal set of generators for $\mathcal I$ consisting of 
homogeneous polynomials. Let $K$ be the reproducing kernel corresponding to the Hilbert module 
$[\mathcal I]$, which is assumed to be in $\mathfrak B_1(\Omega).$ Then there exists a set of 
generators $q_1,...,q_v$ for the ideal $\mathcal I$ such that the set 
$$\{q_i(\bar D)K(\cdot,w)|_{w=0}:\,1\leq i \leq v\}$$
is a basis for the joint kernel $\displaystyle\cap_{j=1}^m \ker M_j^*.$
\end{theorem}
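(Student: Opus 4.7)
The plan is to invoke Lemma~\ref{nice} with $w_0=0$ and $t=v$: the assertion then reduces to constructing polynomials $q_1,\dots,q_v$ that generate $\mathcal I$ and for which $[q_1^*],\dots,[q_v^*]$ is a basis of the quotient $\widetilde{\mathbb V}_0(\mathcal I)/\mathbb V_0(\mathcal I)$. I will produce the $q_i$ as a Fischer-orthogonal complement of $\mathfrak m_0\mathcal I$ inside $\mathcal I$.

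To that end, equip $\mathbb C[\boldsymbol z]$ with the Fischer inner product $\langle p,q\rangle_F:=\sum_\alpha\alpha!\,a_\alpha\overline{b_\alpha}$, where $p=\sum a_\alpha z^\alpha$ and $q=\sum b_\alpha z^\alpha$. A direct calculation gives $q(D)p|_0=\langle p,q^*\rangle_F$, so the definition of $\mathbb V_0$ translates to: $q\in\mathbb V_0(\mathcal I)$ if and only if $q^*\in\mathcal I^{\perp_F}$, and similarly $q\in\widetilde{\mathbb V}_0(\mathcal I)$ if and only if $q^*\in(\mathfrak m_0\mathcal I)^{\perp_F}$. Consequently the coefficient-conjugation $*$ induces a conjugate-linear bijection between $\widetilde{\mathbb V}_0/\mathbb V_0$ and $(\mathfrak m_0\mathcal I)^{\perp_F}/\mathcal I^{\perp_F}$, and the latter is the Fischer-dual of $\mathcal I/\mathfrak m_0\mathcal I$. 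Since $\mathcal I$ is a graded $\mathbb C[\boldsymbol z]$-module and $\{p_1,\dots,p_v\}$ is a minimal homogeneous generating set, graded Nakayama gives $\dim(\mathcal I/\mathfrak m_0\mathcal I)=v$, whence $\dim\bigl(\widetilde{\mathbb V}_0/\mathbb V_0\bigr)=v$ as well.

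For the construction itself, set $V:=\mathcal I\cap(\mathfrak m_0\mathcal I)^{\perp_F}$. Because $\mathcal I$ and $\mathfrak m_0\mathcal I$ are both graded and $\langle\cdot,\cdot\rangle_F$ respects the grading, the finite-dimensional orthogonal decomposition $\mathcal I_d=(\mathfrak m_0\mathcal I)_d\oplus V_d$ holds in each degree, so $V=\bigoplus_d V_d$ is a graded direct complement of $\mathfrak m_0\mathcal I$ in $\mathcal I$ of dimension $v$. Choose a homogeneous basis $q_1,\dots,q_v$ of $V$. Its image is a basis of $\mathcal I/\mathfrak m_0\mathcal I$, so by graded Nakayama the $q_i$ generate $\mathcal I$. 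By construction each $q_i\in(\mathfrak m_0\mathcal I)^{\perp_F}$, equivalently $q_i^*\in\widetilde{\mathbb V}_0$. To check linear independence of $[q_1^*],\dots,[q_v^*]$ in $\widetilde{\mathbb V}_0/\mathbb V_0$, suppose $\sum_i c_i q_i^*\in\mathbb V_0$; applying $*$ and the translation above, this becomes $\sum_i\overline{c_i}\,q_i\in\mathcal I^{\perp_F}$. But $\sum_i\overline{c_i}\,q_i$ also lies in $\mathcal I$, and $\mathcal I\cap\mathcal I^{\perp_F}=\{0\}$ by the positive definiteness of $\langle\cdot,\cdot\rangle_F$; since the $q_i$ are linearly independent, this forces $c_1=\cdots=c_v=0$. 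Thus $[q_1^*],\dots,[q_v^*]$ span a $v$-dimensional subspace of the $v$-dimensional quotient $\widetilde{\mathbb V}_0/\mathbb V_0$, hence form a basis, and Lemma~\ref{nice} yields the claim. The main substantive point is the Fischer-pairing identification, which translates the algebraically defined $\widetilde{\mathbb V}_0/\mathbb V_0$ into a concrete orthogonal complement inside $\mathcal I$; once this is in hand the graded Nakayama lemma and positive definiteness of $\langle\cdot,\cdot\rangle_F$ deliver, respectively, the generating property of the $q_i$ and the basis property of $\{[q_i^*]\}$.
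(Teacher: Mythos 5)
Your argument is correct, and it follows exactly the reduction that the surrounding text sets up: everything is funneled through Lemma~\ref{nice} at $w_0=0$, so the whole content of the theorem becomes the purely algebraic task of producing generators $q_1,\dots,q_v$ of $\mathcal I$ with $[q_1^*],\dots,[q_v^*]$ a basis of $\widetilde{\mathbb V}_0(\mathcal I)/\mathbb V_0(\mathcal I)$. The survey itself offers no proof of that algebraic step beyond the citation, so the Fischer-pairing construction is your own contribution, and it checks out: the identity $q(D)p|_0=\langle p,q^*\rangle_F$ is correct for $q^*=\sum\overline{b_\alpha}z^\alpha$, it converts $\mathbb V_0$ and $\widetilde{\mathbb V}_0$ into Fischer annihilators of $\mathcal I$ and $\mathfrak m_0\mathcal I$, and since $\mathcal I$, $\mathfrak m_0\mathcal I$ and the Fischer form are all graded, the degree-by-degree computation $\dim\bigl(\widetilde{\mathbb V}_0/\mathbb V_0\bigr)=\dim(\mathcal I/\mathfrak m_0\mathcal I)=v$ and the decomposition $\mathcal I=\mathfrak m_0\mathcal I\oplus V$ with $V=\mathcal I\cap(\mathfrak m_0\mathcal I)^{\perp_F}$ are both legitimate; graded Nakayama then gives the generating property and $\mathcal I\cap\mathcal I^{\perp_F}=\{0\}$ gives linear independence of the classes. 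Two things are worth noting in comparison with the source. First, your duality argument establishes $\dim\bigl(\widetilde{\mathbb V}_0/\mathbb V_0\bigr)=v$ purely algebraically, independently of the Hilbert-module identity $\dim\mathbb K(0)=\dim\widetilde{\mathbb V}_0/\mathbb V_0$ recorded just before Lemma~\ref{nice}; the cited reference obtains the count by combining that identity with an analysis of the joint kernel, so your route is more self-contained on this point. Second, the reference arrives at the $q_i$ by explicitly correcting the given minimal generators $p_i$ degree by degree (the correction is needed precisely because $p_i^*$ need not lie in $\widetilde{\mathbb V}_0$ when generators of lower degree exist), whereas your orthogonal projection onto $(\mathfrak m_0\mathcal I)^{\perp_F}$ performs all of these corrections at once; the two constructions produce the same class of generators, but yours packages the bookkeeping more cleanly at the cost of introducing the auxiliary inner product.
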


\subsection{The flag structure} \index{flag structure}
We have already discussed the class of operators $\mathcal FB_2(\Omega)$ which is contained in the Cowen-Douglas class $B_2(\Omega)$. A natural generalization is given below. The main point of introducing this smaller class is to show that a complete set of tractable unitary invariants
exist for this class. 

\begin{definition} We let $\mathcal{F}B_n(\Omega)$ be the set of all bounded linear operators $T$ defined on some complex separable Hilbert space $\mathcal H = \mathcal H_0 \oplus \cdots \oplus \mathcal H_{n-1},$ which are of the form
$$ T=\begin{pmatrix}
T_{0} & S_{0,1} & S_{0,2}&\cdots&S_{0,n-1}\\
0 &T_{1}&S_{1,2}&\cdots&S_{1,n-1} \\
\vdots&\ddots &\ddots&\ddots&\vdots\\
0&\cdots&0&T_{n-2}&S_{n-2,n-1}\\
0&\cdots&\cdots&0&T_{n-1}\\
\end{pmatrix},$$
where the operator $T_i:\mathcal H_i \to \mathcal H_i,$ defined on the complex separable Hilbert space $\mathcal H_i,$ $0\leq i \leq n-1,$ is assumed to be in $B_1(\Omega)$ and  $S_{i,i+1}:\mathcal H_{i+1} \to \mathcal H_i,$ is assumed to be a non-zero intertwining operator, namely, $T_iS_{i,i+1}=S_{i,i+1}T_{i+1},$  $0\leq i \leq n-2.$
\end{definition}
The set of operators in $\mathcal FB_n(\Omega)$, as before, also  belong to the Cowen-Douglas class $B_n(\Omega).$ To show this starting with the base case of $n=2,$ using induction, we don't even need  the intertwining condition. A rigidity theorem for this class of operators was proved in \cite[Theorem 3.5]{misra-JJKM}, which is reproduced below.

\begin{theorem}[Rigidity]\label{hdu}\index{rigidity}
Any two operators $T$ and $\tilde{T}$ in  $\mathcal{F}B_n(\Omega)$
are unitarily equivalent if and
only if there exists unitary operators $U_i$,  $0\leq i\leq n-1$,
such that $U_iT_i=\tilde{T}_iU_i$ and
$U_iS_{i,j}=\tilde{S}_{i,j}U_j,$  $i<j.$
\end{theorem}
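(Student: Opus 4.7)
The direction asserting that the existence of the unitaries $U_i$ implies unitary equivalence of $T$ and $\tilde T$ is immediate: define $U := \bigoplus_{i=0}^{n-1} U_i$, which is unitary, and a direct block computation shows that the stated intertwining conditions $U_i T_i = \tilde T_i U_i$ and $U_i S_{i,j} = \tilde S_{i,j} U_j$ (for $i < j$) are exactly equivalent to $UT = \tilde T U$.

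For the converse, my plan is to show that any unitary $U: \mathcal H \to \tilde{\mathcal H}$ satisfying $UT = \tilde T U$ is necessarily block-diagonal with respect to the given flag decompositions $\mathcal H = \bigoplus_i \mathcal H_i$ and $\tilde{\mathcal H} = \bigoplus_i \tilde{\mathcal H}_i$. The strategy is to exploit the flag of holomorphic sub-bundles $0 \subset F_0^T \subset F_1^T \subset \cdots \subset F_{n-1}^T = E_T$, where $F_k^T(w) := \ker(T - w) \cap (\mathcal H_0 \oplus \cdots \oplus \mathcal H_k)$ has rank $k+1$ and corresponds to the $T$-invariant subspace $\mathcal H_0 \oplus \cdots \oplus \mathcal H_k$, on which $T$ restricts to an element of $\mathcal FB_{k+1}(\Omega)$. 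Construct the analogous flag $F_\bullet^{\tilde T}$ in $E_{\tilde T}$. The main claim is that $U$ preserves the flag, i.e., $UF_k^T = F_k^{\tilde T}$ for each $k$. Using the canonical holomorphic frame $\gamma_0^T, \ldots, \gamma_{n-1}^T$ adapted to the flag -- with $\gamma_k^T(w)$ supported in $\mathcal H_0 \oplus \cdots \oplus \mathcal H_k$, its $\mathcal H_k$-component a prescribed frame of $E_{T_k}$, and its lower components determined by the eigenvalue equation together with orthogonality conditions generalizing the relation $\gamma_0 \perp \partial\gamma_0 - \gamma_1$ recorded before Theorem \ref{maint} -- I would argue inductively on $k$ that $U \gamma_k^T$ lies in $F_k^{\tilde T}$ and in fact equals $\lambda_k(w)\gamma_k^{\tilde T}$ modulo $F_{k-1}^{\tilde T}$, for a non-vanishing holomorphic scalar $\lambda_k$. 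Once the flag-preservation $UF_k^T = F_k^{\tilde T}$ is established, taking closed linear spans over $w \in \Omega$ gives $U(\mathcal H_0 \oplus \cdots \oplus \mathcal H_k) = \tilde{\mathcal H}_0 \oplus \cdots \oplus \tilde{\mathcal H}_k$; successive orthogonal complements then force $U(\mathcal H_k) = \tilde{\mathcal H}_k$. Setting $U_i := U|_{\mathcal H_i}$ and reading off the block entries of $UT = \tilde T U$ yields the required intertwining conditions.

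The principal obstacle is the base case of this induction, namely showing that the line sub-bundle $F_0^T$ is canonically determined by $T$ alone (and analogously that each successive sub-bundle is determined by the flag structure). The essential input is that the flag structure built into the definition of $\mathcal FB_n(\Omega)$ -- combined with the non-vanishing assumption on each $S_{i,i+1}$ and the irreducibility of the diagonal $B_1(\Omega)$-pieces -- forces the adapted frame to be unique up to multiplication by non-vanishing holomorphic scalars, so that any Hermitian bundle isomorphism $E_T \to E_{\tilde T}$ (which is what a unitary intertwiner $U$ induces) must respect the adapted frames and hence the flags. This uniqueness is the core rigidity phenomenon already visible in Theorem \ref{maint} at the level $n = 2$, and its propagation to arbitrary $n$ through the successive sub-bundles $F_k^T$ is the heart of the argument.
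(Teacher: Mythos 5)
Your forward direction and your overall reduction are fine: once one knows that a unitary intertwiner $U$ carries $\mathcal H_0\oplus\cdots\oplus\mathcal H_k$ onto $\tilde{\mathcal H}_0\oplus\cdots\oplus\tilde{\mathcal H}_k$ for every $k$, the passage to $U\mathcal H_k=\tilde{\mathcal H}_k$ by successive orthogonal complements and the extraction of $U_iT_i=\tilde T_iU_i$ and $U_iS_{i,j}=\tilde S_{i,j}U_j$ from the block entries of $UT=\tilde TU$ are routine; and the density of $\operatorname{span}\{F_k^T(w):w\in\Omega\}$ in $\mathcal H_0\oplus\cdots\oplus\mathcal H_k$ does follow from the fact that the restriction of $T$ to this invariant subspace lies in $B_{k+1}(\Omega)$. (Note the present paper gives no proof of this theorem; it is quoted from \cite[Theorem 3.5]{misra-JJKM}, so the comparison below is with the argument there.)

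The genuine gap is exactly at the step you yourself flag as the ``principal obstacle'': you never prove that $U$ preserves the flag, i.e.\ that $UF_k^T(w)=F_k^{\tilde T}(w)$. The sub-bundles $F_k^T(w)=\ker(T-w)\cap(\mathcal H_0\oplus\cdots\oplus\mathcal H_k)$ are defined through the \emph{given} decompositions, while a unitary intertwiner is only known to carry $\ker(T-w)$ isometrically onto $\ker(\tilde T-w)$; there is no a priori reason it should match up two unrelated flags sitting inside these kernels. Your justification --- that the adapted frame is unique up to non-vanishing holomorphic scalars, hence any Hermitian bundle isomorphism respects it --- is essentially a restatement of the rigidity to be proved, and the appeal to Theorem \ref{maint} is circular, since that classification is a \emph{consequence} of the rigidity statement, not an input to it. What is actually required, and what the proof in \cite{misra-JJKM} supplies, is a concrete computation with the adapted frames: write $U\gamma_0=\alpha\,\tilde\gamma_0+\beta\,\tilde\gamma_1+\cdots$, use that $U$ commutes with $\tfrac{\partial}{\partial w}$ on holomorphic sections and preserves all inner products, and exploit the defining metric relation $\tfrac{\partial}{\partial w}\|\gamma_0(w)\|^2=\langle\gamma_1(w),\gamma_0(w)\rangle$ (equivalently, the orthogonality of $\gamma_0$ and $\partial\gamma_0-\gamma_1$) together with the injectivity and dense-range properties of the non-zero intertwiners $S_{i,i+1}$ between the irreducible $B_1(\Omega)$ operators $T_i$, to force the coefficients off the diagonal to vanish. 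Until that computation is carried out --- or an intrinsic, decomposition-free characterization of the sub-bundles $F_k^T$ is produced --- your induction has no base case and the argument does not close.
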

From the rigidity, it is easy to extract unitary invariants for the operators in the class $\mathcal FB_n(\Omega)$, see \cite[Theorem 3.6]{misra-JJKM}. These invariants come from the intrinsic complex geometry inherent in the definition of the operators in the class $\mathcal FB_n(\Omega)$. Also, all the operators in this class are irreducible.  

\subsubsection{An application to module tensor products} 
\index{tensor product}\index{module tensor product}
In the early development of the quotient and sub-modules, it was expected that the localization using modules of rank $> 1$ might provide new insight. This was discussed in some detail in the paper \cite{misra-DMV} but the outcome was not conclusive.  The results from \cite[Section 4]{misra-JJKM} completes the study initiated in \cite{misra-DMV}.  Let $f$ be a polynomial in one variable. Set
$$\mathcal{J}_{\boldsymbol \mu}(f)(z)=\left(
                         \begin{array}{cccc}
                           \mu_{1,1}f(z) & 0 & \cdots & 0 \\
                           \mu_{2,1}\tfrac{\partial }{\partial z}f(z) & \mu_{2,2}f(z) & \cdots & 0 \\
                           \vdots & \vdots & \ddots & \vdots \\
                           \mu_{k,1}\tfrac{\partial^{k-1}}{\partial z^{k-1} }f(z) & \mu_{k-1,1}\tfrac{\partial^{k-2}}{\partial z^{k-2}}f (z)& \cdots &\mu_{k,k} f(z) \\
                         \end{array}
                       \right)
$$ 
where
$\boldsymbol{\mu} = \left( \mu_{i,j} \right)$ is a lower triangular matrix of complex numbers with $\mu_{i,i} =1,\,1\leq i \leq k.$
Lemma 4.1 of \cite{misra-JJKM} singles out those $\boldsymbol \mu$ which ensure $\mathcal{J}_{\boldsymbol \mu}(f g) = \mathcal{J}_{\boldsymbol \mu}(f) \mathcal{J}_{\boldsymbol \mu}(g)$. Fix one such $\boldsymbol \mu$. For $\mathbf x$ in $\mathbb{C}^k,$ and $f$ in the polynomial ring $\mathbb C[\boldsymbol z],$ define the module multiplication on $\mathbb C^k$  as follows: $$f\cdot \mathbf x=\mathcal J_{\boldsymbol \mu}(f)(w)\mathbf x.$$ The finite dimensional Hilbert space $\mathbb C^k$ equipped with this module multiplication is denoted by $\mathbb C^k_w[\boldsymbol \mu]$.

Let $\mathcal M$ be a Hilbert module possessing a reproducing kernel. We construct a module of $k$ - jets by setting
$$J\mathcal{M}=\Big \{\sum_{l=0}^{k-1}\tfrac{\partial^i}{\partial
z^i}{h}\otimes {\varepsilon}_{i+1}: h\in\mathcal{M}\Big \},$$
where $\varepsilon_{i+1}, \, 0 \leq i \leq k-1,$ are the standard basis vectors in $\mathbb C^k.$ Recall that there is a natural module action on $J\mathcal M,$ namely,
$$\Big (f, \sum_{l=0}^{k-1}\tfrac{\partial^i}{\partial z^i}{h} \Big )\mapsto
\mathcal{J}(f)\Big (\sum_{l=0}^{k-1}\tfrac{\partial^i}{\partial
z^i}{h}\otimes {\varepsilon}_{i+1}\Big ),\, f\in \mathbb C[z],\,
h\in\mathcal{M},$$
where
\begin{equation} \mathcal{J}(f)_{i,j} =  \begin{cases}  \binom{i-1}{j-1}
\partial^{i-j}f &\mbox{if} \,\, i\geq j,\\
0 & \mbox{otherwise}.
\end{cases}
\end{equation}
The module tensor product $J\mathcal{M}\otimes_{\mathbb C[z]}\mathbb{C}^k_w[\boldsymbol \mu]$ is easily identified with the quotient module $\mathcal{N}^{\bot},$ where $\mathcal N\subseteq \mathcal M$ is the sub-module spanned by the vectors
$$ \big\{ \sum_{l=1}^{k}(J_f\cdot{\bf
h}_l\otimes \varepsilon_l- {\bf h}_l\otimes
({\mathcal{J}_{\boldsymbol \mu}}(f))(w)\cdot\varepsilon_l): {\bf h}_l\in
J\mathcal{M},\varepsilon_l
\in \mathbb{C}^k, f\in \mathbb [z] \big\}.$$
\begin{theorem}
The Hilbert modules corresponding to the localizations $J\mathcal M\otimes_{\mathbb C[z]} \mathbb C^k_w[\boldsymbol \mu_i],$ $i=1,2,$ are in $\mathcal{F}B_k(\Omega)$ and they are isomorphic if and only if $\boldsymbol \mu_1 = \boldsymbol \mu_2.$
\end{theorem}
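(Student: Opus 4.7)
The plan is to combine the explicit realization of the localization as $\mathcal{N}^{\perp}\subseteq J\mathcal{M}$ given just before the statement with the rigidity theorem (Theorem \ref{hdu}) for the class $\mathcal{F}B_k(\Omega)$. For the first assertion, I would compute the matrix of the adjoint of the module action in a basis of $\mathcal{N}^{\perp}$ adapted to the natural $k$-block decomposition inherited from the jet construction. The key observation is that both $\mathcal{J}(f)$ and $\mathcal{J}_{\boldsymbol{\mu}}(f)(w)$ are lower triangular with the same scalar diagonal, so the equivalence relations defining $\mathcal{N}$ force a strictly triangular block form on the quotient. Each diagonal block is unitarily equivalent to the multiplication operator on $\mathcal{M}$, whose adjoint is in $B_1(\Omega)$; the off-diagonal entries $S_{i,i+1}$ are nonzero intertwiners determined by the off-diagonal coefficients of $\boldsymbol{\mu}$, and the required relation $T_i S_{i,i+1}=S_{i,i+1}T_{i+1}$ follows from the multiplicative property $\mathcal{J}_{\boldsymbol{\mu}}(fg)=\mathcal{J}_{\boldsymbol{\mu}}(f)\mathcal{J}_{\boldsymbol{\mu}}(g)$. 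This places the localization in $\mathcal{F}B_k(\Omega)$.

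For the isomorphism claim, the implication $\boldsymbol{\mu}_1=\boldsymbol{\mu}_2\Rightarrow$ isomorphic modules is immediate from the construction. Conversely, suppose the two Hilbert modules are isomorphic. Apply Theorem \ref{hdu} to obtain unitaries $U_0,\ldots,U_{k-1}$ intertwining the diagonal blocks and the off-diagonal entries of the two operator matrices. Because the diagonal blocks coming from $\boldsymbol{\mu}_1$ and $\boldsymbol{\mu}_2$ agree (the construction of $T_i$ uses only $\mathcal{M}$ and the diagonal entries $\mu_{i,i}=1$), each $U_i$ commutes with an irreducible operator in $B_1(\Omega)$, and so $U_i=c_i I$ for some unimodular scalar $c_i$. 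The remaining rigidity conditions $U_i S^{(1)}_{i,j}=S^{(2)}_{i,j} U_j$ reduce to scalar identities $S^{(2)}_{i,j}=(c_i/c_j)S^{(1)}_{i,j}$. Tracing through the explicit dependence $S_{i,j}=S_{i,j}(\boldsymbol{\mu})$ built from the off-diagonal entries of $\mathcal{J}_{\boldsymbol{\mu}}$, and exploiting the normalization $\mu_{i,i}=1$ together with the compatibility relations among the $S_{i,j}$'s forced by the algebra identity for $\mathcal{J}_{\boldsymbol{\mu}}$, one deduces successively that the ratios $c_i/c_j$ must equal $1$ along the super-diagonal. This propagates to give $\mu^{(1)}_{i,j}=\mu^{(2)}_{i,j}$ for all $i>j$, hence $\boldsymbol{\mu}_1=\boldsymbol{\mu}_2$.

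The main obstacle lies in the last step: the scalars $c_i$ are arbitrary phases a priori, and one must verify that the combinatorial structure of the family $\{S_{i,j}\}$ coming from $\mathcal{J}_{\boldsymbol{\mu}}$ is rigid enough to prevent any hidden scalar renormalization from absorbing a genuine difference between $\boldsymbol{\mu}_1$ and $\boldsymbol{\mu}_2$. This is precisely where the normalization $\mu_{i,i}=1$ and the admissibility condition of Lemma 4.1 of \cite{misra-JJKM} are essential: they pin down the coefficients of $\mathcal{J}_{\boldsymbol{\mu}}$ so that the map $\boldsymbol{\mu}\mapsto (S_{0,1},S_{1,2},\ldots,S_{k-2,k-1})$ becomes injective modulo the phase freedom supplied by the $c_i$'s, thereby identifying $\boldsymbol{\mu}$ as a complete unitary invariant for this family of operators inside $\mathcal{F}B_k(\Omega)$.
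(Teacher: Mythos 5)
The paper states this theorem without proof, deferring to \cite[Section 4]{misra-JJKM}, so there is no in-text argument to compare against. Judged on its own terms, your outline has the right architecture: realize the localization as $\mathcal N^{\perp}$, exhibit the upper-triangular block form, invoke the rigidity Theorem \ref{hdu}, and use the fact that a unitary commuting with an irreducible operator must be a scalar (its spectral projections would otherwise reduce $T_i$) to shrink each $U_i$ to a phase $c_i$. But the decisive step is missing, and you have flagged it yourself without resolving it. After rigidity you are left with $S^{(2)}_{i,j}=(c_i/c_j)\,S^{(1)}_{i,j}$ for \emph{arbitrary} unimodular $c_i$, and nothing you have written excludes the possibility that this phase action absorbs a genuine difference between $\boldsymbol\mu_1$ and $\boldsymbol\mu_2$. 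Indeed, if each $S_{i,i+1}$ were simply the corresponding subdiagonal entry of $\boldsymbol\mu$ times a fixed, $\boldsymbol\mu$-independent intertwiner, your argument would only yield equality of the moduli of those entries; and the relations $\mu_{i,l}\mu_{l,j}=\binom{i-j}{l-j}\mu_{i,j}$ forced by $\mathcal J_{\boldsymbol\mu}(fg)=\mathcal J_{\boldsymbol\mu}(f)\mathcal J_{\boldsymbol\mu}(g)$ are themselves invariant under exactly this kind of unimodular rescaling, so they supply no extra constraint. The theorem can only be true because the off-diagonal entries of the localized operator are \emph{not} pure multiples of a fixed operator: they carry a $\boldsymbol\mu$-independent summand coming from the derivative coupling already present in $\mathcal J(f)$, in addition to the $\boldsymbol\mu$-twisted part, and it is the coexistence of the two components that forces $c_i/c_j=1$ and then $\mu^{(1)}_{i,j}=\mu^{(2)}_{i,j}$. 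You must actually compute the $S_{i,j}$ from the definition of $\mathcal N$ to see this; ``one deduces successively'' presupposes the conclusion.

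A second, smaller gap concerns the first assertion, membership in $\mathcal F B_k(\Omega)$, which you assert rather than prove. You need to verify that the quotient really has the required structure: that each diagonal block is (unitarily equivalent to) an operator in $B_1(\Omega)$, that the superdiagonal entries are \emph{nonzero} intertwiners --- this is where the admissibility of $\boldsymbol\mu$ from Lemma 4.1 of \cite{misra-JJKM} and the nonvanishing of its subdiagonal entries enter --- and that $T_iS_{i,i+1}=S_{i,i+1}T_{i+1}$ holds; the multiplicativity of $\mathcal J_{\boldsymbol\mu}$ is the right reason, but the verification is a computation, not a remark. Note also that the two localizations live on different subspaces $\mathcal N_1^{\perp}$ and $\mathcal N_2^{\perp}$, so the diagonal blocks do not literally ``agree''; they are unitarily equivalent, and you should fix one intertwiner and absorb it before concluding $U_i=c_iI$.
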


{\bf Acknowledgement.} 
The author thanks Soumitra Ghara, Dinesh Kumar Keshari,  Md. Ramiz Reza and Subrata Shyam Roy  for going through a preliminary draft of this article carefully and pointing out several typographical errors. The author gratefully acknowledges the financial support 
from the Department of Science and Technology in the form of the~J~C~Bose National 
Fellowship and from the University Grants Commission, Centre for Advanced Study.

\providecommand{\bysame}{\leavevmode\hbox to3em{\hrulefill}\thinspace}
\providecommand{\MR}{\relax\ifhmode\unskip\space\fi MR }
\providecommand{\MRhref}[2]{%
  \href{http://www.ams.org/mathscinet-getitem?mr=#1}{#2}
}
\providecommand{\href}[2]{#2}

\end{document}